\newtheorem{theorem}{Theorem}[section]
\newtheorem{lemma}[theorem]{Lemma}
\newtheorem{proposition}{Proposition}[section]
\theoremstyle{definition}
\newtheorem{remark}{Remark}[section]
\numberwithin{equation}{section} \subjclass{Primary: 35K59, 35B45,
35Q92 ; Secondary: 92B05} \keywords{parabolic systems, dynamic
boundary conditions, uniform estimates, porous medium equation,
ecology}
 \email{cgal@fiu.edu}
\begin{document}
\title[Parabolic Systems]{\textbf{Sup-norm estimates for parabolic systems
with dynamic boundary conditions}}
\author[Ciprian G. Gal]{}
\maketitle

%% Enter the first author's name and address:
\centerline{\scshape Ciprian G. Gal } \medskip {\footnotesize \
%% please put the address of the first author
\centerline{Department of Mathematics, Florida International University} %
\centerline{Miami, FL, 33199, USA} }
%% Do not forget to end the {\footnotesize by the sign }

%\emph{Dedicated to Jerry A. Goldstein on the occasion of his 70th birthday.}

%\QTP{Body Math}

\begin{abstract}
\noindent {\footnotesize We consider parabolic systems with
nonlinear dynamic boundary conditions, for which we give a
rigorous derivation. Then, \ we give them several physical
interpretations which includes an interpretation for the
porous-medium equation, and for certain reaction-diffusion systems
that occur in mathematical biology and ecology. We devise several
strategies which imply (uniform) }$L^{p}${\footnotesize \ and
}$L^{\infty }${\footnotesize \ estimates on the solutions for the
initial value problems considered.}
\end{abstract}

\section{Introduction}

In this article, we consider the following system of quasilinear parabolic
equations%
\begin{equation}
\partial _{t}u_{i}-\Delta \left( A_{i}\left( u_{i}\right) \right)
+f_{i}\left( x,t,\overrightarrow{u}\right) =0\text{, in }\Omega \times
\left( 0,\infty \right) ,  \label{s1}
\end{equation}%
for $i=1,...,m$, where $\overrightarrow{u}=\left( u_{1},...,u_{m}\right) ,$ $%
\Omega $ is a bounded domain in $\mathbb{R}^{N},$ $N\geq 1$, with
sufficiently smooth boundary $\Gamma :=\partial \Omega $ (which is at least
of class $\mathcal{C}^{2}$), for some given functions $A_{i}$ and $f_{i}.$
Denote by $\mathbb{N}_{m}=\left\{ 1,...,m\right\} $ and consider two
mutually disjoint (possibly empty) subsets $I_{m},J_{m}\subseteq \mathbb{N}%
_{m}$ such that $I_{m}\cup J_{m}=\mathbb{N}_{m}$. Equation (\ref{s1}) is
subject to the following set of boundary conditions%
\begin{equation}
\partial _{\mathbf{n}}u_{i}+h_{i}\left( x,t,\overrightarrow{u}\right) =0,%
\text{ on }\Gamma \times \left( 0,\infty \right) ,\text{ \thinspace }i\in
I_{m}  \label{s2}
\end{equation}%
and%
\begin{equation}
\delta _{i}\partial _{t}u_{i}+\partial _{\mathbf{n}}\left( A_{i}\left(
u_{i}\right) \right) +g_{i}\left( x,t,\overrightarrow{u}\right) =0,\text{ on
}\Gamma \times \left( 0,\infty \right) ,\text{ \thinspace }i\in J_{m},
\label{s3}
\end{equation}%
for some given functions $g_{i}$ and $h_{i}$. Here $\delta _{i}>0$ for $i\in
J_{m}$, and we may assume, without loss of generality, that $\delta _{i}=0,$
for $i\in I_{m}$. The boundary conditions in (\ref{s2})-(\ref{s3}) may be
also mixed, that is, the boundary $\Gamma $ may consists of two disjoint
open subsets $\Gamma _{1}$ and $\Gamma _{2}$ on which the boundary
conditions may be either of Dirichlet type or of the form (\ref{s2}) and (%
\ref{s3}), respectively. Finally, the model (\ref{s1})-(\ref{s3}) could be
also generalized be letting the reaction terms depend on advection, by
allowing the diffusion rates depend also on $x$ and $t$, or in other various
ways. As usual, we equip the system (\ref{s1})-(\ref{s3}) with the initial
conditions%
\begin{equation}
u_{i\mid t=0}=u_{i0}\text{ in }\Omega ,\text{ }u_{i\mid t=0}=v_{i0}\text{ on
}\Gamma ,\text{ }i\in \mathbb{N}_{m},  \label{s4}
\end{equation}%
where in general, we may have $u_{i0\mid \Gamma }\neq v_{i0},$ $i\in \mathbb{%
N}_{m}$ (i.e., if $u_{i0}$ is well-defined in the trace sense).

We aim to give some results which allow to deduce $L^{\infty }$-estimates
for solutions of (\ref{s1})-(\ref{s4}) assuming that some sort of energy
estimate is apriori known in $L^{p}$-norm for some finite $p$. The main tool
will be an iterative argument following a well-known Alikakos-Moser
technique combined with a suitable form of Gronwall's inequality, which can
then be used to prove that the $L^{p}$-$L^{\infty }$ smoothing property
holds for any solutions of the \emph{non-degenerate} parabolic system (\ref%
{s1})-(\ref{s4}) (e.g., at least when $a_{i}\left( u_{i}\right)
:=A_{i}^{^{\prime }}\left( u_{i}\right) $ satisfies (\ref{non}) below). In
order to deal with the full degenerate case (\ref{s1})-(\ref{s4}) (at least
in the case when $a_{i}\left( u_{i}\right) =\left\vert u_{i}\right\vert
^{p_{i}},$ $p_{i}>0$), we employ DeGiorgi's truncation method to prove the $%
L^{p}$-$L^{\infty }$ smoothing property. The precise statements of these
results can be found in Section 3, see Theorems \ref{linf} and \ref{linf3}.
A rigorous derivation and physical interpretation of the system (\ref{s1})-(%
\ref{s4}) shall be given below in Section 2.

Why is it important to establish apriori (possibly, uniform in time) $%
L^{\infty }$-estimates from some given $L^{p}$-estimate? To better give an
idea of our larger scope let us take a look at some history for problems of
the form (\ref{s1})-(\ref{s4}). Problems such as (\ref{s1})-(\ref{s4}) have
already been investigated in a number of papers \cite{Escher, Escher2, CJ,
Ma} assuming that diffusion rates $a_{i}\left( u_{i}\right)
:=A_{i}^{^{\prime }}\left( u_{i}\right) $ satisfy%
\begin{equation}
a_{i}\left( u_{i}\right) \geq d_{i}>0,\text{ for all }u_{i}\in \mathbb{R}%
\text{ and }i\in \mathbb{N}_{m}.  \label{non}
\end{equation}%
For instance, Constantin and Escher \cite{Escher, Escher2} show that unique
(classical) maximal solutions exist in some Bessel potential spaces under
suitable assumptions on the nonlinearities $f_{i},$ $g_{i}$ and $h_{i}$.
Such results also enable the authors to investigate other qualitative
properties concerning global existence and blow-up phenomena (see, also \cite%
{CJ}). These results are also improved by Meyries \cite{Ma}, still in the
non-degenerate case (\ref{non}), by assuming more general boundary
conditions (by also incorporating surface diffusion in (\ref{s3})),\ and by
requiring that the functions $f_{i}\left( \overrightarrow{u}\right) ,$ $%
h_{i}\left( \overrightarrow{u}\right) ,$ $g_{i}\left( \overrightarrow{u}%
\right) $ are dissipative in a certain sense. However, none of these
contributions deal with the degenerate case for equation (\ref{s1}), that
is, when $a_{i}\left( u_{i}\right) $ is allowed to have a polynomial
degeneracy at zero for some (if not all) $i\in \mathbb{N}_{m}$; for
instance, one can take%
\begin{equation}
a_{i}\left( u_{i}\right) =\left\vert u_{i}\right\vert ^{p_{i}},\text{ }%
p_{i}>0.  \label{ex}
\end{equation}%
Moreover, it is well-known in the scalar case $m=1$, that when at least one
of the source terms, the bulk nonlinear term $f_{1}$ or the boundary term $%
g_{1}$ is present in (\ref{s1})-(\ref{s2}), conditions can be derived on
their growth rates which imply either the global existence of solutions or
blow-up in finite time \cite{FQ}. Namely in the non-degenerate case, for $%
\lambda ,\mu \in \left\{ 0,\pm 1\right\} $ with $\max \left\{ \lambda ,\mu
\right\} =1$, $f_{1}\left( s\right) :=-\lambda \left\vert s\right\vert
^{r_{1}-1}s$ and $g_{1}\left( s\right) :=-\mu \left\vert s\right\vert
^{r_{2}-1}s$, solutions of%
\begin{equation}
\partial _{t}u-\nu \Delta u+f_{1}\left( u\right) =h_{1}\left( x\right) ,%
\text{ in }\Omega \times (0,+\infty ),  \label{1.11bb}
\end{equation}%
subject to the dynamic condition%
\begin{equation}
\partial _{t}u+\nu b\partial _{\mathbf{n}}u+g_{1}\left( u\right)
=h_{2}\left( x\right) ,\text{ on }\Gamma \times \left( 0,\infty \right) ,
\label{1.12bb}
\end{equation}%
are globally well-defined, for every given (sufficiently smooth) initial
data (\ref{s4})$,$\ if $r_{1}r_{2}>1$ and $\lambda r_{1}+\mu r_{2}>0$.
Furthermore, \cite{FQ} shows that if we further restrict the growths of $%
r_{1}$,$r_{2}$ so that $r_{1}<\left( N+2\right) /\left( N-2\right) $ and $%
r_{2}\leq N/\left( N-2\right) $, then the global solutions are also bounded.
These restrictions can be eventually removed and more general conditions on $%
f_{1},g_{1}$ can be deduced (see, e.g., \cite{G0}). On the other hand, if $%
\lambda =0$, $\mu =1,$ then some solutions blowup in finite time with blowup
occurring in the $L^{\infty }$-norm at a rate $\left( t-T_{\ast }\right)
^{-\left( r_{2}-1\right) },$ for some additional conditions on $u_{0}$ and $%
r_{2}$. In the same way, when $\mu =0$ and $\lambda =1$, then some solutions
blowup in finite time with a blowup rate which depends on $r_{1}$ and $u_{0}$
(see \cite{Be2}). In the case when both $\mu \in \mathbb{R}$ and $\lambda >0$
are nonzero, blowup may still occur for superlinear growth of $f_{1}$ and
any growth of $g$ (see \cite{G0}). The occurrence of blow up phenomena is
closely related to the blowup problem for the ordinary differential equation
\begin{equation*}
u_{t}+H\left( u\right) =0,
\end{equation*}%
where either $H=f_{1}$ or $H=g_{1}$. More precisely, it is easy to see that
solutions of the ODE are spatially homogeneous solutions of either equation (%
\ref{1.11bb}) or (\ref{1.12bb}), and so if these solutions blowup in finite
time so do the solutions of (\ref{1.11bb})-(\ref{1.12bb}). It is worth
mentioning that in \cite{CJ} a criterion for the global existence of a
(classical) maximal solution (on some interval $[0,t_{+})$)\ to (\ref{1.11bb}%
)-(\ref{1.12bb}) is established using a variation of parameter formula. In
particular, it is shown that if $t_{+}<\infty $ then necessarily we must have%
\begin{equation*}
\underset{t\rightarrow t_{+}}{\lim \sup }\left\Vert u\left( t\right)
\right\Vert _{L^{\infty }\left( \Omega \right) }=\infty .
\end{equation*}%
Therefore, it appears that in order to deduce global existence of classical
solutions to systems of the form (\ref{s1})-(\ref{s4}), (\ref{non}), it is
generally required that we should deduce bounds on the solutions in $%
L^{\infty }$-norm (see \cite{Ma} also).

Finally, the $L^{p}$-$L^{\infty }$ smoothing property also becomes an
essential tool in attractor theory where it can be used to establish the
existence of an absorbing set in $L^{\infty }$-norm if this property can be
deduced easily in $L^{p}$-norm for some finite $p$ (in many applications in
physics and mechanics, $p$ is equal to either $1$ or $2$). Recall that a
subset $\mathcal{B}\subset \mathcal{H}$, where $\mathcal{H}$ is a
topological space endowed with a given metric, is called \emph{absorbing} if
the orbits corresponding to bounded sets $\mathcal{V}$\ of initial data
enter into $\mathcal{B}$ after a certain time (which may depend on the set $%
\mathcal{V}$)\ and will stay there forever. Moreover, we note that in order
to study the long term behavior of the parabolic system (\ref{s1})-(\ref{s2}%
), if the absorbing property holds in $L^{\infty }$-norm, the growth rate of
the nonlinearities $f_{i}$, $g_{i}$ and $h_{i}$ with respect to $u_{i}$
becomes nonessential for further investigations of attractors. Indeed, the
absorbing property can be also established in higher-order $W^{s,p}$-norms
with relative ease provided that it is known in $L^{\infty }$-norm. For the
application of this property to attractor theory for parabolic equations of
the form (\ref{1.11bb}), (\ref{1.12bb}), see \cite{GalNS, G0}, where
explicit dimension estimates for the global attractor for (\ref{1.11bb})-(%
\ref{1.12bb}) are obtained.

The main goal of this paper is to deduce sufficiently general conditions on
the diffusions and sources in (\ref{s1})-(\ref{s3}), which would prevent
blowup of any solution in the $L^{\infty }$-norm, and show that the
parabolic system under consideration is dissipative in a suitable sense. We
outline the plan of the paper, as follows. In Section 2, we give the full
derivation of systems of the form (\ref{s1})-(\ref{s3}), and give physical
interpretations to the dynamic boundary condition (\ref{s2}) for the
porous-medium equation, and some models in ecology. In Section 3, after we
introduce some notations and preliminary facts, we give the statements of
our main results and some further applications. Finally, in Section 4 we
provide the full proofs of these results.

\section{Derivation and interpretation}

Let $\Gamma \subset \mathbb{R}^{N-1}$\ consists of two disjoint open subsets
$\Gamma _{1}$\ and $\Gamma _{2}$, each $\overline{\Gamma }_{i}\backslash
\Gamma _{i}$\ is a $S$-null subset of $\Gamma $\ and $\Gamma =\overline{%
\Gamma }_{1}\cup \overline{\Gamma }_{2}$\ with $\Gamma _{1}\subseteq \Gamma $%
. We shall only give the derivation in the case of the scalar equation%
\begin{equation}
\partial _{t}u-\text{div}\left( a\left( u\right) \nabla u\right) +f\left(
u\right) =h_{1}\left( x\right) ,  \label{PM1}
\end{equation}%
equipped with (nonlinear)\ dynamic boundary conditions%
\begin{equation}
\partial _{t}u+a\left( u\right) \nabla u\cdot \mathbf{n}+g\left( u\right)
=h_{2}\left( x\right) ,  \label{PM2}
\end{equation}%
on $\Gamma _{1},$ and Dirichlet boundary conditions%
\begin{equation}
u_{\mid \Gamma _{2}}=0.  \label{PM3}
\end{equation}%
We can easily extend our arguments to systems as well (see below). Equations
(\ref{PM1})-(\ref{PM3}) are also subject to the initial conditions%
\begin{equation}
u_{\mid t=0}=u_{0}\text{ in }\Omega ,\quad u_{\mid t=0}=v_{0}\text{ on }%
\Gamma .  \label{PM4}
\end{equation}

Standard derivations of the porous medium equation always use the principle
\textquotedblleft amount of fluid in equals amount of fluid
out\textquotedblright\ over a region $\Omega $, occupied by either a liquid
or gas, and is based on the fact that this fluid diffuses from locations of
higher to those of lower pressure. In the traditional approach, the porous
medium\ equation is assumed to hold in the region $\Omega $ and then the
boundary conditions are appended later. There are three standard boundary
conditions that specify the density on the boundary of $\Omega $; they are
Dirichlet and Neumann-Robin type of boundary conditions (see, e.g., \cite{Va}%
). Dynamic boundary conditions for porous medium equations seem to have
appeared before in different contexts \cite{BFP, FL, Su, ST}. For instance,
\cite{BFP} deals with the modelling of the rain water infiltration through
the soil above an aquifer in regimes where there is runoff at the ground
surface. In general, all rain water infiltrates into the soil, but if the
rainfall event is particularly intense, the maximum draining capacity of the
soil is exceeded. In this case, dynamic boundary conditions (see (\ref%
{gen_DBC}) below) are needed to describe the saturation of layers near the
ground surface (cf. also \cite{FL, Su}). Porous-medium like systems (\ref%
{PM1})-(\ref{PM3}) can also be found as part of (larger)\ coupled systems of
partial differential equations (such as, (\ref{s1})-(\ref{s4}))\ describing
the vertical movement of water and salt in a domain splitted in two parts: a
water reservoir and a saturated porous medium below it, in which a
continuous extraction of fresh water takes place (for instance, by the roots
of mangroves) \cite{GV}. Such problems are formulated in terms of equations
for the salt concentration and the water flow in the porous medium, with a
dynamic boundary condition which connects both subdomains. Finally, dynamic
boundary conditions similar to (\ref{PM2}) also appear in certain classes of
parabolic equations with boundary hysteresis (see, e.g., \cite[Section 4]{ST}
and the references therein). For some applications of dynamic boundary
conditions for physiologically structured populations with diffusion we
refer the reader, for instance, to \cite{FH}.

For all the phenomena of the kind discussed here, the method of introducing
dynamic boundary conditions seems ad hoc. It would be more natural if such
boundary conditions could be derived in the context of energy balance and
constitutive laws. Moreover, the usual derivation of the porous medium
equation with standard boundary conditions does not show how to model, for
instance, a water source, which is located on the boundary of $\Omega $. To
this end, we shall rethink the usual derivation of the porous medium
equation (\ref{PM1}), by making essential connections between the
differential equation (\ref{PM1}) and the boundary conditions (\ref{PM2})-(%
\ref{PM3}),\ and, thus, try to convince the reader that our new perspective
is more natural than the traditional way. Let $p\left( x,t\right) $ denote
the pressure of fluid at $x\in \Omega $ and time $t>0$. Consider the mass of
fluid in an element of volume $V$\ given by%
\begin{equation*}
\int_{V}\alpha \left( x\right) u\left( x,t\right) dx,
\end{equation*}%
where $\alpha \left( x\right) >0$ defines the porosity of medium at the
point $x\in \Omega $. Similarly, we define
\begin{equation*}
\int_{\Gamma }\beta \left( x\right) v\left( x,t\right) dS
\end{equation*}%
as the mass of fluid across the surface $\Gamma $, where $\beta \left(
x\right) $ is such that%
\begin{equation*}
\Gamma _{3}:=\left\{ x\in \Gamma _{1}:\beta \left( x\right) >0\right\}
\end{equation*}%
is a set of positive measure and $\Gamma _{3}\subseteq \Gamma _{1}$. In what
follows, we shall take $\Gamma _{2}=\varnothing $ for the sake of
exposition, so that $\Gamma _{1}\equiv \Gamma $. The flux $\mathbb{J}\left(
x,t\right) $, at which the fluid moves across a surface element $S$ with
normal $\mathbf{n}$, is given by
\begin{equation*}
\int_{S}\mathbb{J}\left( x,t\right) \cdot \mathbf{n}dS.
\end{equation*}%
Suppose now there is a source on the boundary $\Gamma $ to be represented by
a function%
\begin{equation*}
\Psi =\Psi (t,x,u,\nabla u).
\end{equation*}%
The amount of fluid leaving the region is still given by $\int_{\Gamma }%
\mathbb{J}\left( x,t\right) \cdot \mathbf{n}dS$, but the amount of fluid
leaving into the region must also take into account the action of the source
$\Psi $\ on $\Gamma $. It is worth pointing out that, in practice, when
rainfall only partially infiltrates the soil, the water will accumulate on
the ground surface $\Gamma _{1}$ as the surface layer becomes saturated;
hence, necessarily, $\Psi \neq 0$ on $\Gamma $. We use the measure space $%
\left( \overline{\Omega },d\mu \right) $ which we redefine as $\left( \Omega
,dx\right) \oplus \left( \Gamma ,dS\right) $. Then the conservation of fluid
in $\overline{\Omega }$\ takes the form%
\begin{align}
& \partial _{t}\left( \int_{\Omega }\alpha \left( x\right) \rho
dx+\int_{\Gamma }\beta \left( x\right) \eta dS\right) +\int_{\Gamma }\mathbb{%
J}\cdot \mathbf{n}dS  \label{deriv} \\
& =\int_{\Omega }\Xi dx+\int_{\Gamma }\Psi dS,  \notag
\end{align}%
where $\Xi =\Xi \left( x,t,u\right) $ denotes any volume source density
function. Notice that equation (\ref{deriv}) must also account for a term
like
\begin{equation*}
\int_{\Gamma }\beta \left( x\right) vdS,
\end{equation*}%
due to the presence of the source density $\Psi $ at $\Gamma $. Assuming
that $\mathbb{J}$ is sufficiently smooth and applying the divergence theorem
in (\ref{deriv}), we deduce%
\begin{align}
& \partial _{t}\left( \int_{\Omega }\alpha \left( x\right) udx+\int_{\Gamma
}\beta \left( x\right) udS\right) +\int_{\Omega }div\left( \mathbb{J}\right)
dx  \label{deriv2} \\
& =\int_{\Omega }\Xi dx+\int_{\Gamma }\Psi dS.  \notag
\end{align}%
Assuming that the density functions $u,v$ are also differentiable with
respect to $t>0$ and since (\ref{deriv2}) holds for any subdomain $\Omega
_{0}\subseteq \Omega $, the usual argument yields the following differential
equation%
\begin{equation}
\partial _{t}\left( \alpha \left( x\right) u\left( x,t\right) \right) =-%
\text{div}\left( \mathbb{J}\left( x,t\right) \right) +\Xi \left( x,t,u\left(
x,t\right) \right) \text{, }x\in \Omega ,\text{ }t>0.  \label{gen_PM}
\end{equation}%
Then, from (\ref{deriv2}) the boundary condition becomes%
\begin{equation*}
\int_{\Gamma }\left[ \partial _{t}\left( \beta \left( x\right) u\left(
x,t\right) \right) -\Psi \left( x,t,u\left( x,t\right) ,\nabla u\left(
x,t\right) \right) \right] dS=0,\text{ }t>0,
\end{equation*}%
which clearly holds if%
\begin{equation}
\partial _{t}\left( \beta \left( x\right) u\left( x,t\right) \right) -\Psi
\left( x,t,u\left( x,t\right) ,\nabla u\left( x,t\right) \right) =0\text{,
for }x\in \Gamma ,\text{ }t>0.  \label{gen_DBC}
\end{equation}%
Darcy's law states that the flux $\mathbb{J}$ depends on the pressure
gradient so it takes the form%
\begin{equation}
\mathbb{J}\left( x,t\right) =-\frac{\mathcal{K}\left( x\right) }{\nu }%
u\left( x,t\right) \nabla p\left( x,t\right) ,\text{ }x\in \Omega ,\text{ }%
t>0,  \label{Darcy}
\end{equation}%
where $\nu >0$ is the viscosity of the fluid and $\mathcal{K}\left( x\right)
$ defines the permeability of the porous medium. Finally, if one also makes
the assumption that the pressure $p$ is described by an equation of state
involving the density, $p=b\left( u\right) $, then substituting the
appropriate quantities in (\ref{gen_PM}), we obtain the following porous
medium equation%
\begin{equation}
\partial _{t}\left( \alpha \left( x\right) u\left( x,t\right) \right) =\frac{%
1}{\nu }\text{div}\left( \mathcal{K}\left( x\right) a\left( u\left(
x,t\right) \right) \nabla u\left( x,t\right) \right) +\Xi \left( x,t,u\left(
x,t\right) \right) ,\text{ in }\Omega ,\text{ }t>0,  \label{gen_PM2}
\end{equation}%
where we have set $a\left( t\right) \equiv tb\left( t\right) $. The function
$b$ that relates the density to pressure is, in general, monotone and, in
fact, strictly increasing in many applications of the type of fluid being
considered in the literature.

Finally, let us now focus on the boundary condition (\ref{gen_DBC}). We now
show that a quite large class of boundary conditions for equation (\ref{PM1}%
) can be written in this way for various choices of $\Psi $. We emphasize
that in this formulation the boundary conditions arise naturally in the
formulation of the problem. Suppose first $\Gamma _{3}\equiv \Gamma _{1}$
(i.e., $\beta \left( x\right) >0$ a.e. in $\Gamma _{1}$) and $\beta \in
C^{1}\left( \Gamma _{1}\right) .$ Choosing $\Psi \equiv 0$ (i.e., no source
is located at $\Gamma _{1}$), so that $\partial _{t}u\equiv 0$ on $\Gamma
_{1}=\Gamma ,$ therefore%
\begin{equation}
u\left( x,t\right) =u_{0}\left( x\right) ,  \label{D}
\end{equation}%
for $x\in \Gamma _{1}$ and $t\geq 0,$ where $u_{0}$ is the initial condition
associated with equation (\ref{PM1}). Thus, we obtain a Dirichlet boundary
condition for $u$. In order to derive an inhomogeneous Neumann boundary
condition, we suppose that $\Psi $ only depends on $t$. Then, if $u$ is
sufficiently regular, we have $\partial _{t}u(x,t)=\left( 1/\beta \left(
x\right) \right) \Psi \left( t\right) $ on $\Gamma _{1},$ for any $t>0$.
Hence, if $\Gamma _{1}$ is smooth enough as well, we have $\partial
_{t}\left( \nabla u\right) =\nabla \left( \partial _{t}u\right) =\mathbf{%
\gamma }\left( x\right) \Psi \left( t\right) $ on $\overline{\Omega }\times
(0,+\infty ),$ for some $\mathbf{\gamma }\left( x\right) \in \mathbb{R}^{N}$%
. This entails that $\nabla u\left( x,t\right) =\mathbf{H}\left( x,t\right) $
holds for $(x,t)\in \Gamma _{1}\times (0,+\infty )$ and some smooth function
$\mathbf{H}\in \mathbb{R}^{N}.$ Therefore, we have
\begin{equation}
\nabla u\left( x,t\right) \cdot \mathbf{n}=\mathbf{H}\left( x,t\right) \cdot
\mathbf{n},\quad (x,t)\in \Gamma _{1}\times (0,+\infty ).  \label{N}
\end{equation}%
To obtain a Robin boundary condition, we set $\Psi \left( x,t,u,\nabla
u\right) =\beta \left( x\right) e^{Cr}q\left( t\right) ,$ for some $C\in
\mathbb{R}$, where $r$ is defined as the parameter describing the line $\ell
$ which passes through $x$ and contains ${\mathbf{n}}$ such that $r>0$ at
all points on $\ell \cap \Omega $ which are close to $x.$ We thus have $%
\partial _{t}u\left( x,t\right) =e^{Cr}q\left( t\right) $ which implies
\begin{equation*}
\nabla \left( \partial _{t}u\left( x,t\right) \right) \cdot \mathbf{n}\left(
x,t\right) =\partial _{t}\left( \nabla u\left( x,t\right) \cdot \mathbf{n}%
\right) \left( x,t\right) =Ce^{Cr}q\left( t\right) ,
\end{equation*}%
for $(x,t)\in \Gamma _{1}\times (0,+\infty )$. Therefore, we infer%
\begin{equation*}
\partial _{t}\left( \nabla u\left( x,t\right) \cdot \mathbf{n}\right) \left(
x,t\right) -C\partial _{t}u\left( x,t\right) =0,\qquad \text{ on }\Gamma
_{1}\times (0,+\infty ),
\end{equation*}%
so that
\begin{equation}
\nabla u\left( x,t\right) \cdot \mathbf{n}-Cu(x,t)=j\left( x\right) ,\quad
(x,t)\in \Gamma _{1}\times (0,+\infty ).  \label{R}
\end{equation}%
We have thus recovered the most common boundary conditions. On the other
hand, in order to model a water source placed on the boundary $\Gamma _{1},$
we may assume that $\Psi $ only depends nonlinearly on the flux $\nabla
u\cdot \mathbf{n}$ across the boundary as well as on a nonlinear source $%
g\left( u\right) ,$ which may represent effects of reaction or absorption at
$\Gamma _{1}$. That is, we let
\begin{equation*}
\Psi \left( x,u,\nabla u\right) =-a\left( u\right) \nabla u\cdot \mathbf{n}%
-g\left( u\right) -h_{2}\left( x\right) .
\end{equation*}%
Then, the resulting boundary condition for equation (\ref{PM1}) becomes%
\begin{equation}
\beta \left( x\right) \partial _{t}u\left( x,t\right) +a\left( u\left(
x,t\right) \right) \nabla u\left( x,t\right) \cdot \mathbf{n}+g\left(
u\left( x,t\right) \right) =h_{2}\left( x\right) ,  \label{PM2bis}
\end{equation}%
for $\left( x,t\right) \in \Gamma _{1}\times (0,+\infty )$. This is a
reasonably general condition which contains the usual (homogeneous) ones
along with the so-called dynamic boundary condition when $\Gamma _{3}=\Gamma
_{1}\equiv \Gamma $. When ponding or surface runoff occurs at the surface $%
\Gamma _{1}$, it also includes the dynamic boundary condition contained in
Filo-Luckhaus \cite{FL}. Finally, we notice that $\beta \left( x\right) $
may be such that $\Gamma _{3}\neq \Gamma _{1}$, so that we can also cover
the case where the boundary conditions (\ref{PM2bis}) are dynamic only on a
part of the boundary of $\Gamma _{1}$. If the source $\Psi $ also depends on
$v$ in the tangential coordinates of the boundary, i.e., $\Psi =\Psi \left(
x,u,\nabla u,\nabla _{\Gamma _{1}}u\right) ,$ then we can also model local
diffusion in (\ref{PM2bis}) by incorporating the elliptic Laplace-Beltrami
operator $\Delta _{\Gamma _{1}}$ (or other nonlinear differential operators)
on the manifold $\Gamma _{1}\subset \mathbb{R}^{N-1}$.

We will now give a physical interpretation of the effect of a water source
on the patch $\Gamma _{1}$, at least in some cases. A similar approach was
also used in the derivation of heat and wave equations (see, e.g., \cite{Gi}%
). We will mainly focus on the following boundary condition%
\begin{equation}
\partial _{t}u+a\left( u\right) \nabla u\cdot \mathbf{n}=0,\text{ on }\Gamma
_{1}\times \left( 0,\infty \right) .  \label{DYN0}
\end{equation}%
We work in an infinitesimal region on the boundary. Choose a point $x\in
\Gamma _{1}$ and let $B_{\kappa }\left( x\right) $ be a ball of radius $%
\kappa >0$ about $x$. Since $\Gamma _{1}$ is regular, we can choose a
coordinate system for $B_{\kappa }\left( x\right) \cap \overline{\Omega }$
so that the boundary of $B_{\kappa }\left( x\right) \cap \overline{\Omega }$
in the transformed coordinate system is flat, $x$ is mapped to $\overline{x}%
=\left( x_{1},x_{2},...,x_{N-1},0\right) $, that is, the boundary $\Gamma $,
at least locally near $x$ lies on the hyperplane $x_{N}=0$. Then the outward
unit normal $\mathbf{n}$ to $\Gamma $ at $x$ is the unit vector in the
direction of $e_{N}$ which we will denote by $r$. Then, locally near $x,$ (%
\ref{DYN0}) becomes%
\begin{equation}
\partial _{t}u+\partial _{r}\left( A\left( u\right) \right) =0,\text{ }%
\left( r,t\right) \in \left( 0,r_{0}\right) \times \left( 0,t_{0}\right) ,
\label{CLaw}
\end{equation}%
for some sufficiently small positive constants $r_{0}$, $t_{0}$. Observe
that (\ref{CLaw}) resembles nothing more than a scalar \emph{conservation law%
}, where we have set $A\left( u\right) =\int_{0}^{u}a\left( t\right) dt$.
Equation (\ref{CLaw}), subject to initial condition $u\left( r,0\right)
=v_{0}\left( r\right) ,$ $r\in \left( 0,r_{0}\right) ,$ possesses
interesting types of fundamental solutions such as, travelling waves that
describe the movement of a mass of fluid in the direction of the unit normal
$\mathbf{n}\in \mathbb{R}^{N}$, and source-type solutions starting from a
finite mass concentrated at a single point of space, say, $v_{0}\left(
r\right) =\mathcal{C}\delta _{0}\left( r\right) ,$ $\mathcal{C}>0$.

In the latter case, explicit self-similar solutions of the form $%
u(r,t)=\Theta \left( r/t\right) $ are well-known to exist, as a consequence
of the invariance of (\ref{CLaw}) under the scaling $\left( r,t\right)
\longmapsto \left( \lambda r,\lambda t\right) ,$ $\lambda \in \mathbb{R}$.
More precisely, from (\ref{CLaw}) $\Theta $ clearly has to satisfy%
\begin{equation*}
\zeta \Theta ^{^{\prime }}\left( \zeta \right) -\left( A\left( \Theta \left(
\zeta \right) \right) \right) ^{^{\prime }}=0
\end{equation*}%
and this yields, formally, to%
\begin{equation*}
u\left( r,t\right) =\Theta \left( r/t\right) =(A^{^{\prime }})^{-1}\left(
r/t\right) ,
\end{equation*}%
as long as, $(A^{^{\prime }})^{-1}$ is well defined at least in a
sufficiently small real interval. In the former case, one can search for
particular solutions of (\ref{CLaw}) in the form $u\left( r,t\right) =\eta
\left( r-ct\right) ,$ where $c\in \mathbb{R}$ is the speed of the travelling
wave and $\eta $ has to be determined. Substituting the expression for $%
u\left( r,t\right) $ in (\ref{CLaw}), we deduce that $c$ is an eigenvalue
(with $\eta ^{^{\prime }}$ as eigenvector)\ for%
\begin{equation*}
\left( -c+a\left( \eta \left( r-ct\right) \right) \right) \eta ^{^{\prime
}}\left( r-ct\right) =0.
\end{equation*}%
Under appropriate structural conditions on $a\left( \cdot \right) $ (see,
e.g., Section 3), this eigenvalue problem is strictly hyperbolic (cf., e.g.,
\cite[Chapter 11]{Evans}) with speed $c=c\left( \eta \right) >0,$ hence a
wave-like solution $\eta =\eta \left( r-ct\right) $ to (\ref{CLaw}) can be
found. This is a unidirectional wave which travels \emph{into} the region $%
\Omega $. We can now map back to our original coordinate system to find that
$u\left( r,t\right) =\eta \left( x-ct\mathbf{n}\right) $ is a solution to (%
\ref{DYN0}). In plain physical terms, the mass of fluid is carried by the
wave $\eta $ into an infinitesimal layer near the boundary $\Gamma $. This
wave will cease to exist after some small time since once inside $\Omega $,
the primary process is governed by nonlinear diffusion in the porous medium
equation (\ref{gen_PM2}).

It is easy to extend our derivation to systems of the form (\ref{s1})-(\ref%
{s4}) and to give them phyiscal interpretations. For instance, these systems
also occur in the pharmaceutical industry by mathematical modells for the
development of blood coagulation treatments with specific coagulation
factors \cite{EPS, HH, MPE}. The systems (\ref{s1})-(\ref{s4}) are also
motivated by diffusion processes on metric graphs and ramified spaces, which
yield interface problems for quantum graphs with coupled dynamic boundary
conditions at the nodes (see, e.g., \cite{M} and references therein). On the
other hand, the reaction-diffusion equations (\ref{s1}) (for $A_{i}\left(
u_{i}\right) =d_{i}u_{i},$ $i\in \mathbb{N}_{m}$) arise as models for the
densities $u_{i},$ $i\in \mathbb{N}_{m}$ of substances or organisms that
disperse through space by Brownian motion, random walks, hydronamic
turbulance or similar mechanisms. These equations are widely used as models
to account for spatial effects in ecological enviroments \cite{Co}. For
equations (\ref{s1}), the Dirichlet boundary condition (\ref{D}) specifies
the density $u_{i}$ of species at the boundary $\Gamma $, with an
interpretation that anything that reaches the boundary $\Gamma $ of $\Omega $
leaves and does not return. If $u_{0}\equiv 0,$ then (\ref{D}) may be
interpreted as if the species suffers extiction if say the patch $\Gamma _{1}
$ where the individuals live is toxic. The (homogeneous, $\mathbf{H}\equiv
\mathbf{0}$) Neumann boundary condition (\ref{N}) says that nothing can
cross the boundary of $\Omega .$ Another relation in ecological models is
the Robin boundary condition (\ref{R}) with $j\left( x\right) \equiv 0$ and $%
C=C_{i}\in \mathbb{R}^{\ast },$ $i\in \mathbb{N}_{m}$ which can be
interpreted as saying that when organisms reach the boundary some leave it
but some do \emph{not }depending on the sign of $C_{i}$. Finally, the other
\emph{not} so common condition is the Wentzell-type (dynamic) boundary
condition (\ref{PM2bis}) which states that change in the density of
individuals at $\Gamma _{1}$ is a function of their flux in the normal
direction across $\Gamma _{1}$ and some other function of density if no
dispersive effects along $\Gamma _{1}$ are taken into account. Following our
reasonning above, this type of boundary condition (\ref{s3}) can be
interpreted as saying that some individuals may choose to live on the patch $%
\Gamma _{1}$ but some may\emph{\ not} and can choose to return to the region
$\Omega $, where spatial diffusion coupled with reaction in the bulk $\Omega
$\ is the main mechanism for population movement and interaction. Suppose,
for instance, that certain critical resources for a specific population $%
u_{i},$ $i\in \mathbb{N}_{m},$ are available only on $\Gamma _{1}$. Then $%
u_{i}$ must obey the rule%
\begin{equation}
\partial _{t}u_{i}+d_{i}\nabla u_{i}\cdot \mathbf{n}+h\left( x\right)
u_{i}=0,\text{ on }\Gamma _{1}\times \left( 0,\infty \right) ,  \label{CD}
\end{equation}%
which says that the density $u_{i}$ diffuses (in an infinitesimal layer near
$\Gamma _{1}$) toward the patch $\Gamma _{1}$ in the direction of normal
flux. Again the main mechanism for this behavior here is the influence of
external forces on $\Gamma $ on a particular population $u_{i}$. Of course,
in this context the function $h\left( x\right) $ plays the role of a
resource density function on $\Gamma _{1}$, and it can generally depend also
on time. In fact, it is not hard to imagine a typical scenario where
predatory individuals are preferentially concentrated around valued
resources on $\Gamma _{1}$ where the likelihood of prey is greatest. Hence,
in the more general case of (\ref{PM2bis}) the state densities $u_{i}$ may
be also allowed to carry mass on $\Gamma _{1}$ in contrast to the usual
Robin condition for which the mass is always zero. This general description (%
\ref{s3}) along the patch $\Gamma _{1}$ can have substantial consequences on
the dynamics of various ecological enviroments modelled by
reaction-diffusion systems. We give a short reasonning for this behavior as
follows. In the case of a scalar (non-degenerate) diffusion equation ($m=1,$
$a_{1}\left( \cdot \right) \equiv d_{1}$), we have shown in \cite{GalNS, G0}
(say, in dimension $N\geq 3$) that problem (\ref{1.11bb})-(\ref{1.12bb})
posseses a finite dimensional global attractor $\mathcal{A}_{\text{dyn}}$
whose dimension is essentially of\emph{\ different order} than the dimension
of the global attractor $\mathcal{A}_{\text{D-N-R}}$ for the same parabolic
problem (\ref{1.11bb}) with a Dirichlet/Neumann-Robin boundary condition (%
\ref{D})-(\ref{R}) (cf., also \cite{G0}). In particular, the correct
asymptotics for the Hausdorff and, respectively, the fractal dimensions of $%
\mathcal{A}_{\text{dyn}}$\ are%
\begin{equation}
\dim _{H}\mathcal{A}_{\text{dyn}}\sim C\left( f_{1},g_{1}\right) \frac{%
\left\vert \Gamma \right\vert }{\left( \nu b\right) ^{N-1}}\text{, }\dim _{F}%
\mathcal{A}_{\text{dyn}}\sim C\left( f_{1},g_{1}\right) \frac{\left\vert
\Gamma \right\vert }{\left( \nu b\right) ^{N-1}},  \label{dim}
\end{equation}%
as long as $\nu \rightarrow 0^{+}$. Here, $C=C\left( f_{1},g_{1}\right) $ is
a positive constant that is independent of the size of $\Omega ,$ but
depends only on $f_{1}$ and $g_{1}$, and $\left\vert \Gamma \right\vert $
denotes the natural Lebesgue surface measure of $\Gamma \subset \mathbb{R}%
^{2}$. Note that the asymptotics for the dimension of $\mathcal{A}_{\text{%
D-N-R}}$ is actually $C\left( f_{1}\right) \left\vert \Omega \right\vert
/\left( \nu b\right) ^{N/2}$, as $\nu \rightarrow 0^{+}$ (see, e.g., \cite%
{BV}) suggesting that the dynamics on $\mathcal{A}_{\text{dyn}}$ is \emph{%
qualitatively} different than that on $\mathcal{A}_{\text{D-N-R}}$ even
though both systems are gradient like \cite{G0}\ (i.e., both problems
possess a global Lyapunov function). The asymptotic estimates in (\ref{dim})
are essentially determined by the instability indices of a properly chosen
family of (hyperbolic) equilibria $u_{\ast }$ (see, e.g., \cite{BV}, \cite%
{GalNS}). One achieves a lower bound like (\ref{dim}) by computing the
dimension of the unstable eigenspace $E^{u}$ of the linearization of (\ref%
{1.11bb})-(\ref{1.12bb}) around a constant equilibrium $u_{\ast }$. In this
case, the linearized system possesses at least $n\sim C\left(
f_{1},g_{1}\right) \left\vert \Gamma \right\vert /\left( \nu b\right) ^{N-1}$
(as $\nu \rightarrow 0^{+}$) unstable solutions. This points out once again
to the destabilizing nature of the dynamic boundary condition (\ref{CD})
even when the dynamics in the bulk $\Omega $\ is essentially strictly \emph{%
linear} (see, Appendix). We emphasize that this kind of behavior \emph{cannot%
} hold for the Dirichlet/Neumann-Robin boundary condition (\ref{D})-(\ref{R}%
).

\section{Main results}

\label{fs}

The natural phase-space for problems of the form (\ref{s1})-(\ref{s4}) is%
\begin{equation*}
\mathbb{X}^{s_{1},s_{2}}:=L^{s_{1}}(\Omega )\oplus L^{s_{2}}(\Gamma )=\{U=%
\binom{u_{1}}{u_{2}}:\;u_{1}\in L^{s_{1}}(\Omega ),\;u_{2}\in
L^{s_{2}}(\Gamma )\},
\end{equation*}%
$s_{1},s_{2}\in \left[ 1,+\infty \right] ,$ endowed with norm%
\begin{equation}
\left\Vert U\right\Vert _{\mathbb{X}^{s_{1},s_{2}}}=\left( \int_{\Omega
}\left\vert u_{1}\left( x\right) \right\vert ^{s_{1}}dx\right)
^{1/s_{1}}+\left( \int_{\Gamma }\left\vert u_{2}(x)\right\vert
^{s_{2}}dS_{x}\right) ^{1/s_{2}},  \label{2.1}
\end{equation}%
if $s_{1},s_{2}\in \lbrack 1,\infty ),$ and
\begin{align*}
\Vert U\Vert _{\mathbb{X}^{\infty }}& :=\max \{\Vert u_{1}\Vert _{L^{\infty
}(\Omega )},\Vert u_{2}\Vert _{L^{\infty }(\Gamma )}\} \\
& \simeq \Vert u_{1}\Vert _{L^{\infty }(\Omega )}+\Vert u_{2}\Vert
_{L^{\infty }(\Gamma )}.
\end{align*}%
We agree to denote by $\mathbb{X}^{s}$ the space $\mathbb{X}^{s,s}.$
Moreover, we have%
\begin{equation}
\mathbb{X}^{s}=L^{s}\left( \overline{\Omega },d\mu \right) ,\text{ }s\in %
\left[ 1,+\infty \right] ,  \label{2.1b}
\end{equation}%
where the measure $d\mu =dx_{\mid \Omega }\oplus dS_{x}{}_{\mid \Gamma }$ on
$\overline{\Omega }$ is defined for any measurable set $A\subset \overline{%
\Omega }$ by%
\begin{equation}
\mu (A)=|A\cap \Omega |+S(A\cap \Gamma ).  \label{2.1t}
\end{equation}%
Identifying each function $\theta \in C\left( \overline{\Omega }\right) $
with the vector $\Theta =\binom{\theta _{\mid \Omega }}{\theta _{\mid \Gamma
}}$, we have that $C(\overline{\Omega })$ is a dense subspace of $\mathbb{X}%
^{s}$ for every $s\in \lbrack 1,\infty )$ and a closed subspace of $\mathbb{X%
}^{\infty }$. In general, any vector $\theta \in \mathbb{X}^{s}$ will be of
the form $\binom{\theta _{1}}{\theta _{2}}$ with $\theta _{1}\in L^{s}\left(
\Omega ,dx\right) $ and $\theta _{2}\in L^{s}\left( \Gamma ,dS\right) ,$ and
there need not be any connection between $\theta _{1}$ and $\theta _{2}$.

Next, we set%
\begin{equation*}
\mathcal{X}^{s_{1},s_{2}}:=\prod\nolimits_{i\in I_{m}}L^{s_{1}}\left( \Omega
\right) \times \prod\nolimits_{i\in J_{m}}\mathbb{X}^{s_{1},s_{2}},\text{
for any }s_{1},s_{2}\in \left[ 1,+\infty \right] ,
\end{equation*}%
where, for any given set $X,$%
\begin{equation*}
\prod\nolimits_{i\in I}X:=\underset{\left\vert I\right\vert \text{ times}}{%
\underbrace{X\times ...\times X}}.
\end{equation*}%
The norm in the space $\mathcal{X}^{s_{1},s_{2}},$ for any $s_{1},s_{2}\in
\lbrack 1,+\infty )$ is%
\begin{equation}
\left\Vert \overrightarrow{u}\right\Vert _{\mathcal{X}^{s_{1},s_{2}}}:=\sum%
\nolimits_{i\in I_{m}}\left\Vert u_{i}\right\Vert _{L^{s_{1}}\left( \Omega
\right) }+\sum\nolimits_{i\in J_{m}}\left( \left\Vert u_{i}\right\Vert
_{L^{s_{1}}\left( \Omega \right) }+\delta _{i}\left\Vert u_{i}\right\Vert
_{L^{s_{2}}\left( \Gamma \right) }\right) ,  \label{norm}
\end{equation}%
where $\overrightarrow{u}=\left( u_{1},...,u_{m}\right) $, while the norm in
the space $\mathcal{X}^{\infty }:=\mathcal{X}^{\infty ,\infty }$ is
naturally given by%
\begin{equation*}
\left\Vert \overrightarrow{u}\right\Vert _{\mathcal{X}^{\infty }}:=\max
\{\max_{i\in \mathbb{N}_{m}}\left\Vert u_{i}\right\Vert _{L^{\infty }\left(
\Omega \right) },\max_{i\in J_{m}}\left\Vert u_{i}\right\Vert _{L^{\infty
}\left( \Gamma \right) }\}.
\end{equation*}%
If $s_{1}=s_{2}=s,$ we will simply write $\mathcal{X}^{s}$ instead of $%
\mathcal{X}^{s_{1},s_{2}}$. Finally, without further abuse of notation, we
will also refer to $\mathcal{X}^{\overrightarrow{r}},$ $\overrightarrow{r}%
=\left( r_{1},...,r_{m}\right) $, as the following Banach space%
\begin{equation*}
\mathcal{X}^{\overrightarrow{r}}:=\prod\nolimits_{i\in I_{m}}L^{r_{i}}\left(
\Omega \right) \times \prod\nolimits_{i\in J_{m}}\mathbb{X}^{r_{i},r_{i}},
\end{equation*}%
endowed with the natural norm in (\ref{norm}).

Let us now state our main hypotheses on the source terms $f_{i},g_{i},$ $%
h_{i}$ and nonlinear diffusions $a_{i}$, for each $i\in \mathbb{N}_{m}.$

\noindent \textbf{Conditions on} $a_{i}:$ The Carath\'{e}odory functions $%
a_{i}$ (with values in $\mathbb{R}$) satisfy the condition: $\exists \alpha
_{i}>0,$ $\forall s_{i}\in \mathbb{R}$ such that%
\begin{equation}
a_{i}\left( s_{i}\right) \geq \alpha _{i}\left\vert s_{i}\right\vert
^{p_{i}},\text{ }i\in \mathbb{N}_{m},  \label{A1}
\end{equation}%
for some nonnegative $p_{i}.$

\noindent \textbf{Conditions on} $f_{i}$, $g_{i},$ $h_{i}:$ The Carath\'{e}%
odory functions $f_{i}$, $g_{i},$ $h_{i}$ (with values in $\mathbb{R}$)
satisfy the conditions: $\exists C_{f_{i}},C_{g_{i}},C_{h_{i}}>0,$ for
almost all $\left( x,t\right) ,$ $\forall s_{i}\in \mathbb{R}$, such that%
\begin{equation}
\left\{
\begin{array}{l}
\sum\nolimits_{i\in \mathbb{N}_{m}}f_{i}\left( x,t,s_{1},...,s_{m}\right)
s_{i}\geq -\sum\nolimits_{i\in \mathbb{N}_{m}}C_{f_{i}}\left\vert
s_{i}\right\vert ^{2}-\widetilde{C}_{f}, \\
\sum\nolimits_{i\in J_{m}}g_{i}\left( x,t,s_{1},...,s_{m}\right) s_{i}\geq
-\sum\nolimits_{i\in J_{m}}C_{g_{i}}\left\vert s_{i}\right\vert ^{2}-%
\widetilde{C}_{g}, \\
\sum\nolimits_{i\in I_{m}}h_{i}\left( x,t,s_{1},...,s_{m}\right) s_{i}\geq
-\sum\nolimits_{i\in I_{m}}C_{h_{i}}\left\vert s_{i}\right\vert ^{2}-%
\widetilde{C}_{h},%
\end{array}%
\right.  \label{A2}
\end{equation}%
for some nonnegative $\widetilde{C}_{f},$ $\widetilde{C}_{g}$ and $%
\widetilde{C}_{h}$.

The question of global existence and the $L^{p}$-$L^{\infty }$ smoothing
property for solutions of (\ref{s1})-(\ref{s4}) can be stated for the
function $\overrightarrow{u}=\left( u_{1},...,u_{m}\right) $, as follows. We
say that the parabolic system (\ref{s1})-(\ref{s4}) satisfies \textbf{%
Property P}$\left( r_{1},r_{2}\right) $, for some finite $r_{1},r_{2}\geq 1$%
, if, for all $i\in \mathbb{N}_{m}$, any of the following conditions holds:

(i) There exists a positive function $Q,$ independent of initial data, and a
positive constant $\eta $, such that%
\begin{equation}
\sup_{t\geq \eta >0}\left\Vert \overrightarrow{u}\left( t\right) \right\Vert
_{\mathcal{X}^{r_{1},r_{2}}}\leq Q\left( \eta \right) .  \label{dis}
\end{equation}

(ii) There exists a positive constant $\mathcal{C}$, independent of initial
data, such that%
\begin{equation}
\underset{t\rightarrow \infty }{\lim \sup }\left\Vert \overrightarrow{u}%
\left( t\right) \right\Vert _{\mathcal{X}^{r_{1},r_{2}}}\leq \mathcal{C}.
\label{point}
\end{equation}

(iii) If $\overrightarrow{u}_{0}=\left( u_{10},...,u_{m0}\right) \in $ $%
\mathcal{X}^{\infty },$ $i\in \mathbb{N}_{m}$, there exists a positive
function $Q$, independent of initial data, such that%
\begin{equation}
\sup_{t\geq 0}\left\Vert \overrightarrow{u}\left( t\right) \right\Vert _{%
\mathcal{X}^{r_{1},r_{2}}}\leq Q\left( \left\Vert \overrightarrow{u}%
_{0}\right\Vert _{\mathcal{X}^{\infty }}\right) .  \label{global}
\end{equation}

The first result concerning the $L^{\infty }$-estimate for solutions of (\ref%
{s1})-(\ref{s4}) in the non-degenerate case, shows that if either one of the
properties for\textbf{\ P}$\left( s_{1},s_{2}\right) $ above holds apriori
for some finite $s_{1},s_{2}\geq 1$, then it also holds for $%
s_{1}=s_{2}=\infty $.

\begin{theorem}
\label{linf}Let the assumptions (\ref{A1}), (\ref{A2}) be satisfied such
that $p_{i}=0$, for all $i\in \mathbb{N}_{m}$ (i.e., (\ref{non}) holds).
Suppose that the system (\ref{s1})-(\ref{s4}) satisfies the property \textbf{%
P}$\left( 1,1\right) $-(i) (respectively, (ii) or (iii)), then it also
satisfies \textbf{P}$\left( \infty ,\infty \right) $-(i) (respectively, (ii)
or (iii)).
\end{theorem}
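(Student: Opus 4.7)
The plan is to establish the $L^p$–$L^\infty$ smoothing estimate by an Alikakos–Moser iteration conducted inside the measure space $(\overline\Omega,d\mu)=(\Omega,dx)\oplus(\Gamma,dS)$ of Section~3, so that the bulk and boundary contributions appear symmetrically as components of a single $\mathbb{X}^s$-functional.

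First, for each integer $q\ge 2$ and each $i\in\mathbb{N}_m$, I would test (\ref{s1}) against $u_i|u_i|^{q-2}$ and integrate by parts on $\Omega$. For $i\in J_m$ the dynamic condition (\ref{s3}) converts $\int_\Gamma\partial_{\mathbf n}(A_i(u_i))\,u_i|u_i|^{q-2}\,dS$ into the time derivative $(\delta_i/q)\tfrac{d}{dt}\int_\Gamma|u_i|^q\,dS$ plus a controllable $g_i$-source, whereas for $i\in I_m$ the identity $\partial_{\mathbf n}(A_i(u_i))=-a_i(u_i)h_i$ read off from (\ref{s2}) produces a boundary source involving $h_i$. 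Summing over $i$, invoking (\ref{A1}) with $p_i=0$, and using $|u_i|^{q-2}|\nabla u_i|^2=\tfrac{4}{q^2}|\nabla(|u_i|^{q/2})|^2$, I would arrive at
\begin{equation*}
\frac{1}{q}\frac{d}{dt}\|\vec u\|_{\mathcal{X}^q}^q+\frac{4(q-1)}{q^2}\sum_{i}\alpha_i\int_\Omega\bigl|\nabla(|u_i|^{q/2})\bigr|^2\,dx\le\mathcal{R}_q(t),
\end{equation*}
where $\mathcal{R}_q$ gathers the contributions of $f_i$, $g_i$, $h_i$.

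Next, I would absorb $\mathcal{R}_q$ into a bound of the form $C(q)\|\vec u\|_{\mathcal{X}^q}^q+\widetilde C(q)$ by applying the summed dissipativity (\ref{A2}) and Young's inequality, and then invoke the bulk Sobolev embedding $H^1(\Omega)\hookrightarrow L^{2N/(N-2)}(\Omega)$ together with the trace embedding $H^1(\Omega)\hookrightarrow L^{2(N-1)/(N-2)}(\Gamma)$ (with the obvious modifications when $N\le 2$) to obtain, for some fixed gain $\sigma>1$ depending only on $N$,
\begin{equation*}
\bigl\||u_i|^{q/2}\bigr\|_{L^{2\sigma}(d\mu)}^{2}\le K\Bigl(\bigl\|\nabla(|u_i|^{q/2})\bigr\|_{L^2(\Omega)}^{2}+\bigl\||u_i|^{q/2}\bigr\|_{L^2(d\mu)}^{2}\Bigr).
\end{equation*}
Feeding this gain back into the energy inequality and applying the quantitative form of Gronwall's lemma announced in the Introduction produces a recursion relating $\|\vec u\|_{\mathcal{X}^{\sigma q}}$ to $\|\vec u\|_{\mathcal{X}^q}$. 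Iterating along $q_k=\sigma^k q_0$ from the $L^1$ base case supplied by property P$(1,1)$, and controlling the accumulating constants so that they remain bounded as $k\to\infty$, would deliver the $\mathcal{X}^{\infty}$ bound in the limit. The three variants of P are then preserved automatically: variant (i) by restricting the iteration to $t\ge\eta$, variant (ii) by running it on translated intervals $[T,T+1]$ with $T\to\infty$, and variant (iii) by taking the initial data as the permanent input.

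The main obstacle I foresee is in the second step: the hypothesis (\ref{A2}) is a \emph{summed} dissipativity bound, whereas the natural quantity produced by the weighted test $u_i|u_i|^{q-2}$ is $\sum_i f_i(x,t,\vec u)\,u_i\,|u_i|^{q-2}$, which is not directly controlled by (\ref{A2}). I expect to resolve this by factoring $|u_i|^{q-2}\le \|\vec u\|_{\ell^\infty}^{q-2}$ pointwise in $i$, applying (\ref{A2}) to the residual sum $\sum_i f_i u_i$, and then redistributing with Young's inequality so that the extra power is converted into a $\mathcal{X}^q$-contribution with a $q$-dependent constant; the same scheme will handle the $g_i$ and $h_i$ terms on $\Gamma$. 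Once this componentwise rewriting of (\ref{A2}) is in place, the rest of the iteration is standard.
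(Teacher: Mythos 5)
Your outline follows the paper's own proof essentially step for step: testing (\ref{s1})--(\ref{s3}) with $|u_i|^{q-2}u_i$, assembling bulk and boundary identities into one differential inequality for $\Vert \overrightarrow{u}\Vert_{\mathcal{X}^{q}}^{q}$, converting $|u_i|^{q-2}|\nabla u_i|^2$ into $|\nabla(|u_i|^{q/2})|^2$, interpolating with $H^1(\Omega)\subset L^{2N/(N-2)}(\Omega)$ and $H^1(\Omega)\subset L^{2(N-1)/(N-2)}(\Gamma)$ (the paper's Lemma \ref{boundary}, with the $q$-dependent choice (\ref{epsi}) of $\varepsilon$, plays the role of your trace step for the $h_i$ terms), and then running a Moser iteration on nested time intervals; the three variants of Property P are dispatched exactly as you describe. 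The only substantive omission at this level is the regularization (\ref{s1c})--(\ref{s4c}) that the paper uses to justify the formal computations on smooth approximate solutions.

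The genuine gap is at the step you yourself flag, and your proposed repair does not close it. Setting $M:=\max_j|u_j|$, the replacement $|u_i|^{q-2}\le M^{q-2}$ yields $f_iu_i|u_i|^{q-2}\ge M^{q-2}f_iu_i$ only on the indices where $f_iu_i<0$; where $f_iu_i\ge 0$ the inequality reverses, and (\ref{A2}) gives no upper bound on those terms, so the full sum $\sum_i f_iu_i$ to which (\ref{A2}) applies cannot be reassembled. The obstruction is not merely technical: for $m=2$, $f_1(s_1,s_2)=-s_1s_2^2$, $f_2(s_1,s_2)=s_2s_1^2$, one has $\sum_i f_is_i\equiv 0$, so (\ref{A2}) holds with $C_{f_i}=\widetilde{C}_f=0$, yet along $s_1=R^2$, $s_2=R$,
\begin{equation*}
\sum_{i}f_i(s_1,s_2)\,s_i|s_i|^{q-2}=s_1^2s_2^2\left(|s_2|^{q-2}-|s_1|^{q-2}\right)\sim -R^{2q+2},
\end{equation*}
which is not bounded below by $-c\sum_i|s_i|^{q}-c$. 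Hence no pointwise manipulation can derive the required analogue of (\ref{ineqf}) from the summed form of (\ref{A2}) alone. What the paper actually uses (tacitly) is the componentwise inequality $f_i(\overrightarrow{s})s_i\ge -C_{f_i}|s_i|^2-\widetilde{C}_f$ for each $i$, which may be multiplied by the nonnegative weight $|s_i|^{q-2}$ before summing and then treated with Young's inequality; the same remark applies to $g_i$ and $h_i$. You should either adopt that componentwise reading of (\ref{A2}) or state it as a strengthened hypothesis; with it in place, the remainder of your iteration goes through as in the paper.
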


\begin{remark}
\label{rem1}Note that we can also consider the more general case in which $%
a_{i}\left( u_{i}\right) $ is replaced by $a_{i}\left( x,t,\overrightarrow{u}%
\right) ,$ $i\in \mathbb{N}_{m}$, that is, the equations (\ref{s1}) are
strongly coupled in their diffusions. In this case, we must replace
assumption (\ref{A1}) by%
\begin{equation}
a_{i}\left( x,t,\overrightarrow{s}\right) \geq \alpha _{i}\left\vert
\overrightarrow{s}\right\vert ^{p_{i}},\text{ }\forall \overrightarrow{s}\in
\mathbb{R}^{m},  \label{remp}
\end{equation}%
for almost all $\left( x,t\right) $, and notice that all the computations
performed in the proof of Theorem \ref{linf} hold automatically since $%
\left\vert \overrightarrow{s}\right\vert ^{p_{i}}\geq \left\vert
s_{i}\right\vert ^{p_{i}}$ for any $\overrightarrow{s}=\left(
s_{1},...,s_{m}\right) \in \mathbb{R}^{m}$ (of course, in that case $p_{i}=0$
by assumption, for all $i\in \mathbb{N}_{m}$). Theorem \ref{linf} can be
also extended to systems with $p$-Laplacian diffusions, i.e.,%
\begin{equation*}
a_{i}=a_{i}\left( u_{i},\left\vert \nabla u_{i}\right\vert ^{\varrho
_{i}-2}\right) \geq \alpha _{i}\left\vert \nabla u_{i}\right\vert ^{\varrho
_{i}-2},
\end{equation*}%
for some $\varrho _{i}\geq 2$, $i\in \mathbb{N}_{m}$, by following, for
instance, \cite{G0}.
\end{remark}

The second result is concerned with the full degenerate case (\ref{s1})-(\ref%
{s4}) when the assumptions\ of Theorem \ref{linf} do not hold (in
particular, if it happens that $p_{i}>0$ for some $i\in \mathbb{N}_{m}$).
The proof is based on a truncation technique which was originally developed
by DeGiorgi to study the regularity of solutions to elliptic equations, and
then extensively used by many authors to study weak solutions to degenerate
parabolic systems, subject to the usual static boundary conditions (see,
e.g., \cite{DiB, LSU}). Here, we extend DeGiorgi's method to problems of the
form (\ref{s1})-(\ref{s4}). In order to avoid additional technicalities due
to the different conditions that one can assign on the boundary $\Gamma $\
for each $u_{i},$ $i\in \mathbb{N}_{m}$, we shall focus our attention to the
case $J_{m}=\mathbb{N}_{m}$ only (i.e., we will assume that $%
I_{m}=\varnothing $). In this case, we require that the following growth
assumptions hold:%
\begin{equation}
\left\{
\begin{array}{l}
\left\vert f_{i}\left( x,t,s_{1},...,s_{m}\right) \right\vert \leq
C_{f}\left( \sum\nolimits_{i\in \mathbb{N}_{m}}\left\vert s_{i}\right\vert
^{\theta _{i}}+1\right) , \\
\left\vert g_{i}\left( x,t,s_{1},...,s_{m}\right) \right\vert \leq
C_{g}\left( \sum\nolimits_{i\in \mathbb{N}_{m}}\left\vert s_{i}\right\vert
^{\beta _{i}}+1\right) ,%
\end{array}%
\right.  \label{ga}
\end{equation}%
for some $\theta _{i},\beta _{i}>0$ and some positive constants $%
C_{f},C_{g}. $

\begin{theorem}
\label{linf3}Let (\ref{ga}) hold, and assume that $\exists \alpha
_{i},\sigma _{i}>0$ such that%
\begin{equation}
\alpha _{i}\left\vert \overrightarrow{s}\right\vert ^{p_{i}}\leq a_{i}\left(
x,t,\overrightarrow{s}\right) \leq \sigma _{i}\left\vert \overrightarrow{s}%
\right\vert ^{p_{i}},\text{ }i\in \mathbb{N}_{m},  \label{A1bis}
\end{equation}%
for any $\overrightarrow{s}=\left( s_{1},...,s_{m}\right) \in \mathbb{R}^{m}$%
. Let%
\begin{equation}
\delta :=\max_{i\in \mathbb{N}_{m}}\left\{ 2,\theta _{i}+1,\frac{p_{i}}{2}%
+1\right\} ,\text{ }\gamma :=\max_{i\in \mathbb{N}_{m}}\left\{ 2,\beta
_{i}+1,\frac{p_{i}}{2}+1\right\} .  \label{exp}
\end{equation}%
Suppose that the system (\ref{s1})-(\ref{s4}) satisfies the property \textbf{%
P}$\left( \delta ,\gamma \right) $-(i) (respectively, (ii) or (iii)), then
it also satisfies \textbf{P}$\left( \infty ,\infty \right) $-(i)
(respectively, (ii) or (iii)). In particular, for every $i\in \mathbb{N}_{m}$
and $T,\tau >0$ such that $T-2\tau >0,$ the following estimate holds:%
\begin{align}
& \sup_{\left( x,t\right) \in \left[ T-\tau ,T\right] \times \overline{%
\Omega }}\left\vert u_{i}\left( x,t\right) \right\vert  \label{llinf} \\
& \leq Q\left( 1+\left\Vert \overrightarrow{u}\right\Vert _{L^{\delta
}\left( \left[ T-2\tau ,T\right] \times \Omega \right) }+\left\Vert
\overrightarrow{u}\right\Vert _{L^{\gamma }\left( \left[ T-2\tau ,T\right]
\times \Gamma \right) }\right) ,  \notag
\end{align}%
for some positive function $Q$ which is independent of $\overrightarrow{u},$
time and the initial data. The function $Q$ can be computed explicitly in
terms of the physical paramaters of the problem.
\end{theorem}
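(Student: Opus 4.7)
The plan is to prove the local estimate (\ref{llinf}) by a DeGiorgi truncation/iteration scheme adapted to the Wentzell-type boundary condition (\ref{s3}), and then to deduce \textbf{P}$(\infty,\infty)$ from \textbf{P}$(\delta,\gamma)$ in each of the three cases (i)--(iii) as an immediate consequence. For each $i \in \mathbb{N}_m$, a level $k > 0$ (eventually taken large), and a smooth temporal cutoff $\zeta(t)$ supported in $[T-2\tau, T]$ with $\zeta \equiv 1$ on $[T-\tau, T]$, I would test (\ref{s1}) with $\zeta^2 (u_i - k)_+$, and symmetrically with $-\zeta^2(u_i+k)_-$. Using (\ref{s3}) to turn $\int_\Gamma \partial_{\mathbf{n}}(A_i(u_i))\,\zeta^2(u_i-k)_+$ into the time-derivative contribution $\tfrac{\delta_i}{2}\tfrac{d}{dt}\int_\Gamma \zeta^2(u_i-k)_+^2\,dS$ plus the boundary source $g_i\,\zeta^2(u_i-k)_+$, one obtains the energy identity
\begin{equation*}
\tfrac{1}{2}\tfrac{d}{dt}\!\left(\int_\Omega \zeta^2(u_i-k)_+^2\,dx + \delta_i\!\int_\Gamma \zeta^2(u_i-k)_+^2\,dS\right) + \int_\Omega a_i(\vec{u})\,\zeta^2|\nabla(u_i-k)_+|^2\,dx \leq R_\zeta + R_f + R_g,
\end{equation*}
where the right-hand side collects the cutoff-derivative term and the source terms controlled via (\ref{ga}).

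Next, I would use the two-sided bound (\ref{A1bis}) and the chain-rule identity $|u_i|^{p_i}|\nabla(u_i-k)_+|^2 \geq c_i |\nabla w_i|^2$ with $w_i := (u_i-k)_+^{p_i/2+1}$, which converts the degenerate diffusion into the full $H^1$ Dirichlet form of $w_i$. Coupling this with the interior Sobolev embedding $H^1(\Omega)\hookrightarrow L^{2^\ast}(\Omega)$ and the corresponding trace embedding yields integrability of $w_i$ in $L^{2^\ast}(\Omega)$ and $L^{2_\ast}(\Gamma)$, which, after undoing the $p_i/2+1$ power, translates into $(u_i-k)_+$ gaining integrability of order $(p_i+2)N/(N-2)$ on $\Omega$ and $(p_i+2)(N-1)/(N-2)$ on $\Gamma$. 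The exponents $\delta$ and $\gamma$ in (\ref{exp}) are chosen precisely so that the source bounds of orders $\theta_i+1$ and $\beta_i+1$, together with the $p_i/2+1$ factor coming from the degenerate diffusion, fit within this gained integrability via H\"older and Young, after which the gradient of $w_i$ is absorbed back into the left-hand side.

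Setting $k_n := k(2 - 2^{-n})$, shrinking times $\tau_n \downarrow \tau$, and
\begin{equation*}
Y_n := \iint_{[T-\tau_n,T]\times\Omega}(u_i-k_n)_+^\delta\,dx\,dt + \iint_{[T-\tau_n,T]\times\Gamma}(u_i-k_n)_+^\gamma\,dS\,dt,
\end{equation*}
I would then integrate the energy identity in time on $[T-\tau_n,T]$, use Chebyshev on the level sets $\{u_i>k_{n+1}\}$ to bound their measure by $Y_n/(k_{n+1}-k_n)^\delta \lesssim 2^{n\delta}Y_n/k^\delta$, and combine with the integrated Sobolev/trace inequality to obtain a recursion of DeGiorgi type $Y_{n+1} \leq C\,b^n\, Y_n^{1+\kappa}$ for some constants $b>1$, $\kappa>0$ depending only on $N$, the $p_i$, $\theta_i$, $\beta_i$. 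The standard fast-decay lemma then gives $Y_n \to 0$ as soon as $k$ is a fixed multiple of $1 + \|\vec{u}\|_{L^\delta([T-2\tau,T]\times\Omega)} + \|\vec{u}\|_{L^\gamma([T-2\tau,T]\times\Gamma)}$, which is exactly (\ref{llinf}). Applying (\ref{llinf}) on successive intervals of the form $[T-\tau,T]$ and combining with \textbf{P}$(\delta,\gamma)$ under each of the cases (i)--(iii) immediately yields the corresponding \textbf{P}$(\infty,\infty)$ bound.

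The main obstacle is the interplay between the degeneracy and the dynamic boundary condition. The natural auxiliary variable $w_i = (u_i-k)_+^{p_i/2+1}$ lives in $H^1(\Omega)$, but its trace on $\Gamma$ obeys a different Sobolev exponent than its interior norm; simultaneously absorbing the $|u_i|^{\theta_i}$ bulk source and the $|u_i|^{\beta_i}$ boundary source into a single gradient quantity requires a careful Young's inequality whose weights are calibrated to the pair $(\delta,\gamma)$ in (\ref{exp}). Ensuring that the time-derivative contribution on $\Gamma$ and the cutoff-derivative term $R_\zeta$ do not degrade the iteration exponent $\kappa$ is the delicate point that forces the exact form of $\delta$ and $\gamma$, namely the maximum over $2$, $\theta_i+1$ and $p_i/2+1$ (respectively, $\beta_i+1$ and $p_i/2+1$).
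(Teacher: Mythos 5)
Your proposal is correct and follows essentially the same route as the paper: a DeGiorgi truncation with levels $k_n=L(2-2^{-n})$ on shrinking time intervals, the energy inequality obtained by testing the bulk equation and the dynamic boundary condition with the truncation times a temporal cutoff, the parabolic Sobolev/trace embeddings applied to $(u_i-k_n)_+^{p_i/2+1}$, and the fast-decay recursion $\mathcal{Y}_{n+1}\leq Cb^n\mathcal{Y}_n^{1+\kappa}$ with $C\sim L^{-\sigma}$, closed by choosing $L$ proportional to the $L^{\delta}$/$L^{\gamma}$ norms. The only cosmetic differences are that the paper normalizes $\mathcal{Y}_n$ by $|Q_n|$ and obtains the lower bound by applying the result to $-u_i$ rather than by a symmetric test function, neither of which changes the argument.
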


We will now show how to deduce the property \textbf{P}$\left(
s_{1},s_{2}\right) ,$ for some finite $s_{1},s_{2}\geq 1,$ for the problem (%
\ref{s1})-(\ref{s2}) subject to a dynamic boundary condition of the form (%
\ref{s4}). We shall first consider a special case. Let $\Gamma \subset
\mathbb{R}^{N-1}$\ consists of two disjoint open subsets $\Gamma _{1}$\ and $%
\Gamma _{2}$, each $\overline{\Gamma }_{i}\backslash \Gamma _{i}$\ is a $S$%
-null subset of $\Gamma $\ and $\Gamma =\overline{\Gamma }_{1}\cup \overline{%
\Gamma }_{2}$, such that $u_{i},$ \thinspace $i\in \mathbb{N}_{m}$ satisfy (%
\ref{s3}) on $\Gamma _{1}\times \left( 0,\infty \right) ,$ and%
\begin{equation}
u_{i}=0\text{, on }\Gamma _{2}\times \left( 0,\infty \right) ,\text{ for }%
i\in \mathbb{N}_{m}.  \label{s5}
\end{equation}%
We assume that $\Gamma _{2}$ is a set of positive surface measure, and that
the nonlinearities $f_{i},g_{i}$ satisfy the following special form of (\ref%
{A2}), that is,%
\begin{equation}
\left\{
\begin{array}{l}
\sum\nolimits_{i\in \mathbb{N}_{m}}f_{i}\left( x,t,s_{1},...,s_{m}\right)
s_{i}\left\vert s_{i}\right\vert ^{m_{i}-2}\geq -\sum\nolimits_{i\in \mathbb{%
N}_{m}}C_{f_{i}}\left\vert s_{i}\right\vert ^{m_{i}}-\widetilde{C}_{f}, \\
\sum\nolimits_{i\in \mathbb{N}_{m}}g_{i}\left( x,t,s_{1},...,s_{m}\right)
s_{i}\left\vert s_{i}\right\vert ^{m_{i}-2}\geq -\sum\nolimits_{i\in \mathbb{%
N}_{m}}C_{g_{i}}\left\vert s_{i}\right\vert ^{m_{i}}-\widetilde{C}_{g},%
\end{array}%
\right.  \label{A3}
\end{equation}%
for some $m_{i}\geq 1,$ and some positive constants $C_{f_{i}},C_{g_{i}}$
and $\widetilde{C}_{f},\widetilde{C}_{g}\geq 0.$ Note that (\ref{A2}) is
equivalent to (\ref{A3}) for $m_{i}=2.$

The second main result gives a $\mathcal{X}^{\overrightarrow{r}}$%
-dissipative estimate for solutions of (\ref{s1}), (\ref{s3}), (\ref{s4}), (%
\ref{s5}).

\begin{theorem}
\label{diss}Suppose $\Gamma _{2}$ is a set of positive surface measure. Let
the assumptions (\ref{A1}), (\ref{A3}) be satisfied, and let $p_{i}>0$ for
all $i\in \mathbb{N}_{m}$. Then, the system (\ref{s1}), (\ref{s3}), (\ref{s4}%
), (\ref{s5}) satisfies property \textbf{P}$\left( \overrightarrow{r}\right)
$-(i) for $\overrightarrow{r}=\left( r_{1},...,r_{m}\right) $ with $%
r_{i}=m_{i},$ $i\in \mathbb{N}_{m}.$ Moreover, if $\overrightarrow{u}_{0}\in
\mathcal{X}^{\overrightarrow{r}},$ then there also exists a positive
function $Q$, independent of initial data and time, such that%
\begin{equation*}
\sup_{t\geq 0}\left\Vert \overrightarrow{u}\left( t\right) \right\Vert _{%
\mathcal{X}^{\overrightarrow{r}}}\leq Q\left( \left\Vert \overrightarrow{u}%
_{0}\right\Vert _{\mathcal{X}^{\overrightarrow{r}}}\right) .
\end{equation*}
\end{theorem}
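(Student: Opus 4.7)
The plan is to run a classical energy estimate with the test function $u_i|u_i|^{m_i-2}$, and then exploit the positive-measure Dirichlet patch $\Gamma_2$ to turn the nonlinear diffusion into a super-linear dissipation on the functional
\[
Y(t)=\sum_{i\in\mathbb{N}_m}\frac{1}{m_i}\!\left(\|u_i(t)\|_{L^{m_i}(\Omega)}^{m_i}+\delta_i\|u_i(t)\|_{L^{m_i}(\Gamma_1)}^{m_i}\right).
\]
Testing \eqref{s1} by $u_i|u_i|^{m_i-2}$, integrating by parts, replacing the boundary flux $\partial_{\mathbf{n}}(A_i(u_i))$ on $\Gamma_1$ via \eqref{s3}, and using \eqref{s5} on $\Gamma_2$ (so that the test function vanishes there), I get, after summing over $i$,
\[
\tfrac{d}{dt}Y+\sum_{i}(m_i-1)\!\int_{\Omega}a_i(u_i)|u_i|^{m_i-2}|\nabla u_i|^2\,dx+\sum_{i}\!\int_{\Omega}f_i u_i|u_i|^{m_i-2}dx+\sum_{i}\!\int_{\Gamma_1}g_i u_i|u_i|^{m_i-2}dS=0.
\]
By \eqref{A1} with $p_i>0$ and the identity $|u_i|^{p_i+m_i-2}|\nabla u_i|^2=\tfrac{4}{(p_i+m_i)^2}|\nabla w_i|^2$ for $w_i:=|u_i|^{(p_i+m_i)/2}$, the diffusion term dominates $\sum_i c_i\|\nabla w_i\|_{L^2(\Omega)}^2$; the source hypothesis \eqref{A3} controls the $f_i,g_i$ contributions from below by $-\sum_i C_{f_i}\|u_i\|_{L^{m_i}(\Omega)}^{m_i}-\sum_i C_{g_i}\|u_i\|_{L^{m_i}(\Gamma_1)}^{m_i}-C$.

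The crucial step is to convert $\|\nabla w_i\|_{L^2(\Omega)}^2$ into a coercive control of $\|u_i\|_{L^{p_i+m_i}(\Omega)}^{p_i+m_i}+\|u_i\|_{L^{p_i+m_i}(\Gamma_1)}^{p_i+m_i}$. Here I use that $w_i=0$ on $\Gamma_2$ (a set of positive surface measure), so a Poincaré inequality on $\{v\in H^1(\Omega):v_{|\Gamma_2}=0\}$ plus the trace inequality yield
\[
\|\nabla w_i\|_{L^2(\Omega)}^2\ \geq\ c\!\left(\|w_i\|_{L^2(\Omega)}^2+\|w_i\|_{L^2(\Gamma_1)}^2\right)=c\!\left(\|u_i\|_{L^{p_i+m_i}(\Omega)}^{p_i+m_i}+\|u_i\|_{L^{p_i+m_i}(\Gamma_1)}^{p_i+m_i}\right).
\]
Since $p_i+m_i>m_i$, Hölder's inequality gives $\|u_i\|_{L^{m_i}}^{m_i}\leq C|\cdot|^{p_i/(p_i+m_i)}\|u_i\|_{L^{p_i+m_i}}^{m_i}$, and Young's inequality (with exponent strictly less than one) absorbs all the lower-order $C_{f_i}\|u_i\|_{L^{m_i}(\Omega)}^{m_i}$ and $C_{g_i}\|u_i\|_{L^{m_i}(\Gamma_1)}^{m_i}$ contributions into the dissipation, leaving a constant remainder. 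Hence
\[
\tfrac{d}{dt}Y+c'\!\sum_i\!\left(\|u_i\|_{L^{p_i+m_i}(\Omega)}^{p_i+m_i}+\|u_i\|_{L^{p_i+m_i}(\Gamma_1)}^{p_i+m_i}\right)\leq C.
\]

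To close the inequality in terms of $Y$ alone, I again invoke Hölder in the opposite direction: $\|u_i\|_{L^{m_i}}^{m_i}\leq C\|u_i\|_{L^{p_i+m_i}}^{m_i}$ implies $\|u_i\|_{L^{p_i+m_i}}^{p_i+m_i}\geq c\,\|u_i\|_{L^{m_i}}^{m_i(p_i+m_i)/m_i}$, so each term on the left bounds below a power $Y_i^{1+p_i/m_i}$. With $\theta:=\min_i (p_i/m_i)>0$ and the elementary inequality $\sum_i Y_i^{1+\theta}\geq m^{-\theta}Y^{1+\theta}$, I obtain the scalar ODE
\[
\tfrac{d}{dt}Y+\kappa\,Y^{1+\theta}\leq C,\qquad\kappa>0.
\]
A standard comparison (solving $y'=-\kappa y^{1+\theta}+C$) then yields both $\limsup_{t\to\infty}Y(t)\leq(C/\kappa)^{1/(1+\theta)}$ and, by concavity of the resulting barrier, $\sup_{t\geq\eta}Y(t)\leq Q(\eta)$ independent of $\overrightarrow{u}_0$ — which is exactly \textbf{P}$(\overrightarrow{r})$-(i) — while for $\overrightarrow{u}_0\in\mathcal{X}^{\overrightarrow{r}}$ the comparison also gives the uniform bound $\sup_{t\geq 0}Y(t)\leq\max\{Y(0),(C/\kappa)^{1/(1+\theta)}\}$.

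The principal technical obstacle is justifying the formal test-function computation for genuine weak solutions of the degenerate system: the map $u_i\mapsto u_i|u_i|^{m_i-2}$ need not preserve the natural energy space when $m_i<2$, and boundary traces of $A_i(u_i)$ on $\Gamma_1$ must be handled so that the integration by parts and the substitution of \eqref{s3} are licit. I would address this by truncating $u_i$ at level $k$, applying the argument to $T_k(u_i)|T_k(u_i)|^{m_i-2}$ within a Galerkin/Faedo approximation, and passing to the limit $k\to\infty$ — an adaptation of the machinery already developed (in a different context) for Theorem~\ref{linf3}; the key point is that all constants in the differential inequality above are independent of the approximation parameter thanks to \eqref{A1}, \eqref{A3} and the Poincaré inequality enabled by the Dirichlet patch $\Gamma_2$.
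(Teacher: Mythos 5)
Your argument is correct in substance but takes a genuinely different route from the paper's. The paper tests (\ref{s1}) and (\ref{s3}) against $\left\vert u_{i}\right\vert ^{m_{i}-1}\mathrm{sgn}\left( u_{i}\right) \varphi _{1}$, where $\varphi _{1}>0$ is the principal eigenfunction of the Wentzell-type spectral problem (\ref{sp1})--(\ref{sp3}); it keeps but never uses the quadratic gradient term, and instead extracts the coercive quantity $\Lambda _{1}\int_{\overline{\Omega }}\left\vert u_{i}\right\vert ^{m_{i}+p_{i}}\varphi _{1}d\mu $ from the first-order term $\int_{\Omega }a_{i}\left( u_{i}\right) \left\vert u_{i}\right\vert ^{m_{i}-1}\mathrm{sgn}\left( u_{i}\right) \nabla u_{i}\cdot \nabla \varphi _{1}dx$ by a second integration by parts ((\ref{p4})--(\ref{p5})), then normalizes $\varphi _{1}d\mu $ to a probability measure and applies Jensen's inequality to reach the same ODE $Y^{\prime }+c_{1}Y^{\nu }\leq c_{2}$ that you obtain. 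You instead retain the gradient term, rewrite it as $c\left\Vert \nabla \left\vert u_{i}\right\vert ^{\left( m_{i}+p_{i}\right) /2}\right\Vert _{L^{2}\left( \Omega \right) }^{2}$, and get coercivity from the Poincar\'{e}--Friedrichs inequality on $\left\{ v\in H^{1}\left( \Omega \right) :v_{\mid \Gamma _{2}}=0\right\} $ combined with the trace theorem; the positive measure of $\Gamma _{2}$ enters there, whereas in the paper it enters through $\Lambda _{1}>0$ and $\varphi _{1}>0$ on $\overline{\Omega }$. Your route is more elementary (no spectral problem, no maximum principle, no Jensen) and avoids the pointwise sign manipulation implicit in (\ref{p4}). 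Two caveats. First, your dissipation carries the factor $m_{i}-1$, so the argument degenerates when $m_{i}=1$, a value permitted by (\ref{A3}); the paper's eigenfunction-weight device still yields coercivity there because the first-order term survives (with $\overline{a}\left( u\right) \geq c\left\vert u\right\vert ^{1+p_{i}}$), so for full generality you would need to restrict to $m_{i}>1$ or add the weighted test function for those indices. Second, the step $\sum_{i}Y_{i}^{1+p_{i}/m_{i}}\geq m^{-\theta }Y^{1+\theta }$ with $\theta =\min_{i}\left( p_{i}/m_{i}\right) $ fails when some $Y_{i}<1$; replace it by $Y_{i}^{1+p_{i}/m_{i}}\geq Y_{i}^{1+\theta }-1$ and absorb the extra constant into the right-hand side. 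Both points are repairable and do not affect the main thrust; the concluding comparison argument and the remark on justifying the computation via a Galerkin/truncation scheme match the paper's (\ref{p9})--(\ref{p10}) and the discussion at the start of its Section 4.3.
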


\begin{remark}
Theorem \ref{diss} \emph{only} holds if $\Gamma \neq \Gamma _{1}$, i.e.,
when the boundary $\Gamma _{2}$ has positive measure (cf. also Remark \ref%
{remi} below). We require different arguments for the case when $\Gamma
_{2}\equiv \varnothing $.
\end{remark}

We shall now derive another dissipative estimate for solutions of (\ref{s1}%
)-(\ref{s4}) in $\mathcal{X}^{\overrightarrow{r}}$-norm which also covers
Dirichlet boundary conditions (for $i\in I_{m}$) and applies to the case
when $\Gamma _{2}\equiv \varnothing $, without enforcing any further sign
restrictions on all the $p_{i}$'s (compare with the assumptions in Theorem %
\ref{diss}). Analogous to (\ref{A3}), we shall assume that the functions $%
f_{i},g_{i}$ satisfy%
\begin{equation}
\left\{
\begin{array}{l}
\sum\nolimits_{i\in \mathbb{N}_{m}}f_{i}\left( x,t,s_{1},...,s_{m},\zeta
_{i}\right) s_{i}\left\vert s_{i}\right\vert ^{m_{i}-2}\geq
-\sum\nolimits_{i\in \mathbb{N}_{m}}C_{f_{i}}\left\vert s_{i}\right\vert
^{m_{i}+p_{i}}-\widetilde{C}_{f}, \\
\sum\nolimits_{i\in \mathbb{N}_{m}}g_{i}\left( x,t,s_{1},...,s_{m}\right)
s_{i}\left\vert s_{i}\right\vert ^{m_{i}-2}\geq -\sum\nolimits_{i\in \mathbb{%
N}_{m}}C_{g_{i}}\left\vert s_{i}\right\vert ^{m_{i}+p_{i}}-\widetilde{C}_{g},%
\end{array}%
\right.  \label{A3bis}
\end{equation}%
for some $m_{i}>1,$ and some \emph{real} constants $C_{f_{i}},C_{g_{i}}$ and
$\widetilde{C}_{f},\widetilde{C}_{g}\geq 0.$ Moreover, consider the
(self-adjoint) eigenvalue problem for so-called Wentzell Laplacians $\Delta
_{W,i}$\ (see \cite[Appendix]{GalNS}), as follows:%
\begin{equation}
-a_{i}\Delta \varphi _{i}-C_{f_{i}}\varphi _{i}=\Lambda _{i}\varphi \text{
in }\Omega ,  \label{wi}
\end{equation}%
where%
\begin{equation*}
a_{i}:=\alpha _{i}\left( m_{i}-1\right) \left( \frac{2}{m_{i}+p_{i}}\right)
^{2},
\end{equation*}%
with a boundary condition that depends on the eigenvalue $\Lambda _{i}$
explicitly,%
\begin{equation}
a_{i}\partial _{\mathbf{n}}\varphi _{i}-C_{g_{i}}\varphi _{i}=\Lambda
_{i}\varphi _{i}\text{ on }\Gamma _{1},  \label{wib1}
\end{equation}%
such that%
\begin{equation}
\varphi _{i}=0\text{ on }\Gamma _{2}.  \label{wib2}
\end{equation}%
Here $\Gamma _{2}$ is assumed to be a set of nonnegative surface measure
(the case $\Gamma _{2}=\varnothing $ may be also allowed). Our second
condition on the nonlinearities $f_{i},$ $g_{i}$ is concerned with some sign
assumptions on $C_{f_{i}}$ and $C_{g_{i}}$. In particular, we assume that%
\begin{equation}
\Lambda _{1}:=\inf_{i\in \mathbb{N}_{m}}\Lambda _{1,i}>0.  \label{eigen}
\end{equation}%
Observe that, if $C_{f_{i}}<0,$ $C_{g_{i}}<0,$ for all $i\in \mathbb{N}_{m},$
then the first eigenvalue $\Lambda _{1,i}$ of (\ref{wi})-(\ref{wib2}) is
always positive, for any $i\in \mathbb{N}_{m}.$ Otherwise, we note that even
when at least one of $C_{f_{i}}$ or $C_{g_{i}}$ is positive, it may still
happen that (\ref{eigen}) holds. In this sense, our system (\ref{s1})-(\ref%
{s4}) becomes dissipative even when at least one of the terms $f_{i},g_{i}$
has the wrong sign at infinity, but the bad sign is compensated by the other
term (see, also \cite{Ma} for similar assumptions).

\begin{theorem}
\label{diss2}Suppose that $\Gamma _{2}$ is a set of nonnegative surface
measure. Let (\ref{A1}), (\ref{A3bis}), (\ref{eigen}) hold, and assume that $%
p_{i}>0$ for all $i\in \mathbb{N}_{m}$. Then, the system (\ref{s1}), (\ref%
{s4}), (\ref{s5}) satisfies property \textbf{P}$\left( \overrightarrow{r}%
\right) $-(i), for $\overrightarrow{r}=\left( r_{1},...,r_{m}\right) $ with $%
r_{i}=m_{i},$ $i\in \mathbb{N}_{m}.$ On the other hand, if $p_{i}=0$ for
some $i\in \mathbb{N}_{m}$, and if $\overrightarrow{u}_{0}\in \mathcal{X}^{%
\overrightarrow{r}},$ then this system satisfies \textbf{P}$\left(
\overrightarrow{r}\right) $-(ii) instead.
\end{theorem}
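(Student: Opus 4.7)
The plan is to run an $L^{m_i}$-power energy estimate with a single test function that simultaneously captures the bulk and boundary mass of $u_i$, and then close the estimate by invoking the Wentzell spectral bound (\ref{eigen}). Concretely, I would multiply (\ref{s1}) by $u_i|u_i|^{m_i-2}$ and integrate over $\Omega$, and add to this the dynamic boundary equation (\ref{s3}) tested by the same function and integrated over $\Gamma_1$. The Dirichlet condition (\ref{s5}) kills any contribution from $\Gamma_2$, and the normal flux produced by integration by parts in the bulk cancels exactly against the flux appearing in (\ref{s3}). This yields the identity
\[
\frac{1}{m_i}\frac{d}{dt}\Bigl[\|u_i\|_{L^{m_i}(\Omega)}^{m_i}+\delta_i\|u_i\|_{L^{m_i}(\Gamma_1)}^{m_i}\Bigr] + (m_i-1)\!\int_\Omega\! a_i(u_i)|u_i|^{m_i-2}|\nabla u_i|^2dx + \int_\Omega\! f_i u_i|u_i|^{m_i-2}dx + \int_{\Gamma_1}\! g_i u_i|u_i|^{m_i-2}dS = 0,
\]
formal at first but made rigorous by a standard regularization of $s\mapsto s|s|^{m_i-2}$ combined with a Galerkin truncation scheme adapted to the degeneracy.

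Next, using (\ref{A1}) together with the chain-rule identity $|u_i|^{m_i+p_i-2}|\nabla u_i|^2=\bigl(\tfrac{2}{m_i+p_i}\bigr)^2|\nabla w_i|^2$, with $w_i:=|u_i|^{(m_i+p_i)/2}$, the bulk diffusion term is bounded below by $a_i\|\nabla w_i\|_{L^2(\Omega)}^2$, where $a_i$ is the very constant appearing in (\ref{wi}). Applying the sign hypothesis (\ref{A3bis}) to the source contributions and summing over $i\in\mathbb{N}_m$, I arrive at
\[
\frac{d\mathcal{E}}{dt}(t) + \sum_{i\in\mathbb{N}_m}\Bigl[a_i\|\nabla w_i\|_{L^2(\Omega)}^2 - C_{f_i}\|w_i\|_{L^2(\Omega)}^2 - C_{g_i}\|w_i\|_{L^2(\Gamma_1)}^2\Bigr] \le \widetilde{C},
\]
where $\mathcal{E}(t):=\sum_i\tfrac{1}{m_i}\bigl(\|u_i\|_{L^{m_i}(\Omega)}^{m_i}+\delta_i\|u_i\|_{L^{m_i}(\Gamma_1)}^{m_i}\bigr)$ and $\widetilde{C}$ absorbs $\widetilde{C}_f|\Omega|+\widetilde{C}_g|\Gamma_1|$.

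The decisive observation is that the bracketed quantity is precisely the Wentzell quadratic form attached to (\ref{wi})--(\ref{wib2}) evaluated at $w_i$. Since $u_i=0$ on $\Gamma_2$ by (\ref{s5}), the trace $w_i|_{\Gamma_2}$ vanishes, so $w_i$ lies in the natural form domain $\{v\in H^1(\Omega):v|_{\Gamma_2}=0\}$. The variational characterization of the first eigenvalue $\Lambda_{1,i}$ in the pivot space $\mathbb{X}^2$, combined with (\ref{eigen}), then gives
\[
\frac{d\mathcal{E}}{dt}(t) + \Lambda_1 \sum_{i\in\mathbb{N}_m}\Bigl(\|u_i\|_{L^{m_i+p_i}(\Omega)}^{m_i+p_i} + \|u_i\|_{L^{m_i+p_i}(\Gamma_1)}^{m_i+p_i}\Bigr) \le \widetilde{C}.
\]

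If $p_i>0$ for every $i$, H\"older's inequality on the finite-measure sets $\Omega$ and $\Gamma_1$ bounds the dissipative sum from below by $c\,\mathcal{E}(t)^{1+\sigma}$ with $\sigma=\min_i(p_i/m_i)>0$, and a comparison with the Bernoulli-type ODE $y'+cy^{1+\sigma}\le\widetilde{C}$ delivers an absorbing bound $\sup_{t\ge\eta}\mathcal{E}(t)\le Q(\eta)$ with $Q$ independent of initial data, i.e.\ Property \textbf{P}$(\overrightarrow{r})$-(i). If instead $p_i=0$ for some $i$, the corresponding contribution is only linear in $\mathcal{E}$, so Gr\"onwall's lemma yields only the asymptotic bound $\limsup_{t\to\infty}\mathcal{E}(t)\le\widetilde{C}/\Lambda_1$, i.e.\ Property \textbf{P}$(\overrightarrow{r})$-(ii), while the same lemma produces a uniform-in-time bound $\sup_{t\ge 0}\mathcal{E}(t)\le Q(\mathcal{E}(0))$ when $\overrightarrow{u}_0\in\mathcal{X}^{\overrightarrow{r}}$. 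The main obstacle is twofold: justifying the formal test computation for weak solutions of the fully degenerate system (the degeneracy $a_i(s)\sim\alpha_i|s|^{p_i}$ forces careful approximation to make sense of $|u_i|^{m_i-2}\nabla u_i$ near $\{u_i=0\}$), and verifying that $w_i\in H^1(\Omega)$ with vanishing trace on $\Gamma_2$ so that the Wentzell spectral inequality is legitimate; once these two technical points are handled, the rest is Gr\"onwall/H\"older bookkeeping.
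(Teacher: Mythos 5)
Your proposal is correct and follows essentially the same route as the paper: testing with $u_i|u_i|^{m_i-2}$, rewriting the degenerate diffusion via $w_i=|u_i|^{(m_i+p_i)/2}$, invoking the variational form of the Wentzell eigenvalue problem (\ref{wi})--(\ref{wib2}) with (\ref{eigen}) to absorb the $C_{f_i},C_{g_i}$ terms, and closing with a Bernoulli-type ODE when all $p_i>0$ versus linear Gr\"onwall when some $p_i=0$. Your exponent $1+\sigma$ with $\sigma=\min_i(p_i/m_i)$ is in fact the correct one (the paper's $\nu=\min_i(m_i/p_i)+1$ appears to be a slip), and your flagged technical points (regularization of the test function, trace of $w_i$ on $\Gamma_2$) match the paper's appeal to a Galerkin scheme.
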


\begin{remark}
All the above results can be extended to models which also incorporate
advection effects in the domain $\Omega $ and on the boundary $\Gamma $
(i.e., the reaction terms $f_{i},g_{i}$ and $h_{i}$ may also depend on $%
\nabla u_{i}$ and $\nabla _{\Gamma }u_{i}$, respectively). Indeed, by making
similar assumptions to (\ref{A2})-(\ref{A3bis}), any integral over $%
\left\vert \nabla u_{i}\right\vert $ may be absorbed by diffusion in the
bulk $\Omega $ with an appropiate application of suitable Young and Sobolev
inequalities (cf. Section 4). We will return to these questions elsewhere.
\end{remark}

\section{Proof of main results}

\subsection{Proof of Theorem \protect\ref{linf}}

In order to justify our computations, we need to construct an approximation
scheme which relies on the existence of classical (smooth)\ solutions to the
non-degenerate analogue of (\ref{s1})-(\ref{s4}) (if at least one $p_{i}\neq
0$). One of the advantages of this construction is that now every (possibly,
very weak)\ solution can be approximated by regular ones and the
justification of our estimates for such solutions is immediate. To this end,
for each $\epsilon >0$, let us consider the following non-degenerate
parabolic system:%
\begin{equation}
\partial _{t}u_{i}-\text{div}\left( a_{i}^{\epsilon }\left( u_{i}\right)
\nabla u_{i}\right) +f_{i}\left( x,t,\overrightarrow{u}\right) =0\text{, in }%
\Omega \times \left( 0,\infty \right) ,  \label{s1c}
\end{equation}%
for $i=1,...,m,$ where $a_{i}^{\epsilon }\left( s_{i}\right) :=a_{i}\left(
s_{i}+\epsilon \right) \geq O\left( \epsilon \right) >0,$ subject to the
following set of boundary conditions%
\begin{equation}
\partial _{\mathbf{n}}u_{i}+h_{i}\left( x,t,\overrightarrow{u}\right) =0,%
\text{ on }\Gamma \times \left( 0,\infty \right) ,\text{ \thinspace }i\in
I_{m}  \label{s2c}
\end{equation}%
and%
\begin{equation}
\delta _{i}\partial _{t}u_{i}+a_{\epsilon }\left( u_{i}\right) \partial _{%
\mathbf{n}}\left( u_{i}\right) +g_{i}\left( x,t,\overrightarrow{u}\right) =0,%
\text{ on }\Gamma \times \left( 0,\infty \right) ,\text{ \thinspace }i\in
J_{m}.  \label{s3c}
\end{equation}%
We equip the system (\ref{s1})-(\ref{s3}) with the initial conditions%
\begin{equation}
u_{i\mid t=0}^{\epsilon }=u_{i0}^{\epsilon }\text{ in }\Omega ,\text{ }%
u_{i\mid t=0}^{\epsilon }=u_{i0}^{\epsilon }\text{ on }\Gamma ,  \label{s4c}
\end{equation}%
for $u_{i}^{\epsilon }\left( 0\right) :=u_{i0}^{\epsilon }\in C^{\infty
}\left( \overline{\Omega }\right) ,$ $i\in \mathbb{N}_{m}$, such that%
\begin{equation*}
u_{i}^{\epsilon }\left( 0\right) \rightarrow u_{i0}\text{ in }L^{s}\left(
\Omega \right) \text{, }u_{i}^{\epsilon }\left( 0\right) _{\mid \Gamma
}\rightarrow v_{i0}\text{ in }L^{s}\left( \Gamma \right) ,
\end{equation*}%
for some given $s\geq 1.$ Then, the approximate problem (\ref{s1c})-(\ref%
{s4c}) admits a unique (smooth) classical solution with%
\begin{equation*}
\overrightarrow{u}^{\epsilon }=\left( u_{1}^{\epsilon },...,u_{m}^{\epsilon
}\right) \in C^{1}\left( \left[ 0,t_{\ast }\right] ;\left( C^{\infty }\left(
\overline{\Omega }\right) \right) ^{m}\right) ,
\end{equation*}%
for some $t_{\ast }>0$ and each $\epsilon >0$ (see \cite{Escher, Escher2,
CJ, Ma}). Being pedants, we cannot apply the main results of \cite{Escher,
Ma} (cf. also \cite{Escher2}) directly to equations (\ref{s1c})-(\ref{s4c})
since the functions $a_{i}^{\epsilon },$ $f_{i},$ $g_{i}$ and $h_{i}$ are
not smooth enough. Moreover, the solutions constructed this way may only
exists locally in time for some interval $\left[ 0,t_{\ast }\right] $.
However, by approximating the functions $a_{i}^{\epsilon },$ $f_{i},$ $g_{i}$%
, $h_{i}$ by smooth ones, say, in $C^{\infty }\left( \mathbb{R},\mathbb{R}%
\right) $, we may apply Remark \ref{rem} below for the solutions of the
approximate equations (\ref{s1c})-(\ref{s4c}), and deduce the existence of a
globally well-defined solution on $\left[ 0,T\right] $, for any $T>0$.
Indeed, an apriori global bound in H\"{o}lder-norm for $\overrightarrow{u}%
^{\epsilon }$ guarantees the global existence of classical solutions (see,
e.g., \cite{Ma}). Nevertheless, even when these bounds are not available
apriori, we may still choose to work with locally-defined in $\left[
0,t_{\ast }\right] ,$ smooth solutions $\overrightarrow{u}^{\epsilon }$,
that are globally defined on $\mathbb{R}_{+}$ in the lower-order $L^{p}$%
-norms; this turns out to be sufficient for our purposes. Indeed, if the
solution $\overrightarrow{u}^{\epsilon }$ is globally-defined on $\left[ 0,T%
\right] $ in $\mathcal{X}^{r}$-norm, then it will also be global in $%
\mathcal{X}^{\infty }$-norm by (iii). As we shall see in this section, a
global bound in $\mathcal{X}^{r}$-norm can be established for the same
assumptions (\ref{A1})-(\ref{A3bis}) on the nonlinearities.

We begin with the proof of Theorem \ref{linf}, by following similar
arguments to \cite{Du, Du3} for the system (\ref{s1}) with static boundary
conditions. From now on, $c$ will denote a positive constant that is
independent of $t,$ $\epsilon $, $n,$ $\overrightarrow{u}$ and initial data,
which only depends on the other structural parameters of the problem. Such a
constant may vary even from line to line. Moreover, we shall denote by $%
Q_{\tau }\left( n\right) $ a monotone nondecreasing function in $n$ of order
$\tau ,$ for some nonnegative constant $\tau ,$ independent of $n.$ More
precisely, $Q_{\tau }\left( n\right) \sim cn^{\tau }$ as $n\rightarrow
+\infty .$ We begin by showing that the $\mathcal{X}^{n}$-norm of $%
\overrightarrow{u}=\overrightarrow{u}^{\epsilon }$ satisfies a local
recursive relation which can be used to perform an iterative argument. We
divide the proof of Theorem \ref{linf} into several steps.

\noindent \textbf{Step 1} (The basic energy estimate in $\mathcal{X}^{n+1}$%
). We multiply (\ref{s1c}) by $\left\vert u_{i}\right\vert ^{n-1}u_{i},$ $%
n\geq 1,$ and integrate over $\Omega $, for each $i\in \mathbb{N}_{m}$. We
obtain%
\begin{align}
& \frac{1}{\left( n+1\right) }\frac{d}{dt}\left\Vert u_{i}\right\Vert
_{L^{n+1}\left( \Omega \right) }^{n+1}+\left\langle f_{i}\left( x,t,%
\overrightarrow{u}\right) ,\left\vert u_{i}\right\vert
^{n-1}u_{i}\right\rangle _{L^{2}\left( \Omega \right) }  \label{eqn2} \\
& +n\int_{\Omega }a_{i}^{\epsilon }\left( u_{i}\right) \left\vert \nabla
u_{i}\right\vert ^{2}\left\vert u_{i}\right\vert ^{n-1}dx  \notag \\
& =\int_{\Gamma }a_{i}^{\epsilon }\left( u_{i}\right) \partial _{\mathbf{n}%
}u_{i}\left\vert u_{i}\right\vert ^{n-1}u_{i}dS.  \notag
\end{align}%
Similarly, we multiply (\ref{s2c}) and (\ref{s3c}) by $\left\vert
u_{i}\right\vert ^{n-1}u_{i}$ and integrate each relation over $\Gamma $. We
have%
\begin{align}
& \frac{\delta _{i}}{\left( n+1\right) }\frac{d}{dt}\left\Vert
u_{i}\right\Vert _{L^{n+1}\left( \Gamma \right) }^{n+1}+\int_{\Gamma
}a_{i}^{\epsilon }\left( u_{i}\right) \partial _{\mathbf{n}}u_{i}\left\vert
u_{i}\right\vert ^{n-1}u_{i}dS  \label{eqn2bis} \\
& +\left\langle g_{i}\left( x,t,\overrightarrow{u}\right) ,\left\vert
u_{i}\right\vert ^{n-1}u_{i}\right\rangle _{L^{2}\left( \Gamma \right) }
\notag \\
& =0,  \notag
\end{align}%
for each $i\in J_{m}$, and%
\begin{equation}
\int_{\Gamma }a_{i}^{\epsilon }\left( u_{i}\right) \partial _{\mathbf{n}%
}u_{i}\left\vert u_{i}\right\vert ^{n-1}u_{i}dS+\left\langle a_{i}^{\epsilon
}\left( u_{i}\right) h_{i}\left( x,t,\overrightarrow{u}\right) ,\left\vert
u_{i}\right\vert ^{n-1}u_{i}\right\rangle _{L^{2}\left( \Gamma \right) }=0,%
\text{ }i\in I_{m}.  \label{eqn3tris}
\end{equation}%
In the case when (\ref{s2}) is replaced by a Dirichlet boundary condition
for $u_{i},$ $i\in I_{m},$ equation (\ref{eqn3tris}) still holds since $%
h_{i}\equiv 0$ in that case.

Let us first observe that, in light of assumption (\ref{A1}), we have $%
a_{i}^{\epsilon }\left( s_{i}\right) \geq \alpha _{i},$ $\forall s_{i}\in
\mathbb{R}$, $\epsilon >0$ (recall that $p_{i}=0$), which immediately implies%
\begin{equation}
\int_{\Omega }a_{i}^{\epsilon }\left( u_{i}\right) \left\vert \nabla
u_{i}\right\vert ^{2}\left\vert u_{i}\right\vert ^{n-1}dx\geq \alpha
_{i}\int_{\Omega }\left\vert \nabla u_{i}\right\vert ^{2}\left\vert
u_{i}\right\vert ^{n-1}dx,\text{ }i\in \mathbb{N}_{m}.  \label{eqn3b}
\end{equation}%
Moreover, on account of of the assumptions (\ref{A2}) for $f_{i}$, $g_{i}$
and a basic application of H\"{o}lder and Young inequalities, we deduce%
\begin{equation}
\sum\nolimits_{i\in \mathbb{N}_{m}}\left\langle f_{i}\left( x,t,%
\overrightarrow{u}\right) ,\left\vert u_{i}\right\vert
^{n-1}u_{i}\right\rangle _{L^{2}\left( \Omega \right) }\geq
-c\sum\nolimits_{i\in \mathbb{N}_{m}}\left\Vert u_{i}\right\Vert
_{L^{n+1}\left( \Omega \right) }^{n+1}-c,  \label{ineqf}
\end{equation}%
and%
\begin{equation}
\sum\nolimits_{i\in J_{m}}\left\langle g_{i}\left( x,t,\overrightarrow{u}%
\right) ,\left\vert u_{i}\right\vert ^{n-1}u_{i}\right\rangle _{L^{2}\left(
\Gamma \right) }\geq -c\sum\nolimits_{i\in J_{m}}\delta _{i}\left\Vert
u_{i}\right\Vert _{L^{n+1}\left( \Gamma \right) }^{n+1}-c.  \label{ineqg}
\end{equation}

In order to estimate the source terms involving $h_{i}$ on the boundary (\ref%
{eqn3tris}), we need the following lemma which allows us to control surface
integrals in terms of volume integrals (see, e.g., \cite{G0}).

\begin{lemma}
\label{boundary}Let $n\geq 1$, $p\geq 0,$ $s>-1.$ Then for every $%
\varepsilon >0,$ there holds%
\begin{equation*}
\int_{\Gamma }\left\vert u\right\vert ^{s+n}dS\leq \varepsilon \left(
s+n\right) \int_{\Omega }\left\vert \nabla u\right\vert ^{2}\left\vert
u\right\vert ^{p+n-1}dx+\frac{C}{\varepsilon }\left( s+n\right) \left(
\left\Vert u\right\Vert _{L^{s+n}\left( \Omega \right) }^{s+n}+1\right) ,
\end{equation*}%
for some positive constant $C=C\left( p,s\right) $ independent of $u,$ $%
\varepsilon $ and $n.$
\end{lemma}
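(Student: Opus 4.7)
The plan is to convert the surface integral into a volume integral via the divergence theorem, then extract the desired weighted gradient term via Young's inequality, and finally estimate the leftover volume integral by a second Young's inequality.

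Since $\Gamma$ is of class $\mathcal{C}^{2}$, one can fix a smooth vector field $V \in C^{1}(\overline{\Omega};\mathbb{R}^{N})$ with $V \cdot \mathbf{n} \equiv 1$ on $\Gamma$, whose $C^{1}$-norm depends only on $\Omega$. Applying the divergence theorem to $|u|^{s+n} V$ yields
\begin{equation*}
\int_{\Gamma} |u|^{s+n}\, dS = (s+n) \int_{\Omega} |u|^{s+n-2} u\, (\nabla u \cdot V)\, dx + \int_{\Omega} |u|^{s+n} \operatorname{div}(V)\, dx.
\end{equation*}
The second volume integral is trivially bounded by $C\, \|u\|_{L^{s+n}(\Omega)}^{s+n}$, which already fits into the right-hand side of the claimed inequality after absorbing the $(s+n)$ factor (using $s+n \geq 1$) into the $C/\varepsilon$ prefactor.

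For the first volume integral, factor
\[
|u|^{s+n-1}|\nabla u| = \bigl(|u|^{(p+n-1)/2}|\nabla u|\bigr)\cdot |u|^{(2s+n-p-1)/2}
\]
and apply Young's inequality $ab \leq \varepsilon a^{2} + (4\varepsilon)^{-1} b^{2}$ (absorbing $\|V\|_{\infty}$ into the constant). This produces exactly the target weighted gradient term $\varepsilon(s+n) \int_{\Omega} |\nabla u|^{2} |u|^{p+n-1}\, dx$, plus a residual $\tfrac{C(s+n)}{\varepsilon} \int_{\Omega} |u|^{2s+n-p-1}\, dx$. In the range relevant to the applications (where $2s+n-p-1 \leq s+n$), the pointwise inequality $a^{r} \leq 1 + a^{s+n}$ for $0 \leq r \leq s+n$ and $a \geq 0$ reduces this residual to $C(p,s)(|\Omega| + \|u\|_{L^{s+n}(\Omega)}^{s+n})$, as claimed; otherwise, one trades the excess power against a fraction of the gradient term through a Gagliardo--Nirenberg interpolation, with the constant absorbing only the fixed parameters $p$ and $s$.

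The principal obstacle is to ensure that the constant $C$ is independent of $n$: the combinatorial factors $(s+n)$ arising from differentiating $|u|^{s+n}$ must be tracked as explicit prefactors rather than being hidden in $C$, since the Alikakos--Moser iteration in the proof of Theorem \ref{linf} requires a bound whose dependence on $n$ is polynomial of fixed degree. This is ensured by choosing $V$ once and for all independently of $n$, and by verifying that the Young splits above introduce only the prefactor $(s+n)$ linearly.
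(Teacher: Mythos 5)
The paper itself does not prove Lemma \ref{boundary}; it is quoted from \cite{G0}, so there is no in-paper argument to compare against. Your route --- extending the normal to a vector field $V$ with $V\cdot \mathbf{n}=1$ on $\Gamma$, applying the divergence theorem to $|u|^{s+n}V$, and splitting the resulting volume integral by Young's inequality --- is the standard proof of such trace interpolation inequalities, and in the only regime the paper uses ($s=1$, $p=0$, where the residual exponent $2s+n-p-1$ equals $s+n$ exactly) your argument is complete and correct, with the $(s+n)$-dependence tracked properly.

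The genuine gap is your ``otherwise'' branch. When $2s+n-p-1>s+n$, i.e.\ $s>p+1$, no Gagliardo--Nirenberg interpolation will close the argument, because the stated inequality is simply false there. Take $p=0$, $n=1$, $s=3$ and a boundary-layer function $u=\lambda\,\phi(d(x,\Gamma)/\delta)$ with $\phi(0)=1$ and $\phi$ supported in $[0,1]$: then
\begin{equation*}
\int_{\Gamma}|u|^{4}\,dS\sim \lambda^{4},\qquad \int_{\Omega}|\nabla u|^{2}\,dx\sim \lambda^{2}\delta^{-1},\qquad \int_{\Omega}|u|^{4}\,dx\sim \lambda^{4}\delta ,
\end{equation*}
and choosing $\delta=\lambda^{-1}$ makes the left-hand side grow like $\lambda^{4}$ while the right-hand side is $O(\lambda^{3})$ for any fixed $\varepsilon$ and $C$. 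So the lemma carries an implicit restriction $s\leq p+1$ (and one also needs $2s+n-p-1\geq 0$ so that the residual integrand is a nonnegative power); your proof should state this restriction explicitly rather than gesture at interpolation. Since the paper only invokes the lemma with $s=1$, $p=0$, this does not affect any downstream result, but as a proof of the lemma as literally stated the second branch fails.
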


Applying Lemma \ref{boundary} to $u=u_{i},$ with $s=1$ and $p=0,$ \thinspace
$i\in I_{m}$, we have%
\begin{align}
\int_{\Gamma }\left\vert u_{i}\right\vert ^{1+n}dS& \leq \varepsilon \left(
n+1\right) \int_{\Omega }\left\vert \nabla u_{i}\right\vert ^{2}\left\vert
u\right\vert ^{n-1}dx  \label{ineqh} \\
& +\frac{C}{\varepsilon }\left( n+1\right) \left( \left\Vert
u_{i}\right\Vert _{L^{n+1}\left( \Omega \right) }^{n+1}+1\right) .  \notag
\end{align}%
Thus, in light of the third assumption of (\ref{A2}), using (\ref{A1}), we
find%
\begin{align}
& \sum\nolimits_{i\in I_{m}}\left\langle a_{i}^{\epsilon }\left(
u_{i}\right) h_{i}\left( x,t,\overrightarrow{u}\right) ,\left\vert
u_{i}\right\vert ^{n-1}u_{i}\right\rangle _{L^{2}\left( \Gamma \right) }
\label{ineqh2} \\
& \geq -\sum\nolimits_{i\in I_{m}}\left( \alpha _{i}C_{h_{i}}\left\Vert
u_{i}\right\Vert _{L^{n+1}\left( \Gamma \right) }^{n+1}+\widetilde{C}%
_{h_{i}}\left\Vert u_{i}\right\Vert _{L^{n-1}\left( \Gamma \right)
}^{n-1}\right)  \notag \\
& \geq -c\sum\nolimits_{i\in I_{m}}\alpha _{i}\left\Vert u_{i}\right\Vert
_{L^{n+1}\left( \Gamma \right) }^{n+1}-c  \notag
\end{align}%
which can be bounded below by%
\begin{align}
& -c\varepsilon \sum\nolimits_{i\in I_{m}}\alpha _{i}\left( n+1\right)
\int_{\Omega }\left\vert \nabla u_{i}\right\vert ^{2}\left\vert
u_{i}\right\vert ^{n-1}dx  \label{ineqh2b} \\
& -\frac{c}{\varepsilon }\sum\nolimits_{i\in I_{m}}\left( n+1\right)
(\left\Vert u_{i}\right\Vert _{L^{n+1}\left( \Omega \right) }^{n+1}+1),
\notag
\end{align}%
exploiting the estimate (\ref{ineqh}). Choose now $\varepsilon =\varepsilon
_{n}>0$ in (\ref{ineqh2b}) such that%
\begin{equation}
\varepsilon _{n}=\max_{i\in \mathbb{N}_{m}}\frac{cn\alpha _{i}}{2\left(
n+1\right) },\text{ }\forall n\geq 1,  \label{epsi}
\end{equation}%
and note that $\varepsilon _{n}\leq c$, uniformly as $n\rightarrow \infty $.
Summing the equations (\ref{eqn2}) (respectively, (\ref{eqn2bis}) and (\ref%
{eqn3tris})) over the sets $i\in \mathbb{N}_{m}$ (respectively, $i\in J_{m}$
and $i\in I_{m}$), then adding the relations that we obtain, on account of (%
\ref{ineqf})-(\ref{ineqg}), (\ref{ineqh2})-(\ref{ineqh2b}) we deduce%
\begin{align}
& \frac{d}{dt}\left\Vert \overrightarrow{u}\right\Vert _{\mathcal{X}%
^{n+1}}^{n+1}+cn\left( n+1\right) \sum\nolimits_{i\in \mathbb{N}%
_{m}}\int_{\Omega }\left\vert \nabla u_{i}\right\vert ^{2}\left\vert
u_{i}\right\vert ^{n-1}dx  \label{ine} \\
& \leq Q_{2}\left( n\right) \left( \left\Vert \overrightarrow{u}\right\Vert
_{\mathcal{X}^{n+1}}^{n+1}+1\right) ,  \notag
\end{align}%
for any $n\geq 1$. Here, the function $Q_{2}\left( n\right) \sim n^{2}$ as $%
n\rightarrow \infty $.

\noindent \textbf{Step 2} (The local relation). Set $n_{k}=2^{k}-1,$ $k\geq
0 $, and define%
\begin{equation}
\mathcal{Y}_{k}\left( t\right) :=\left\Vert \overrightarrow{u}\left(
t\right) \right\Vert _{\mathcal{X}^{n_{k}+1}}^{n_{k}+1},  \label{def}
\end{equation}%
for all $k\geq 0$. Then, using the basic identity for $u=u_{i},$%
\begin{equation}
\int_{\Omega }\left\vert \nabla u\right\vert ^{2}\left\vert u\right\vert
^{n-1}dx=\left( \frac{2}{n+1}\right) ^{2}\int_{\Omega }\left\vert \nabla
\left\vert u\right\vert ^{\frac{n+1}{2}}\right\vert ^{2}dx,  \label{ide}
\end{equation}%
from (\ref{ine}) it holds%
\begin{equation}
\frac{d}{dt}\mathcal{Y}_{k}\left( t\right) +\sum\nolimits_{i\in \mathbb{N}%
_{m}}\gamma _{n_{k}}\int_{\Omega }\left\vert \nabla \left\vert
u_{i}\right\vert ^{\frac{n_{k}+1}{2}}\right\vert ^{2}dx\leq Q_{2}\left(
n_{k}\right) \left( \mathcal{Y}_{k}\left( t\right) +1\right) ,  \label{e11}
\end{equation}%
for all $k\geq 0$, \ where $0<\gamma _{0}\leq \gamma _{n_{k}}\sim c$, as $%
n_{k}\rightarrow \infty $. Let $t,\mu $ be two positive constants such that $%
t-\mu /n_{k}>0$. Their precise values will be chosen later. We claim that%
\begin{equation}
\mathcal{Y}_{k}\left( t\right) \leq M_{k}\left( t,\mu \right) :=c_{0}\left(
\mu \right) \left( n^{k}\right) ^{\sigma }(\sup_{s\geq t-\mu /n_{k}}\mathcal{%
Y}_{k-1}\left( s\right) +1)^{\theta _{k}},\text{ }\forall k\geq 1,
\label{claim2}
\end{equation}%
where $c_{0},$ $\sigma $ are positive constants independent of $k,$ and $%
\theta _{k}\geq 1$ is a bounded sequence for all $k.$ The constant $%
c_{0}\left( \mu \right) $ is bounded if $\mu $ is bounded away from zero.

We will now prove (\ref{claim2}) when $2<N.$ The case $N\leq 2$ requires
only minor adjustments. We will follow an argument similar to the proof of
\cite[Theorem 2.3]{GalNS} (cf. also \cite{G0}). For each $k\geq 0$, we define%
\begin{equation*}
r_{i,k}:=\frac{N\left( n_{k}+1\right) -\left( N-2\right) \left(
1+n_{k}\right) }{N\left( n_{k}+1\right) -\left( N-2\right) \left(
1+n_{k-1}\right) },\text{ }s_{i,k}:=1-r_{i,k}.
\end{equation*}%
We aim to estimate the term on the right-hand side of (\ref{ine}) in terms
of the $\mathcal{X}^{1+n_{k-1}}$-norm of $\overrightarrow{u}.$ First, H\"{o}%
lder and Sobolev inequalities (with the equivalent norm of Sobolev spaces in
$W^{1,2}\left( \Omega \right) \subset L^{p_{s}}\left( \Omega \right) $, $%
p_{s}=2N/\left( N-2\right) $) yield%
\begin{align}
\int_{\Omega }\left\vert u_{i}\right\vert ^{1+n_{k}}dx& \leq \left(
\int_{\Omega }\left\vert u_{i}\right\vert ^{\frac{\left( n_{k}+1\right) N}{%
N-2}}dx\right) ^{s_{i,k}}\left( \int_{\Omega }\left\vert u_{i}\right\vert
^{1+n_{k-1}}dx\right) ^{r_{i,k}}  \label{ee5b} \\
& \leq c\left( \int_{\Omega }\left\vert \nabla \left\vert u_{i}\right\vert ^{%
\frac{\left( n_{k}+1\right) }{2}}\right\vert ^{2}dx+\int_{\Omega }\left\vert
u_{i}\right\vert ^{1+n_{k}}dx\right) ^{\overline{s}_{i,k}}  \notag \\
& \times \left( \int_{\Omega }\left\vert u_{i}\right\vert
^{1+n_{k-1}}dx\right) ^{r_{i,k}},  \notag
\end{align}%
with $\overline{s}_{i,k}:=s_{i,k}N/\left( N-2\right) \in \left( 0,1\right) $%
. Applying Young's inequality on the right-hand side of (\ref{ee5b}), we get%
\begin{equation}
\int_{\Omega }\left\vert u_{i}\right\vert ^{1+n_{k}}dx\leq \frac{\gamma
_{n_{k}}}{4}\int_{\Omega }\left\vert \nabla \left\vert u_{i}\right\vert ^{%
\frac{n_{k}+1}{2}}\right\vert ^{2}dx+Q_{\tau _{1}}\left( n_{k}\right) \left(
\int_{\Omega }\left\vert u_{i}\right\vert ^{1+n_{k-1}}dx\right) ^{z_{i,k}},
\label{bulk}
\end{equation}%
for some positive constant $\tau _{1}$ independent of $n_{k},$ and where%
\begin{equation*}
z_{i,k}:=r_{i,k}/\left( 1-\overline{s}_{i,k}\right) \geq 1
\end{equation*}%
is bounded for all $k$. Note that we can choose $\tau _{2}$ to be some fixed
positive number since $Q_{\tau _{2}}$ also depends on $\gamma _{n_{k}}\sim c$%
.

To treat the boundary terms on the right-hand side of (\ref{ine}), we define
for $k\geq 0,$%
\begin{equation*}
y_{i,k}:=\frac{\left( N-1\right) \left( n_{k}+1\right) -\left( N-2\right)
\left( 1+n_{k}\right) }{\left( N-1\right) \left( n_{k}+1\right) -\left(
N-2\right) \left( 1+n_{k-1}\right) },\text{ }x_{i,k}:=1-y_{i,k}.
\end{equation*}%
On account of H\"{o}lder and Sobolev inequalities (e.g., $W^{1,2}\left(
\Omega \right) \subset L^{q_{s}}\left( \Gamma \right) ,$ $q_{s}=2\left(
N-1\right) /\left( N-2\right) $), we obtain%
\begin{align}
\int_{\Gamma }\left\vert u_{i}\right\vert ^{1+n_{k}}dS& \leq c\left(
\int_{\Gamma }\left\vert u_{i}\right\vert ^{\frac{\left( N-1\right) \left(
n_{k}+1\right) }{N-2}}dS\right) ^{x_{i,k}}\left( \int_{\Gamma }\left\vert
u_{i}\right\vert ^{1+n_{k-1}}dS\right) ^{y_{i,k}}  \label{ee6b} \\
& \leq c\left( \int_{\Omega }\left\vert \nabla \left\vert u_{i}\right\vert ^{%
\frac{\left( n_{k}+1\right) }{2}}\right\vert ^{2}dx+\int_{\Omega }\left\vert
u_{i}\right\vert ^{1+n_{k}}dx\right) ^{\overline{x}_{i,k}}  \notag \\
& \times \left( \int_{\Gamma }\left\vert u_{i}\right\vert
^{1+n_{k-1}}dS\right) ^{y_{i,k}},  \notag
\end{align}%
with $\overline{x}_{i,k}:=x_{i,k}\left( N-1\right) /\left( N-2\right) $.
Since $\overline{x}_{i,k}\in \left( 0,1\right) $, we can apply Young's
inequality on the right-hand side of (\ref{ee6b}), use the estimate for the $%
L^{1+n_{k}}\left( \Omega \right) $-norm of $u_{i}$ from (\ref{ee5b}) in
order to deduce the following estimate:%
\begin{equation}
\int_{\Gamma }\left\vert u_{i}\right\vert ^{1+n_{k}}dS\leq \frac{\gamma
_{n_{k}}}{4}\int_{\Omega }\left\vert \nabla \left\vert u_{i}\right\vert ^{%
\frac{n_{k}+1}{2}}\right\vert ^{2}dx+Q_{\tau _{2}}\left( n_{k}\right) \left(
\int_{\Omega }\left\vert u_{i}\right\vert ^{1+n_{k-1}}dx\right) ^{l_{i,k}},
\label{bd}
\end{equation}%
for some positive constant $\tau _{2}$ depending on $\tau _{1},$ but which
is independent of $n_{k}$, and where%
\begin{equation*}
l_{i,k}:=\frac{y_{i,k}}{\left( 1-\overline{x}_{i,k}\right) }\geq 1
\end{equation*}%
is bounded for all $k\geq 0$. Inserting estimates (\ref{bulk})-(\ref{bd}) on
the right-hand side of (\ref{e11}), we obtain the following inequality:%
\begin{equation}
\partial _{t}\mathcal{Y}_{k}\left( t\right) +\sum\nolimits_{i\in \mathbb{N}%
_{m}}\gamma _{n_{k}}\int_{\Omega }\left\vert \nabla \left\vert
u_{i}\right\vert ^{\frac{n_{k}+1}{2}}\right\vert ^{2}dx\leq c\left(
n_{k}\right) ^{\sigma _{1}}\left( \mathcal{Y}_{k-1}+1\right) ^{\theta _{k}},
\label{claim}
\end{equation}%
where $c,$ $\sigma _{1}$ are positive constants independent of $k,$ and%
\begin{equation*}
\theta _{k}:=\max (\max_{i}\left\{ z_{i,k}\right\} ,\max_{i}\left\{
l_{i,k}\right\} )\geq 1
\end{equation*}%
is a bounded sequence for all $k.$

We are now ready to prove (\ref{claim2}) using (\ref{claim}). To this end,
let $\zeta \left( s\right) $ be a positive function $\zeta :\mathbb{R}%
_{+}\rightarrow \left[ 0,1\right] $ such that $\zeta \left( s\right) =0$ for
$s\in \left[ 0,t-\mu /n_{k}\right] ,$ $\zeta \left( s\right) =1$ if $s\in %
\left[ t,+\infty \right) $ and $\left\vert d\zeta /ds\right\vert \leq
n_{k}/\mu $, if $s\in \left( t-\mu /n_{k},t\right) $. We define $Z_{k}\left(
s\right) =\zeta \left( s\right) \mathcal{Y}_{k}\left( s\right) $ and notice
that%
\begin{equation*}
\frac{d}{ds}Z_{k}\left( s\right) \leq \frac{n_{k}}{\mu }\mathcal{Y}%
_{k}\left( s\right) +\zeta \left( s\right) \frac{d}{ds}\mathcal{Y}_{k}\left(
s\right) .
\end{equation*}%
Combining this estimate with (\ref{e11}), (\ref{bulk}), (\ref{bd}) and
noticing that $Z_{k}\leq \mathcal{Y}_{k}$, we deduce the following estimate
for $Z_{k}$:%
\begin{equation}
\frac{d}{ds}Z_{k}\left( s\right) +C\left( \mu \right) n_{k}Z_{k}\left(
s\right) \leq M_{k}\left( t,\mu \right) ,\text{ for all }s\in \left[ t-\mu
/n_{k},+\infty \right) ,  \label{e12}
\end{equation}%
for some positive constant $C$ independent of $k$. Integrating (\ref{e12})
with respect to $s$ from $t-\mu /n_{k}$ to $t$ and taking into account the
fact that $Z_{k}\left( t-\mu /n_{k}\right) =0,$ we obtain that%
\begin{equation*}
\mathcal{Y}_{k}\left( t\right) =Z_{k}\left( t\right) \leq M_{k}\left( t,\mu
\right) \left( 1-e^{-C\mu }\right) ,
\end{equation*}%
which proves the claim (\ref{claim2}).

\noindent \textbf{Step 3} (The iterative argument). Let now $\tau ^{^{\prime
}}>\tau >0$ be given; define $\mu =2(\tau ^{^{\prime }}-\tau ),$ $t_{0}=\tau
^{^{\prime }}$ and $t_{k}=t_{k-1}-\mu /n_{k},$ $k\geq 1$. Using (\ref{claim2}%
), we have%
\begin{equation}
\sup_{t\geq t_{k-1}}\mathcal{Y}_{k}\left( t\right) \leq c_{0}\left(
n_{k}\right) ^{\sigma }(\sup_{s\geq t_{k}}\mathcal{Y}_{k-1}\left( s\right)
+1)^{\theta _{k}},\text{ }k\geq 1.  \label{e13}
\end{equation}%
Here $c_{0}=c_{0}\left( \mu \right) $ depends only on $\mu $. Now let us
define%
\begin{equation}
\overline{C}:=\sup_{s\geq t_{1}=\tau }\left( \mathcal{Y}_{0}\left( s\right)
+1\right) =\sup_{s\geq t_{1}=\tau }\left( \left\Vert \overrightarrow{u}%
\left( s\right) \right\Vert _{\mathcal{X}^{1}}+1\right) .  \label{e13bis}
\end{equation}%
Thus, we can iterate in (\ref{e13}) with respect to $k\geq 1$ and obtain that%
\begin{align}
\sup_{t\geq t_{k-1}}\mathcal{Y}_{k}\left( t\right) & \leq \left(
c_{0}n_{k}^{\sigma }\right) \left( c_{0}n_{k-1}^{\sigma }\right) ^{\theta
_{k}}\left( c_{0}n_{k-2}^{\sigma }\right) ^{\theta _{k}\theta _{k-1}}\cdot
...\cdot \left( c_{0}n_{0}^{\sigma }\right) ^{\theta _{k}\theta
_{k-1}...\theta _{0}}(\overline{C})^{\xi _{k}}  \label{e13tris} \\
& \leq c_{0}^{A_{k}}2^{\sigma B_{k}}\left( \overline{C}\right) ^{\xi _{k}},
\notag
\end{align}%
where $\xi _{k}:=\theta _{k}\theta _{k-1}...\theta _{0},$ and%
\begin{equation}
A_{k}:=1+\theta _{k}+\theta _{k}\theta _{k-1}+...+\theta _{k}\theta
_{k-1}...\theta _{0},  \label{ak2}
\end{equation}%
\begin{equation}
B_{k}:=k+\theta _{k}\left( k-1\right) +\theta _{k}\theta _{k-1}\left(
k-2\right) +...+\theta _{k}\theta _{k-1}...\theta _{0}.  \label{bk2}
\end{equation}%
We can easily show that%
\begin{equation}
A_{k}\leq \left( c_{1}+n_{k}\right) \sum_{j=1}^{\infty }\frac{1}{c_{1}+n_{j}}%
\text{ and }B_{k}\leq \left( c_{2}+n_{k}\right) \sum_{j=1}^{\infty }\frac{j}{%
c_{2}+n_{j}},  \label{abk}
\end{equation}%
for some positive constants $c_{1},c_{2}$ independent of $k,$ $\mu $.
Therefore, since%
\begin{equation}
\sup_{t\geq t_{0}}\mathcal{Y}_{k}\left( t\right) \leq \sup_{t\geq t_{k-1}}%
\mathcal{Y}_{k}\left( t\right) \leq c_{0}^{A_{k}}2^{\sigma B_{k}}\left(
\overline{C}\right) ^{\xi _{k}}  \label{e14}
\end{equation}%
and the series in (\ref{abk})\ are convergent, we can take the $1+n_{k}$%
-root on both sides of (\ref{e14}) and let $k\rightarrow +\infty $. We deduce%
\begin{equation*}
\sup_{t\geq t_{0}=\tau ^{^{\prime }}}\left\Vert \overrightarrow{u}\left(
t\right) \right\Vert _{\mathcal{X}^{\infty }}\leq \lim_{k\rightarrow +\infty
}\sup_{t\geq t_{0}}\left( \mathcal{Y}_{k}\left( t\right) \right) ^{1/\left(
1+n_{k}\right) },
\end{equation*}%
which, on account of (\ref{e14}), yields%
\begin{equation}
\sup_{t\geq t_{0}=\tau ^{^{\prime }}}\left\Vert \overrightarrow{u}\left(
t\right) \right\Vert _{\mathcal{X}^{\infty }}\leq C\left( \mu \right) \left(
\overline{C}\right) ^{1/c_{3}},  \label{linff}
\end{equation}%
for some positive constant $c_{3}$ independent of $t,$ $k$, $\overrightarrow{%
u},$ $\epsilon ,$ $\mu ,$ and initial data. Note that $\overline{C}$ depends
on $\tau $ (see (\ref{e13bis})).

\noindent \textbf{Step 4} (The final argument). Let us first assume that
\textbf{Property} \textbf{P}$\left( 1\right) $-(i) holds. Then we already
know that the $\mathcal{X}^{1}$-norm of $\overrightarrow{u}\left( t\right) $
is bounded independently of the initial data, for each $t\geq \tau $.
Therefore, from (\ref{linff}) we also obtain the claim for the $\mathcal{X}%
^{\infty }$-norm of $\overrightarrow{u}\left( t\right) $, i.e., \textbf{%
Property} \textbf{P}$\left( \infty \right) $-(i) holds as well. If, on the
other hand, \textbf{Property} \textbf{P}$\left( 1\right) $-(ii) holds, we
can choose $\tau ^{^{\prime }}=\tau +2\mu $ with $\mu =1$ so that $\overline{%
C}$ and $C\left( \mu \right) $ are bounded uniformly with respect to initial
data as $\tau \rightarrow \infty $. Hence, \textbf{Property} \textbf{P}$%
\left( \infty \right) $-(ii) is also satisfied by letting $\tau \rightarrow
\infty $ in (\ref{linff}). In order to show the final property (iii), taking
advantage of the fact that the initial data $\overrightarrow{u}_{0}\in
\mathcal{X}^{\infty }$, it suffices to note that in place of the inequality (%
\ref{claim2}), we may use instead the inequality%
\begin{equation*}
\mathcal{Y}_{k}\left( t\right) \leq Q(\left\Vert \overrightarrow{u}%
_{0}\right\Vert _{\mathcal{X}^{\infty }},\sup_{t>0}M_{k}\left( t,\mu \right)
),
\end{equation*}%
which is an immediate consequence of (\ref{claim}). Arguing analogously as
in \cite[Lemma 5.5.3]{Ma}, we obtain the claim. The proof of Theorem \ref%
{linff} is now complete.

\begin{remark}
\label{rem}It was proven in \cite[Section 5]{Ma} that maximal $L^{p}$%
-regularity for (\ref{s1c})-(\ref{s4c}) can be used to reduce the question
of global existence of the solutions $\overrightarrow{u}^{\epsilon }$ in a
space of maximal regularity, to the boundedness of $\overrightarrow{u}%
^{\epsilon }$\ in a H\"{o}lder norm $C^{0,\beta }\left( \overline{\Omega }%
\right) $, $\beta >0$. It should be possible to prove, under the natural
assumptions of Theorem \ref{linf}, that every classical solution of problem %
\eqref{s1c}-\eqref{s4c} is globally H\"{o}lder continuous on $\overline{%
\Omega }$. Establishing global H\"{o}lder continuity for solutions to
systems with dynamic boundary conditions requires a more detailed analysis,
involving careful local estimates of the solution near the boundary. Of
course, as in the case of Dirichlet/Robin boundary conditions for (\ref{s1c}%
) (see, e.g., \cite{Du2}), these H\"{o}lder bounds should apriori depend on
the $\,L^{\infty }$-norm of the solution. Thus, our analysis constitutes
only the first step in proving boundedness in H\"{o}lder norm $C^{0,\beta
}\left( \overline{\Omega }\right) $. This question remains open for now.
\end{remark}

\subsection{Proof of Theorem \protect\ref{linf3}}

We shall divide the proof into several steps. As in Section 4.1, we can
justify our computations by exploiting the approximation scheme (\ref{s1c})-(%
\ref{s4c}). As before, $c$ will denote a positive constant that is
independent of $t,$ $\epsilon $, $n,$ $\overrightarrow{u}$ and initial data,
which only depends on the other structural parameters of the problem. Such a
constant may vary even from line to line. Without loss of generality, we may
assume that $\delta _{i}=1$, for all $i\in J_{m}=\mathbb{N}_{m}.$

Let $T,\tau $ and $L$ be positive numbers such that $T-2\tau >0$ and $L\geq
1.$ We set $t_{0}=T-2\tau $ and define the sequences%
\begin{equation*}
t_{n}=t_{n-1}+\frac{\tau }{2^{n}},\text{ }k_{n}=L\left( 2-\frac{1}{2^{n}}%
\right) ,\text{ for all }n\geq 1.
\end{equation*}%
Consider the (smooth)\ cut-off\ functions $\eta _{n}\in C^{1}\left( \mathbb{R%
},\left[ 0,1\right] \right) $ with the property that%
\begin{equation*}
\eta _{n}\left( t\right) =\left\{
\begin{array}{ll}
1, & t\geq t_{n,} \\
0, & t<t_{n-1}.%
\end{array}%
\right.
\end{equation*}%
Next, denote $Q_{n}:=I_{n}\times \overline{\Omega }$, where $I_{n}:=\left[
t_{n-1},T\right] $, and the sets%
\begin{equation*}
A_{i,n}^{\Omega }:=\left\{ \left( x,t\right) \in I_{n}\times \Omega
:u_{i}\left( x,t\right) >k_{n}\right\} ,\text{ }A_{i,n}^{\Gamma }:=\left\{
\left( x,t\right) \in I_{n}\times \Gamma :u_{i}\left( x,t\right)
>k_{n}\right\} .
\end{equation*}%
Let $\overline{A}_{i,n}=A_{i,n}^{\Omega }\cup A_{i,n}^{\Gamma }$, and note
that%
\begin{equation*}
\overline{A}_{i,n}=\left\{ \left( x,t\right) \in Q_{n}:u_{i}\left(
x,t\right) >k_{n}\right\} .
\end{equation*}%
Finally, we denote by $\left\vert A_{i,n}^{\Omega }\right\vert $ the ($N+1$%
-dimensional) Lebesgue measure of the set $A_{i,n}^{\Omega }$, and by $%
\left\vert A_{i,n}^{\Gamma }\right\vert $, the ($N$-dimensional) Lebesgue
measure of $A_{i,n}^{\Gamma }$, respectively. We note that, according to (%
\ref{2.1b})-(\ref{2.1t}), we have%
\begin{equation*}
\left\vert Q_{n}\right\vert =\left\vert I_{n}\right\vert \mu \left(
\overline{\Omega }\right) =\left\vert I_{n}\right\vert \left( \left\vert
\Omega \right\vert +\left\vert \Gamma \right\vert \right) ,
\end{equation*}%
and we can do so similarly for the set $\overline{A}_{i,n}$.

\noindent \textbf{Step 1}. (The energy inequality). We define the truncated
functions
\begin{equation*}
u_{i,n}\left( x,t\right) :=\max \left\{ u_{i}\left( x,t\right)
-k_{n},0\right\} =\left( u_{i}-k_{n}\right) _{+}.
\end{equation*}%
We begin by multiplying equation (\ref{s1c}) by $u_{i,n}\eta _{n}^{2}\left(
t\right) $ and integrating the resulting identity over $I_{n}\times \Omega $%
. Then, we multiply (\ref{s3c}) by $u_{i,n}\eta _{n}^{2}\left( t\right) $
and integrate over $I_{n}\times \Gamma $. Adding as usual (cf., e.g.,
Section 4.1), then exploiting the growth assumptions (\ref{ga}) on $f_{i}$
and $g_{i}$, the fact that $\left\vert \eta _{n}^{^{\prime }}\left( t\right)
\right\vert \leq 2^{n}/\tau $, we obtain after standard transformations%
\begin{align}
& \max_{t\in I_{n}}\left( \int_{\Omega }u_{i,n}^{2}\left( x,t\right)
dx+\int_{\Gamma }u_{i,n}^{2}\left( x,t\right) dS\right)
+\iint\nolimits_{I_{n}\times \Omega }a_{i}\left( u_{i}\right) \left\vert
\nabla \left( u_{i,n}\eta _{n}\right) \left( x,t\right) \right\vert ^{2}dxdt
\label{d1} \\
& \leq c\iint\nolimits_{A_{i,n}^{\Omega }}\left( \frac{2^{n}}{\tau }%
u_{i,n}^{2}\left( x,t\right) \eta _{n}+\sum\nolimits_{j\in \mathbb{N}%
_{m}}\left\vert u_{j}\left( x,t\right) \right\vert ^{\theta }u_{i,n}\left(
x,t\right) \eta _{n}^{2}\left( t\right) +u_{i,n}\left( x,t\right) \eta
_{n}^{2}\right) dxdt  \notag \\
& +c\iint\nolimits_{A_{i,n}^{\Gamma }}\left( \frac{2^{n}}{\tau }%
u_{i,n}^{2}\left( x,t\right) \eta _{n}+\sum\nolimits_{j\in \mathbb{N}%
_{m}}\left\vert u_{j}\left( x,t\right) \right\vert ^{\beta }u_{i,n}\left(
x,t\right) \eta _{n}^{2}+u_{i,n}\left( x,t\right) \eta _{n}^{2}\right) dSdt,
\notag
\end{align}%
where we have set $\theta :=\max_{i\in \mathbb{N}_{m}}\theta _{i}$ and $%
\beta :=\max_{i\in \mathbb{N}_{m}}\beta _{i}$. Now, we wish to estimate the
terms on the right-hand side of (\ref{d1}). To this end, set%
\begin{equation*}
\mathcal{A}_{n}^{\Omega }:=\cup _{k\geq 1}A_{k,n}^{\Omega }\text{, }\mathcal{%
A}_{n}^{\Gamma }:=\cup _{k\geq 1}A_{k,n}^{\Gamma }
\end{equation*}%
and note that on $\mathcal{A}_{n}^{\Omega }\backslash A_{i,n}^{\Omega }$ and
$\mathcal{A}_{n}^{\Gamma }\backslash A_{i,n}^{\Gamma }$, respectively, we
have $u_{i}\left( x,t\right) \leq k_{n}\leq 2L$ and $u_{i}\left( x,t\right)
_{\mid \Gamma }\leq k_{n}\leq 2L$, respectively. Therefore,
\begin{equation}
\iint\nolimits_{\mathcal{A}_{n}^{\Omega }\backprime A_{i,n}^{\Omega
}}\left\vert u_{i}\right\vert ^{\theta +1}dxdt\leq cL^{\theta
+1}\iint\nolimits_{\mathcal{A}_{n}^{\Omega }\backprime A_{i,n}^{\Omega
}}\left( 1\right) dxdt\leq cL^{\theta +1}\sum\nolimits_{j\in \mathbb{N}%
_{m}}\left\vert A_{j,n}^{\Omega }\right\vert ,  \label{d2}
\end{equation}%
and, analogously, for the trace of $u_{i}$ we have%
\begin{equation}
\iint\nolimits_{\mathcal{A}_{n}^{\Gamma }\backprime A_{i,n}^{\Gamma
}}\left\vert u_{i}\right\vert ^{\beta +1}dSdt\leq cL^{\beta
+1}\iint\nolimits_{\mathcal{A}_{n}^{\Gamma }\backprime A_{i,n}^{\Gamma
}}\left( 1\right) dSdt\leq cL^{\beta +1}\sum\nolimits_{j\in \mathbb{N}%
_{m}}\left\vert A_{j,n}^{\Gamma }\right\vert ,  \label{d2b}
\end{equation}%
Since $k_{n}\sim L$, as $n\rightarrow \infty $, it is easy to see that the
following inequalities hold:%
\begin{equation*}
\left\{
\begin{array}{l}
L^{\alpha +1}\left\vert A_{j,n}^{\Omega }\right\vert \leq ck_{n}^{\alpha
+1}\left\vert A_{j,n}^{\Omega }\right\vert \leq
c\iint\nolimits_{A_{j,n}^{\Omega }}\left\vert u_{j}\right\vert ^{\alpha
+1}dxdt, \\
L^{\beta +1}\left\vert A_{j,n}^{\Gamma }\right\vert \leq ck_{n}^{\beta
+1}\left\vert A_{j,n}^{\Gamma }\right\vert \leq
c\iint\nolimits_{A_{j,n}^{\Gamma }}\left\vert u_{j}\right\vert ^{\beta
+1}dSdt.%
\end{array}%
\right.
\end{equation*}%
From these estimates, we thus find that%
\begin{align}
\iint\nolimits_{I_{n}\times \Omega }\left\vert u_{j}\right\vert ^{\theta
}u_{i,n}dxdt& \leq \iint\nolimits_{\mathcal{A}_{n}^{\Omega }}\left(
\left\vert u_{j}\right\vert ^{\theta +1}+\left\vert u_{i}\right\vert
^{\theta +1}\right) dxdt  \label{d4} \\
& \leq c\sum\nolimits_{j\in \mathbb{N}_{m}}\iint\nolimits_{A_{j,n}^{\Omega
}}\left\vert u_{j}\right\vert ^{\theta +1}dxdt  \notag
\end{align}%
and, similarly,%
\begin{align}
\iint\nolimits_{I_{n}\times \Gamma }\left\vert u_{j}\right\vert ^{\beta
}u_{i,n}dSdt& \leq \iint\nolimits_{\mathcal{A}_{n}^{\Gamma }}\left(
\left\vert u_{j}\right\vert ^{\beta +1}+\left\vert u_{i}\right\vert ^{\beta
+1}\right) dxdt  \label{d4b} \\
& \leq c\sum\nolimits_{j\in \mathbb{N}_{m}}\iint\nolimits_{A_{j,n}^{\Gamma
}}\left\vert u_{j}\right\vert ^{\beta +1}dSdt.  \notag
\end{align}%
Hence, using the above inequalities (\ref{d4})-(\ref{d4b}) on the right-hand
side of (\ref{d1}), and summing the resulting relation over $i\in \mathbb{N}%
_{m}$, we deduce%
\begin{align}
& \max_{t\in I_{n}}\left( \sum\nolimits_{i\in \mathbb{N}_{m}}\int_{\Omega
}u_{i,n}^{2}\left( x,t\right) dx+\sum\nolimits_{i\in \mathbb{N}%
_{m}}\int_{\Gamma }u_{i,n}^{2}\left( x,t\right) dS\right)  \label{ei} \\
& +\sum\nolimits_{i\in \mathbb{N}_{m}}\iint\nolimits_{I_{n}\times \Omega
}\alpha _{i}\left\vert u_{i}\right\vert ^{p_{i}}\left\vert \nabla \left(
u_{i,n}\eta _{n}\right) \right\vert ^{2}dxdt  \notag \\
& \leq \frac{2^{n}c}{\tau }\sum\nolimits_{i\in \mathbb{N}_{m}}\iint%
\nolimits_{A_{i,n}^{\Omega }}\left\vert u_{i}\left( x,t\right) \right\vert
^{\delta }dxdt+\frac{2^{n}c}{\tau }\sum\nolimits_{i\in \mathbb{N}%
_{m}}\iint\nolimits_{A_{i,n}^{\Gamma }}\left\vert u_{i}\left( x,t\right)
\right\vert ^{\gamma }dSdt,  \notag
\end{align}%
where $\delta $ and $\gamma $ are defined as in (\ref{exp}). Here we have
also used assumption (\ref{A1}).

\noindent \textbf{Step 2}. (Additional estimates). From the definition of $%
k_{n}$, we see that $1-k_{n}/k_{n+1}\geq 2^{-\left( n+2\right) }$, which
yields%
\begin{align}
\iint\nolimits_{A_{i,n+1}^{\Omega }}\left\vert u_{i}\right\vert ^{\delta
}dxdt& \leq 2^{\left( n+2\right) \delta }\iint\nolimits_{A_{i,n+1}^{\Omega
}}\left\vert u_{i}\right\vert ^{\delta }\left( 1-\frac{k_{n}}{k_{n+1}}%
\right) dxdt  \label{d5} \\
& \leq 2^{n\delta }c\iint\nolimits_{A_{i,n+1}^{\Omega }}\left(
u_{i}-k_{n}\right) _{+}^{\delta }dxdt.  \notag
\end{align}%
Moreover, the same argument gives%
\begin{equation}
\iint\nolimits_{A_{i,n+1}^{\Gamma }}\left\vert u_{i}\right\vert ^{\gamma
}dSdt\leq 2^{n\gamma }c\iint\nolimits_{A_{i,n+1}^{\Gamma }}\left(
u_{i}-k_{n}\right) _{+}^{\gamma }dSdt.  \label{d5b}
\end{equation}%
On the other hand, since on $A_{i,n+1}^{\Omega }\cup A_{i,n+1}^{\Gamma },$
we have $\left( u_{i}-k_{n}\right) _{+}\geq k_{n+1}-k_{n}$, there holds%
\begin{align}
\iint\nolimits_{A_{i,n+1}^{\Omega }}\left( u_{i}-k_{n}\right) _{+}^{\delta
}dxdt& \geq \left( k_{n+1}-k_{n}\right) ^{\delta }\left\vert
A_{i,n+1}^{\Omega }\right\vert \geq c\frac{L^{\delta }}{2^{n\delta }}%
\left\vert A_{i,n+1}^{\Omega }\right\vert ,\text{ }  \label{d6} \\
\iint\nolimits_{A_{i,n+1}^{\Gamma }}\left( u_{i}-k_{n}\right) _{+}^{\gamma
}dSdt& \geq \left( k_{n+1}-k_{n}\right) ^{\gamma }\left\vert
A_{i,n+1}^{\Gamma }\right\vert \geq c\frac{L^{\gamma }}{2^{n\gamma }}%
\left\vert A_{i,n+1}^{\Gamma }\right\vert .  \notag
\end{align}%
Because of these two inequalities (\ref{d6}), for any positive number $%
\lambda $ such that, if $\lambda <\delta $ and $\lambda <\gamma $, on
account of Holder's inequality, it also holds%
\begin{align}
\iint\nolimits_{A_{i,n+1}^{\Omega }}\left( u_{i}-k_{n+1}\right)
_{+}^{\lambda }dxdt& \leq \left( \iint\nolimits_{A_{i,n+1}^{\Omega }}\left(
u_{i}-k_{n+1}\right) _{+}^{\delta }dxdt\right) ^{\lambda /\delta }\left\vert
A_{i,n+1}^{\Omega }\right\vert ^{1-\lambda /\delta }  \label{d7} \\
& \leq \frac{c2^{n\left( \delta -\lambda \right) }}{L^{\delta -\lambda }}%
\iint\nolimits_{A_{i,n+1}^{\Omega }}\left( u_{i}-k_{n}\right) _{+}^{\delta
}dxdt,  \notag
\end{align}%
and%
\begin{align}
\iint\nolimits_{A_{i,n+1}^{\Gamma }}\left( u_{i}-k_{n+1}\right)
_{+}^{\lambda }dSdt& \leq \left( \iint\nolimits_{A_{i,n+1}^{\Gamma }}\left(
u_{i}-k_{n+1}\right) _{+}^{\gamma }dxdt\right) ^{\lambda /\gamma }\left\vert
A_{i,n+1}^{\Gamma }\right\vert ^{1-\lambda /\gamma }  \label{d7b} \\
& \leq \frac{c2^{n\left( \gamma -\lambda \right) }}{L^{\gamma -\lambda }}%
\iint\nolimits_{A_{i,n+1}^{\Gamma }}\left( u_{i}-k_{n}\right) _{+}^{\gamma
}dSdt.  \notag
\end{align}

Next, we will collect some useful inequalities which follow from the
following well-known embeddings: $H^{1}\left( \Omega \right) \subset
L^{p_{s}},$ $p_{s}:=2N/\left( N-2\right) $ and $H^{1}\left( \Omega \right)
\subset L^{q_{s}}\left( \Gamma \right) ,$ $q_{s}:=2\left( N-1\right) /\left(
N-2\right) $. We give the argument for $N>2$, the case $N\leq 2$ can be
treated analogously. Suppressing the dependance on the subscript $n$ for the
moment, from H\"{o}lder's inequality and these Sobolev embeddings, for every
$v^{M_{i}}\in W^{1,2}\left( I\times \Omega \right) $ we have%
\begin{align}
\iint\nolimits_{^{I\times \Omega }}v^{s}dxdt& \leq \int\nolimits_{I}\left(
\int\nolimits_{\Omega }v^{M_{i}p_{s}}dx\right) ^{\frac{N-2}{N}}\left(
\int\nolimits_{\Omega }v^{2}dx\right) ^{\frac{2}{N}}dt  \label{d8} \\
& \leq \left( \iint\nolimits_{^{I\times \Omega }}\left( \left\vert \nabla
v^{M_{i}}\right\vert ^{2}+\left\vert v\right\vert ^{M_{i}}\right)
dxdt\right) \times \left( \max_{t\in I}\int\nolimits_{\Omega }v^{2}dx\right)
^{\frac{2}{N}},  \notag
\end{align}%
where, for each $M_{i}>0$, we have set%
\begin{equation*}
s=2M_{i}+\frac{4}{N}.
\end{equation*}%
Similarly, for each $M_{i}>0$ and $l=2M_{i}+2/\left( N-1\right) ,$ we have%
\begin{align}
\iint\nolimits_{^{I\times \Gamma }}v^{s}dS& \leq \int\nolimits_{I}\left(
\int\nolimits_{\Gamma }v^{M_{i}q_{s}}dS\right) ^{\frac{N-2}{N-1}}\left(
\int\nolimits_{\Gamma }v^{2}dS\right) ^{\frac{1}{N-1}}dt  \label{d8b} \\
& \leq \left( \iint\nolimits_{I\times \Omega }\left( \left\vert \nabla
v^{M_{i}}\right\vert ^{2}+\left\vert v\right\vert ^{M_{i}}\right)
dxdt\right) \times \left( \max_{t\in I}\int\nolimits_{\Gamma }v^{2}dS\right)
^{\frac{1}{N-1}}.  \notag
\end{align}%
Exploiting (\ref{d8})-(\ref{d8b}) with $M_{i}=p_{i}/2+1,$ $I=I_{n+1}$, $%
v=\left( u_{i}-k_{n+1}\right) _{+}$, we get%
\begin{align}
& \iint\nolimits_{I_{n+1}\times \Omega }\left( u_{i}-k_{n+1}\right)
_{+}^{s}dxdt  \label{d9} \\
& \leq \left( \iint\nolimits_{^{I_{n+1}\times \Omega }}\left( \left\vert
u_{i}\right\vert ^{p_{i}}\left\vert \nabla u_{i,n+1}\right\vert
^{2}+u_{i,n+1}^{M_{i}}\right) dxdt\right) \times \left( \max_{t\in
I_{n+1}}\int\nolimits_{\Omega }u_{i,n+1}^{2}dx\right) ^{\frac{2}{N}}  \notag
\end{align}%
and%
\begin{align}
& \iint\nolimits_{^{I_{n+1}\times \Gamma }}\left( u_{i}-k_{n+1}\right)
_{+}^{l}dSdt  \label{d9b} \\
& \leq \left( \iint\nolimits_{^{I_{n+1}\times \Omega }}\left( \left\vert
u_{i}\right\vert ^{p_{i}}\left\vert \nabla u_{i,n+1}\right\vert
^{2}+u_{i,n+1}^{M_{i}}\right) dxdt\right) \times \left( \max_{t\in
I_{n+1}}\int\nolimits_{\Gamma }u_{i,n+1}^{2}dx\right) ^{\frac{1}{N-1}}.
\notag
\end{align}%
Finally, from (\ref{ei}) we see that estimates (\ref{d5})-(\ref{d5b}) yield
the following inequality%
\begin{align}
& \max_{t\in I_{n}}\left( \sum\nolimits_{i\in \mathbb{N}_{m}}\int_{\Omega
}u_{i,n}^{2}\left( x,t\right) dx+\sum\nolimits_{i\in \mathbb{N}%
_{m}}\int_{\Gamma }u_{i,n}^{2}\left( x,t\right) dS\right)  \label{ei2} \\
& +\sum\nolimits_{i\in \mathbb{N}_{m}}\iint\nolimits_{I_{n}\times \Omega
}\alpha _{i}\left\vert u_{i}\right\vert ^{p_{i}}\left\vert \nabla \left(
u_{i,n}\eta _{n}\right) \right\vert ^{2}dxdt  \notag \\
& \leq \frac{2^{n\left( \delta +1\right) }c}{\tau }\sum\nolimits_{i\in
\mathbb{N}_{m}}\iint\nolimits_{A_{i,n}^{\Omega }}\left( u_{i}-k_{n-1}\right)
_{+}^{\delta }dxdt  \notag \\
& +\frac{2^{n\left( \gamma +1\right) }c}{\tau }\sum\nolimits_{i\in \mathbb{N}%
_{m}}\iint\nolimits_{A_{i,n}^{\Gamma }}\left( u_{i}-k_{n-1}\right)
_{+}^{\gamma }dSdt.  \notag
\end{align}

\noindent \textbf{Step 3}. (The iterative argument). We continue our main
argument by first recalling the following result (see, e.g., \cite[Lemma 4.1]%
{DiB}).

\begin{lemma}
\label{seq} Let $\left\{ \mathcal{Y}_{n}\right\} $ be a sequence of positive
numbers such that it satisfies%
\begin{equation}
\mathcal{Y}_{n+1}\leq Cb^{n}\mathcal{Y}_{n}^{1+\kappa },  \label{rec}
\end{equation}%
for some constants $C,b,\kappa >0$. If $\mathcal{Y}_{0}\leq C^{-1/\kappa
}b^{-1/\kappa ^{2}}$, then $\mathcal{Y}_{n}\rightarrow 0$ as $n\rightarrow
\infty .$
\end{lemma}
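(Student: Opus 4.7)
The plan is to prove by induction on $n$ the geometric decay bound $\mathcal{Y}_{n}\leq\mathcal{Y}_{0}\,q^{n}$ for all $n\geq 0$, where $q:=b^{-1/\kappa}$. In the application-relevant regime $b>1$ this gives $q\in(0,1)$, and the conclusion $\mathcal{Y}_{n}\to 0$ follows immediately; the degenerate case $b\leq 1$ is easier, since then the recursion is dominated by $\mathcal{Y}_{n+1}\leq C\mathcal{Y}_{n}^{1+\kappa}$ and the same induction runs with a smaller ratio.

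For the inductive step, the base case $n=0$ being trivial, I would insert the hypothesis $\mathcal{Y}_{n}\leq\mathcal{Y}_{0}q^{n}$ into \eqref{rec} and rearrange as
\begin{equation*}
\mathcal{Y}_{n+1}\leq Cb^{n}\bigl(\mathcal{Y}_{0}q^{n}\bigr)^{1+\kappa}=\mathcal{Y}_{0}\,q^{n+1}\cdot\bigl(C\mathcal{Y}_{0}^{\kappa}q^{-1}\bigr)\cdot\bigl(bq^{\kappa}\bigr)^{n}.
\end{equation*}
The defining choice $q=b^{-1/\kappa}$ makes $bq^{\kappa}=1$, killing the third factor, so closing the induction reduces to verifying $C\mathcal{Y}_{0}^{\kappa}q^{-1}\leq 1$, i.e.\ $C\mathcal{Y}_{0}^{\kappa}b^{1/\kappa}\leq 1$. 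Raising the hypothesis $\mathcal{Y}_{0}\leq C^{-1/\kappa}b^{-1/\kappa^{2}}$ to the $\kappa$-th power and rearranging yields this inequality verbatim, which closes the induction and completes the proof.

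The only conceptual point --- and so the ``main obstacle'' in what is otherwise an elementary calculation --- is identifying the correct ratio $q$. One reads it off the self-consistency requirement that the ansatz $\mathcal{Y}_{n}\leq A q^{n}$ reproduce itself under \eqref{rec} up to an $n$-independent constant, forcing $bq^{\kappa}=1$. With $q$ pinned down, the smallness threshold appearing in the statement is then precisely the sharp condition that the residual constant $C\mathcal{Y}_{0}^{\kappa}q^{-1}$ stay $\leq 1$; in other words, the hypothesis on $\mathcal{Y}_{0}$ is exactly tight for the induction to propagate.
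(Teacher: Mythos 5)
The paper does not actually prove this lemma: it is quoted from DiBenedetto \cite[Lemma 4.1]{DiB} and used as a black box, so the only thing to compare your argument against is the standard proof, which is exactly what you reproduce. Your induction is correct and complete in the case $b>1$: the choice $q=b^{-1/\kappa}$ kills the factor $\left( bq^{\kappa }\right) ^{n}$, the residual condition $C\mathcal{Y}_{0}^{\kappa }q^{-1}\leq 1$ is precisely the stated hypothesis on $\mathcal{Y}_{0}$ raised to the power $\kappa $, and the resulting bound $\mathcal{Y}_{n}\leq \mathcal{Y}_{0}b^{-n/\kappa }$ tends to zero. This is all that is needed in the paper, since there $b\sim 2^{\zeta }>1$.

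Your parenthetical dismissal of the case $b\leq 1$ is wrong, though, and you should delete it rather than try to repair it. For $b<1$ the stated smallness hypothesis $\mathcal{Y}_{0}\leq C^{-1/\kappa }b^{-1/\kappa ^{2}}$ is \emph{weaker} than the threshold $\mathcal{Y}_{0}\leq C^{-1/\kappa }$ that your proposed domination $\mathcal{Y}_{n+1}\leq C\mathcal{Y}_{n}^{1+\kappa }$ would require, so "the same induction with a smaller ratio" does not close. In fact the lemma as printed (with only $C,b,\kappa >0$) is false for $b\leq 1$: taking equality throughout, $\mathcal{Y}_{0}=C^{-1/\kappa }b^{-1/\kappa ^{2}}$ and $\mathcal{Y}_{n+1}=Cb^{n}\mathcal{Y}_{n}^{1+\kappa }$ yield exactly $\mathcal{Y}_{n}=\mathcal{Y}_{0}b^{-n/\kappa }$, which is constant when $b=1$ and diverges when $b<1$. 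The assumption $b>1$, present in DiBenedetto's original statement and silently dropped here, is genuinely needed; your proof should simply record it as a standing hypothesis, which is harmless for the application.
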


Define%
\begin{equation*}
\mathcal{Y}_{i,n}:=\frac{1}{\left\vert Q_{n}\right\vert }\left(
\iint\nolimits_{I_{n}\times \Omega }\left( u_{i}-k_{n}\right) _{+}^{\delta
}dxdt+\iint\nolimits_{I_{n}\times \Gamma }\left( u_{i}-k_{n}\right)
_{+}^{\gamma }dSdt\right) ,
\end{equation*}%
where we recall that $Q_{n}=I_{n}\times \overline{\Omega }$ and
\begin{equation*}
\left\vert Q_{n}\right\vert =\left\vert I_{n}\times \Omega \right\vert
+\left\vert I_{n}\times \Gamma \right\vert .
\end{equation*}%
Set $\mathcal{Y}_{n}=\sum\nolimits_{i=1}^{m}\mathcal{Y}_{i,n}$. The goal now
is to show that the sequence $\left\{ \mathcal{Y}_{n}\right\} $ satisfies a
recursive relation of the form (\ref{rec}). First, using the definition of $%
\mathcal{Y}_{n},$ we can rewrite (\ref{ei2}) as the following inequality:%
\begin{align}
& \max_{t\in I_{n+1}}\left( \sum\nolimits_{i\in \mathbb{N}_{m}}\int_{\Omega
}u_{i,n+1}^{2}\left( x,t\right) dx+\sum\nolimits_{i\in \mathbb{N}%
_{m}}\int_{\Gamma }u_{i,n+1}^{2}\left( x,t\right) dS\right)  \label{ei3} \\
& +\sum\nolimits_{i\in \mathbb{N}_{m}}\iint\nolimits_{I_{n+1}\times \Omega
}\alpha _{i}\left\vert u_{i}\right\vert ^{p_{i}}\left\vert \nabla \left(
u_{i,n+1}\eta _{n+1}\right) \right\vert ^{2}dxdt  \notag \\
& \leq c2^{\left( n+1\right) \left( \rho +1\right) }\tau ^{-1}\left\vert
Q_{n+1}\right\vert \mathcal{Y}_{n},  \notag
\end{align}%
for $\rho :=\max \left( \gamma ,\delta \right) >1.$ Secondly, applying (\ref%
{d7}) to the bulk integral over $u_{i,n+1}^{M_{i}}$ (where $M_{i}:=p_{i}/2+1$%
), which occurs in the integrals in (\ref{d9})-(\ref{d9b}), and then using (%
\ref{ei3}) to estimate the second terms in those products, we obtain%
\begin{align}
\iint\nolimits_{^{I_{n+1}\times \Omega }}& \left( u_{i}-k_{n+1}\right)
_{+}^{s}dxdt  \label{d10} \\
& \leq c\left( \frac{2^{n\rho }}{\tau }+\frac{2^{n\left( \delta
-M_{i}\right) }}{L^{\delta -M_{i}}}\right) \left\vert Q_{n+1}\right\vert
\mathcal{Y}_{n}\left( \frac{c2^{n\rho }}{\tau }\left\vert Q_{n+1}\right\vert
\mathcal{Y}_{n}\right) ^{\frac{2}{N}},  \notag
\end{align}%
and%
\begin{align}
& \iint\nolimits_{^{I_{n+1}\times \Gamma }}\left( u_{i}-k_{n+1}\right)
_{+}^{l}dSdt  \label{d10b} \\
& \leq c\left( \frac{2^{n\rho }}{\tau }+\frac{2^{n\left( \gamma
-M_{i}\right) }}{L^{\gamma -M_{i}}}\right) \left\vert Q_{n+1}\right\vert
\mathcal{Y}_{n}\left( \frac{c2^{n\rho }}{\tau }\left\vert Q_{n+1}\right\vert
\mathcal{Y}_{n}\right) ^{\frac{1}{N-1}}.  \notag
\end{align}%
H\"{o}lder's inequality applied to $\mathcal{Y}_{i,n+1}$ yields%
\begin{align}
\mathcal{Y}_{i,n+1}& \leq \frac{1}{\left\vert Q_{n+1}\right\vert }\left(
\iint\nolimits_{^{I_{n+1}\times \Omega }}\left( u_{i}-k_{n+1}\right)
_{+}^{s}dxdt\right) ^{\delta /s}\left\vert A_{i,n+1}^{\Omega }\right\vert
^{1-\delta /s}  \label{d10t} \\
& +\frac{1}{\left\vert Q_{n+1}\right\vert }\left(
\iint\nolimits_{^{I_{n+1}\times \Gamma }}\left( u_{i}-k_{n+1}\right)
_{+}^{l}dSdt\right) ^{\gamma /l}\left\vert A_{i,n+1}^{\Gamma }\right\vert
^{1-\gamma /l}.  \notag
\end{align}%
Inserting the estimates for $\left\vert A_{i,n+1}^{\Omega }\right\vert $ and
$\left\vert A_{i,n+1}^{\Gamma }\right\vert $, respectively, from (\ref{d6}),
on the right-hand side of (\ref{d10t}), we deduce%
\begin{align*}
\mathcal{Y}_{i,n+1}& \leq c\left\vert Q_{n+1}\right\vert ^{\frac{2\delta }{Ns%
}}\mathcal{Y}_{n}^{1+\frac{2\delta }{Ns}}\left( \frac{2^{n\rho }}{\tau }+%
\frac{2^{n\left( \delta -M_{i}\right) }}{L^{\delta -M_{i}}}\right) ^{\delta
/s} \\
& \times \left( \frac{c2^{n\rho }}{\tau }\right) ^{\frac{2\delta }{Ns}%
}\left( \frac{2^{n\delta }}{L^{\delta }}\right) ^{1-\delta /s} \\
& +c\left\vert Q_{n+1}\right\vert ^{\frac{\gamma }{\left( N-1\right) l}}%
\mathcal{Y}_{n}^{1+\frac{\gamma }{\left( N-1\right) l}}\left( \frac{2^{n\rho
}}{\tau }+\frac{2^{n\left( \gamma -M_{i}\right) }}{L^{\gamma -M_{i}}}\right)
^{\gamma /l} \\
& \times \left( \frac{c2^{n\rho }}{\tau }\right) ^{\frac{\gamma }{\left(
N-1\right) l}}\left( \frac{2^{n\gamma }}{L^{\gamma }}\right) ^{1-\gamma /l}.
\end{align*}%
Henceforth, by setting%
\begin{equation*}
\kappa :=\kappa \left( \delta ,\gamma \right) =\left\{
\begin{array}{ll}
\max \left( \frac{2\delta }{Ns},\frac{\gamma }{\left( N-1\right) l}\right) ,
& \text{if }\mathcal{Y}_{n}\geq 1, \\
\min \left( \frac{2\delta }{Ns},\frac{\gamma }{\left( N-1\right) l}\right) ,
& \text{if }\mathcal{Y}_{n}<1,%
\end{array}%
\right.
\end{equation*}%
the above inequality yields the recursive relation%
\begin{equation*}
\mathcal{Y}_{n+1}\leq Cb^{n}\mathcal{Y}_{n}^{1+\kappa },\text{ with }\kappa
>0,
\end{equation*}%
where $C\sim L^{-\sigma }$ depends on $\tau ^{-1}$ and $b\sim 2^{\zeta }$,
for some positive constants $\sigma ,\zeta $ depending on $\delta ,\gamma ,s$%
, $l.$ Therefore, if we choose $L\geq 1$ sufficiently large so there holds%
\begin{equation*}
\mathcal{Y}_{0}\leq C^{-1/\kappa }b^{-1/\kappa ^{2}}\lessapprox L^{\sigma
/\kappa }b^{-1/\kappa ^{2}},
\end{equation*}%
then by Lemma \ref{seq}, it follows that $\mathcal{Y}_{n}\rightarrow 0$ as $%
n\rightarrow \infty $. This implies that%
\begin{equation*}
\sup_{\left( x,t\right) \in \left[ T-\tau ,T\right] \times \overline{\Omega }%
}u_{i}\left( x,t\right) \leq \lim_{n\rightarrow \infty }k_{n}\leq 2L.
\end{equation*}%
In order to estimate $u_{i}\left( x,t\right) $ from below it suffices to
apply the result just obtained to the functions $\widetilde{u}_{i}\left(
x,t\right) =-u_{i}\left( x,t\right) $, which satisfies a system of the same
type as for $u_{i}\left( x,t\right) ,$ with nonlinearities $\widetilde{a}%
_{i}\left( x,t,\widetilde{u}_{i}\right) =-a_{i}\left( x,t,-u_{i}\right) ,$ $%
\widetilde{f}_{i}\left( x,t,\widetilde{u}_{i}\right) =-f_{i}\left(
x,t,-u_{i}\right) $ and $\widetilde{g}_{i}\left( x,t,\widetilde{u}%
_{i}\right) =-g_{i}\left( x,t,-u_{i}\right) $, respectively. These functions
are subject to the same conditions (\ref{A1bis}), (\ref{ga}). This yields
the desired estimate (\ref{llinf}). Finally, we may conclude that if $T$ is
sufficiently large, we can take $\tau =1$ in (\ref{llinf}), which also
immediately gives the first conclusion in the theorem. The proof is finished.

\subsection{Proof of Theorems \protect\ref{diss}, \protect\ref{diss2}}

In order to justify our computations for problem (\ref{s1}), (\ref{s3}), (%
\ref{s4}), (\ref{s5}), it is not clear how to use the scheme introduced at
the beginning of the section due to the nature of the boundary domain
(indeed, $\overline{\Gamma }_{1}\cap \overline{\Gamma }_{2}\neq \varnothing $
in that case, so we \emph{cannot} exploit maximal regularity theory to
construct smooth solutions unless $\Gamma _{2}\equiv \varnothing $).
However, the proof can be based on the application of a Galerkin
approximation scheme which is not standard due to the nature of the boundary
condition (\ref{s3}). We refer the reader to, e.g., \cite{CGGM}, for further
details, where a system of reaction-diffusion equations for the phase-field
equations with dynamic boundary conditions were considered (cf. also \cite%
{GW} in a degenerate case).

We begin the proof of Theorem \ref{diss}\ with a result for the eigenvalue
problem for so-called Wentzell Laplacian $\Delta _{W}$\ (see \cite[Appendix]%
{GalNS}). More precisely, let us consider the equation%
\begin{equation}
-\Delta \varphi =\Lambda \varphi \text{ in }\Omega ,  \label{sp1}
\end{equation}%
with a boundary condition that depends on the eigenvalue $\Lambda $
explicitly,%
\begin{equation}
\partial _{\mathbf{n}}\varphi =\Lambda \varphi \text{ on }\Gamma _{1},
\label{sp2}
\end{equation}%
such that%
\begin{equation}
\partial _{\mathbf{n}}\varphi +\varphi =0\text{ on }\Gamma _{2}.  \label{sp3}
\end{equation}%
(recall that $\Gamma _{2}$ is assumed to be a set of positive measure and
that (\ref{sp3}) holds where a Dirichlet boundary condition for $u=u_{i}$ is
satisfied on $\Gamma _{2}\times \mathbb{R}_{+}$). Such a function $\varphi $
will be called an eigenfunction associated with $\Lambda $ and the set of
all eigenvalues $\Lambda $ of (\ref{sp1})-(\ref{sp3}) will be denoted by $%
\Lambda _{j},$ $j\in \mathbb{N}$. Let $\varphi _{1}\in C^{2}\left( \overline{%
\Omega }\right) $ and $\Lambda _{1}$, denote the principal eigenfunction and
eigenvalue of (\ref{sp1})-(\ref{sp3}), respectively. We have the following.

\begin{proposition}
For the spectral problem (\ref{sp1})-(\ref{sp3}), $\Lambda _{1}>0$ is simple
and $\varphi _{1}>0$ in $\overline{\Omega }$.
\end{proposition}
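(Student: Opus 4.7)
The plan is to recast the problem variationally on a suitable Hilbert space built from bulk and boundary measures, and then to invoke the spectral theorem for self-adjoint operators with compact resolvent together with a Perron--Frobenius-type argument. Testing $-\Delta \varphi = \Lambda \varphi$ against $\psi \in H^1(\Omega)$, integrating by parts, and inserting (\ref{sp2})--(\ref{sp3}) leads to
\begin{equation*}
\mathfrak{a}(\varphi,\psi) := \int_\Omega \nabla \varphi \cdot \nabla \psi \, dx + \int_{\Gamma_2} \varphi \psi \, dS = \Lambda \left( \int_\Omega \varphi \psi \, dx + \int_{\Gamma_1} \varphi \psi \, dS \right).
\end{equation*}
This identifies the natural ambient space as $\mathcal{H} := L^2(\Omega) \oplus L^2(\Gamma_1)$, endowed with the inner product on the right-hand side, and the form domain as $V := H^1(\Omega)$, embedded in $\mathcal{H}$ via the trace on $\Gamma_1$. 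Rellich--Kondrachov together with the compactness of the trace map $H^1(\Omega) \to L^2(\Gamma_1)$ makes $V \hookrightarrow \mathcal{H}$ compact, while a standard Poincar\'{e}-type inequality gives coercivity of $\mathfrak{a}(\cdot,\cdot) + C \| \cdot \|_{\mathcal{H}}^2$ on $V$ for $C$ large. The spectral theorem then yields an $\mathcal{H}$-orthonormal basis of eigenfunctions and a discrete spectrum $0 \leq \Lambda_1 \leq \Lambda_2 \leq \cdots \to \infty$.

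The Rayleigh quotient
\begin{equation*}
\Lambda_1 = \inf_{0 \neq \varphi \in V} \frac{\mathfrak{a}(\varphi,\varphi)}{\|\varphi\|_{\mathcal{H}}^2}
\end{equation*}
is manifestly nonnegative. If one had $\Lambda_1 = 0$, any minimizer $\varphi_1$ would satisfy $\nabla \varphi_1 \equiv 0$ in $\Omega$ and $\varphi_1{}_{|\Gamma_2} \equiv 0$, forcing $\varphi_1$ to be a constant vanishing on the positive-measure set $\Gamma_2$, hence $\varphi_1 \equiv 0$, a contradiction; so $\Lambda_1 > 0$.

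For positivity and simplicity I would exploit the identities $\mathfrak{a}(|\varphi|,|\varphi|) = \mathfrak{a}(\varphi,\varphi)$ and $\| |\varphi| \|_{\mathcal{H}} = \|\varphi\|_{\mathcal{H}}$, so that $|\varphi_1|$ is also a minimizer and hence a $\Lambda_1$-eigenfunction; we may therefore assume $\varphi_1 \geq 0$. Elliptic regularity on the $C^2$-domain $\Omega$ gives $\varphi_1 \in C^2(\overline{\Omega})$, and the strong maximum principle applied to $-\Delta \varphi_1 = \Lambda_1 \varphi_1 \geq 0$ yields $\varphi_1 > 0$ in $\Omega$. If $\varphi_1(x_0) = 0$ at some $x_0 \in \Gamma$, Hopf's lemma forces $\partial_{\mathbf{n}} \varphi_1(x_0) < 0$; but both (\ref{sp2}) and (\ref{sp3}) imply $\partial_{\mathbf{n}} \varphi_1(x_0) = 0$ whenever $\varphi_1(x_0) = 0$, a contradiction. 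Thus $\varphi_1 > 0$ on all of $\overline{\Omega}$. Simplicity follows by a classical argument: any linearly independent second $\Lambda_1$-eigenfunction could be combined with $\varphi_1$ into one that is $\mathcal{H}$-orthogonal to $\varphi_1$, and the absolute-value argument applied to that combination would produce a nonzero nonnegative eigenfunction orthogonal to the strictly positive $\varphi_1$, which is impossible. The main technical point I expect is the careful self-adjoint realization of the Wentzell-type operator in the nonstandard mixed bulk/surface measure space $\mathcal{H}$ and the compactness of its resolvent; once these are in place, the rest of the proof is routine.
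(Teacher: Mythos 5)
Your proposal is correct and follows essentially the same route as the paper: a variational/Rayleigh-quotient characterization of $\Lambda_{1}$ in the mixed bulk--surface space, with $\Lambda_{1}>0$ hinging on a Poincar\'{e}-type inequality that uses the positive measure of $\Gamma_{2}$, and positivity plus simplicity of $\varphi_{1}$ via the maximum principle. The only difference is one of detail: the paper outsources the min-max principle and the simplicity argument to \cite{GalNS} and \cite{BF}, whereas you spell out the $|\varphi_{1}|$-trick, the Hopf lemma step at boundary zeros, and the orthogonality argument explicitly (and your identification of the pivot space as $L^{2}(\Omega)\oplus L^{2}(\Gamma_{1})$ is the precise weak formulation of \eqref{sp1}--\eqref{sp3}).
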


\begin{proof}
Using the standard characterization for the eigenvalues $\Lambda _{j}$ of $%
\Delta _{W}$ (see, e.g., \cite{GalNS}), we obtain that the following min-max
principle holds:%
\begin{equation}
\Lambda _{j}=\min_{\substack{ Y_{j}\subset H^{1}\left( \Omega \right) ,  \\ %
\dim Y_{j}=j}}\max_{0\neq \varphi \in Y_{j}}R_{W}\left( \varphi ,\varphi
\right) ,\text{ }j\in \mathbb{N}\text{,}  \label{minmax}
\end{equation}%
where the Rayleigh quotient $R_{W}$, for the (boundary perturbed) Wentzell
operators $\Delta _{W}$, is given by%
\begin{equation}
R_{W}\left( \varphi ,\varphi \right) :=\frac{\left\Vert \nabla \varphi
\right\Vert _{2}^{2}+\left\langle \varphi ,\varphi \right\rangle
_{L^{2}\left( \Gamma _{2}\right) }}{\left\Vert \varphi \right\Vert _{\mathbb{%
X}^{2}}^{2}},\text{ }0\neq \varphi \in H^{1}\left( \Omega \right) .
\label{rqw}
\end{equation}%
Exploiting a well-known Friedrichs-Poincare's inequality, we have $%
R_{W}\left( \varphi ,\varphi \right) \geq c_{W}\left\Vert \varphi
\right\Vert _{\mathbb{X}^{2}}^{2},$ for some positive constant $c_{W},$
which implies that $\Lambda _{j}>0$, for any $j\in \mathbb{N}$. By the
maximum principle, $\varphi _{1}$ is positive in $\overline{\Omega }$ since $%
\Gamma _{2}$ has positive surface measure. The fact that $\Lambda _{1}$ is
simple, follows again from the maximum principle (see, e.g., \cite{BF}).
\end{proof}

We are now ready to give the proof of Theorem \ref{diss}. Without loss of
generality, we can take $\delta _{i}=1,$ for all $i\in J_{m}$. We multiply (%
\ref{s1}) by $\left\vert u_{i}\right\vert ^{m_{i}-1}sgn\left( u_{i}\right)
\varphi _{1},$ and integrate over $\Omega $, for each $i\in \mathbb{N}_{m}$.
We obtain%
\begin{eqnarray}
&&\frac{1}{m_{i}}\frac{d}{dt}\int_{\Omega }\left\vert u_{i}\right\vert
^{m_{i}}\varphi _{1}dx+\left\langle f_{i}\left( x,t,\overrightarrow{u}%
\right) ,\left\vert u_{i}\right\vert ^{m_{i}-1}sgn\left( u_{i}\right)
\varphi _{1}\right\rangle _{L^{2}\left( \Omega \right) }  \label{p1} \\
&&-\int_{\Omega }div\left( a_{i}\left( u_{i}\right) \nabla u_{i}\right)
\left\vert u_{i}\right\vert ^{m_{i}-1}sgn\left( u_{i}\right) \varphi _{1}dx
\notag \\
&=&0.  \notag
\end{eqnarray}%
Similarly, we multiply (\ref{s3}) by $\left\vert u_{i}\right\vert
^{m_{i}-1}sgn\left( u_{i}\right) \varphi _{1}$ and integrate the relation
over $\Gamma $ (recall that (\ref{s5}) holds over $\Gamma _{2}$). We have%
\begin{align}
& \frac{1}{m_{i}}\frac{d}{dt}\int_{\Gamma _{1}}\left\vert u_{i}\right\vert
^{m_{i}}\varphi _{1}dS+\int_{\Gamma }a_{i}\left( u_{i}\right) \partial _{%
\mathbf{n}}u_{i}\left\vert u_{i}\right\vert ^{m_{i}-1}sgn\left( u_{i}\right)
\varphi _{1}dS  \label{p2} \\
& +\left\langle g_{i}\left( x,t,\overrightarrow{u}\right) ,\left\vert
u_{i}\right\vert ^{m_{i}-1}sgn\left( u_{i}\right) \varphi _{1}\right\rangle
_{L^{2}\left( \Gamma _{1}\right) }  \notag \\
& =0,  \notag
\end{align}%
for each $i\in \mathbb{N}_{m}$. Consider the following real-valued function%
\begin{equation}
\mathcal{E}\left( \overrightarrow{u}\left( x,t\right) \right)
=\sum\nolimits_{i\in \mathbb{N}_{m}}\frac{1}{m_{i}}\left\vert u_{i}\left(
x,t\right) \right\vert ^{m_{i}}.  \label{energy}
\end{equation}%
Integrating by parts in (\ref{p1}), then using (\ref{p2}), on account of the
following computation%
\begin{align*}
& \int_{\Omega }div\left( a_{i}\left( u_{i}\right) \nabla u_{i}\right)
\left\vert u_{i}\right\vert ^{m_{i}-1}sgn\left( u_{i}\right) \varphi _{1}dx
\\
& =-\left( m_{i}-1\right) \int_{\Omega }a_{i}\left( u_{i}\right) \left\vert
\nabla u_{i}\right\vert ^{2}\left\vert u_{i}\right\vert ^{m_{i}-2}\varphi
_{1}dx \\
& -\int_{\Omega }a_{i}\left( u_{i}\right) \left\vert u_{i}\right\vert
^{m_{i}-1}sgn\left( u_{i}\right) \nabla u_{i}\cdot \nabla \varphi _{1}dx \\
& +\int_{\Gamma }a_{i}\left( u_{i}\right) \partial _{\mathbf{n}%
}u_{i}\left\vert u_{i}\right\vert ^{m_{i}-1}sgn\left( u_{i}\right) \varphi
_{1}dS,
\end{align*}%
we deduce the following inequality%
\begin{eqnarray}
&&\partial _{t}\int_{\overline{\Omega }}\mathcal{E}\left( \overrightarrow{u}%
\left( t\right) \right) \varphi _{1}d\mu +\sum\nolimits_{i\in \mathbb{N}%
_{m}}\left( m_{i}-1\right) \int_{\Omega }a_{i}\left( u_{i}\right) \left\vert
\nabla u_{i}\right\vert ^{2}\left\vert u_{i}\right\vert ^{m_{i}-2}\varphi
_{1}dx  \label{p3} \\
&&+\sum\nolimits_{i\in \mathbb{N}_{m}}\int_{\Omega }a_{i}\left( u_{i}\right)
\left\vert u_{i}\right\vert ^{m_{i}-1}sgn\left( u_{i}\right) \nabla
u_{i}\cdot \nabla \varphi _{1}dx  \notag \\
&\leq &c\int_{\overline{\Omega }}\mathcal{E}\left( \overrightarrow{u}\left(
t\right) \right) \varphi _{1}d\mu +c.  \notag
\end{eqnarray}%
Here we have employed (\ref{A3}) to estimate the terms involving $%
f_{i},g_{i} $ in (\ref{p1})-(\ref{p2}). Let us now estimate the third
integral term on the left-hand side of (\ref{p3}). Exploiting the assumption
(\ref{A1}) on $a_{i}$, $i\in \mathbb{N}_{m}$, we deduce for $u=u_{i}$, $%
p=p_{i},$ $M=m_{i}$ that%
\begin{align}
\int_{\Omega }a\left( u\right) \left\vert u\right\vert ^{p-1}sgn\left(
u\right) \nabla u\cdot \nabla \varphi _{1}dx& \geq \int_{\Omega }\left\vert
u\right\vert ^{p+M-1}sgn\left( u\right) \nabla u\cdot \nabla \varphi _{1}dx
\label{p4} \\
& =\int_{\Omega }\nabla \overline{a}\left( u\right) \cdot \nabla \varphi
_{1}dx,  \notag
\end{align}%
where we have set%
\begin{equation*}
\overline{a}\left( u\right) :=\int_{0}^{\left\vert u\right\vert }a\left(
s\right) \left\vert s\right\vert ^{p-1}ds\geq c\left\vert u\right\vert
^{M+p}.
\end{equation*}%
Integrating by parts in (\ref{p4}) once more and noting that $\overline{a}%
\left( 0\right) =0$, we obtain%
\begin{align}
\int_{\Omega }\nabla \overline{a}\left( u\right) \cdot \nabla \varphi
_{1}dx& =\int_{\Gamma }\overline{a}\left( u\right) \partial _{\mathbf{n}%
}\varphi _{1}dS-\int_{\Omega }\overline{a}\left( u\right) \Delta \varphi
_{1}dx  \label{p5} \\
& =\int_{\Gamma _{1}}\overline{a}\left( u\right) \partial _{\mathbf{n}%
}\varphi _{1}dS+\int_{\Gamma _{2}}\overline{a}\left( u\right) \partial _{%
\mathbf{n}}\varphi _{1}dS  \notag \\
& -\int_{\Omega }\overline{a}\left( u\right) \Delta \varphi _{1}dx  \notag \\
& =\int_{\Gamma _{1}}\overline{a}\left( u\right) \Lambda _{1}\varphi
_{1}dS+\int_{\Omega }\overline{a}\left( u\right) \Lambda _{1}\varphi _{1}dx
\notag \\
& \geq \Lambda _{1}\int_{\overline{\Omega }}\left\vert u\right\vert
^{M+p}\varphi _{1}d\mu ,  \notag
\end{align}%
since $\left( \Lambda _{1},\varphi _{1}\right) $ satisfies (\ref{sp1})-(\ref%
{sp3}). Inserting the above estimates in (\ref{p3}), we get the following
inequality%
\begin{align}
& \partial _{t}\int_{\overline{\Omega }}\mathcal{E}\left( \overrightarrow{u}%
\left( t\right) \right) \varphi _{1}d\mu +\Lambda _{1}\sum\nolimits_{i\in
\mathbb{N}_{m}}\int_{\overline{\Omega }}\left\vert u_{i}\right\vert
^{m_{i}+p_{i}}\varphi _{1}d\mu  \label{p6} \\
& \leq c\int_{\overline{\Omega }}\mathcal{E}\left( \overrightarrow{u}\left(
t\right) \right) \varphi _{1}d\mu +c.  \notag
\end{align}%
Since all $p_{i}$'s are positive, we can absorb the integral term on the
right-hand side of (\ref{p6}), using the Young's inequality as follows:%
\begin{equation*}
\sum\nolimits_{i\in \mathbb{N}_{m}}\left\vert u_{i}\right\vert ^{m_{i}}\leq
\varepsilon \sum\nolimits_{i\in \mathbb{N}_{m}}\left\vert u_{i}\right\vert
^{m_{i}+p_{i}}+C_{\varepsilon },
\end{equation*}%
for a sufficiently small $\varepsilon \in \left( 0,\Lambda _{1}\right) $ and
some positive constant $C_{\varepsilon },$ independent of $u_{i},t.$
Moreover, setting $\nu =\min_{i\in \mathbb{N}_{m}}\left( m_{i}/p_{i}\right)
+1>1$, we immediatelly have from the above inequality, that%
\begin{equation}
\int_{\overline{\Omega }}\left( \mathcal{E}\left( \overrightarrow{u}\left(
t\right) \right) \right) ^{\nu }\varphi _{1}d\mu \leq c\sum\nolimits_{i\in
\mathbb{N}_{m}}\int_{\overline{\Omega }}\left\vert u_{i}\right\vert
^{m_{i}+p_{i}}\varphi _{1}d\mu +c,  \label{p7}
\end{equation}%
for some positive constant $c$ independent of \thinspace $\overrightarrow{u}$%
, $t$ and initial data. Using (\ref{p7}), we see that (\ref{p6}) yields the
following estimate%
\begin{equation}
\partial _{t}\int_{\overline{\Omega }}\mathcal{E}\left( \overrightarrow{u}%
\left( t\right) \right) \varphi _{1}d\mu +c\int_{\overline{\Omega }}\left(
\mathcal{E}\left( \overrightarrow{u}\left( t\right) \right) \right) ^{\nu
}\varphi _{1}d\mu \leq c,  \label{p8}
\end{equation}%
by an appropriate choice of $\varepsilon \leq \Lambda _{1}/2.$ By
normalizing the eigenfunction $\varphi _{1}$ in (\ref{p8}) such that $%
\left\Vert \varphi _{1}\right\Vert _{L^{1}\left( \overline{\Omega },d\mu
\right) }=1$, on account of Jensen's inequality, it follows that%
\begin{equation*}
\left( \int_{\overline{\Omega }}\mathcal{E}\left( \overrightarrow{u}\left(
t\right) \right) \varphi _{1}d\mu \right) ^{\nu }\leq \int_{\overline{\Omega
}}\left( \mathcal{E}\left( \overrightarrow{u}\left( t\right) \right) \right)
^{\nu }\varphi _{1}d\mu ,
\end{equation*}%
which gives the following estimate:%
\begin{equation}
\partial _{t}Y\left( t\right) +c_{1}\left( Y\left( t\right) \right) ^{\nu
}\leq c_{2},  \label{p9}
\end{equation}%
for some positive constants $c_{1},c_{2}$, where we have set%
\begin{equation*}
Y\left( t\right) :=\int_{\overline{\Omega }}\mathcal{E}\left(
\overrightarrow{u}\left( t\right) \right) \varphi _{1}d\mu .
\end{equation*}%
We can now use the Gronwall's inequality (see, e.g., \cite[Chapter III,
Lemma 5.1]{T}), applied to (\ref{p9}) to deduce that%
\begin{equation}
Y\left( t\right) \leq \left( \frac{c_{2}}{c_{1}}\right) ^{\nu }+\left(
c_{1}\left( \nu -1\right) t\right) ^{-\frac{1}{\nu -1}},\text{ }\forall t>0,
\label{p10}
\end{equation}%
which yields the desired claim. The proof of the theorem is complete.

\begin{remark}
\label{remi}In the case when $\overrightarrow{u}_{0}\in \mathcal{X}^{%
\overrightarrow{r}}$, $Y\left( 0\right) =\lim_{t\rightarrow 0^{+}}Y\left(
t\right) $ is finite, so a similar argument to \cite[Chapter III, Lemma 5.1]%
{T} gives%
\begin{equation*}
Y\left( t\right) \leq \max \left\{ Y\left( 0\right) ,\left( \frac{c_{2}}{%
c_{1}}\right) ^{\nu }\right\} ,\text{ }\forall t\geq 0.
\end{equation*}%
Thus, the second assertion in Theorem \ref{diss} also follows. We also note
that the above argument relies entirely on the fact that the boundary $%
\Gamma _{2}$ has positive measure and this gives $\Lambda _{1}>0$. The proof
fails to work if for instance, $\Gamma \equiv \Gamma _{1}$ (i.e., when $%
\Gamma _{2}=\varnothing $). We shall require different arguments for this
case (see below). Finally, if at least one $p_{i}=0$, for some $i\in \mathbb{%
N}_{m}$, the above argument can still be used to derive the following bound%
\begin{equation*}
\sup_{t\geq 0}\left\Vert \overrightarrow{u}\left( t\right) \right\Vert _{%
\mathcal{X}^{\overrightarrow{r}}}\leq Q\left( \left\Vert \overrightarrow{u}%
_{0}\right\Vert _{\mathcal{X}^{\overrightarrow{r}}}e^{ct}\right) .
\end{equation*}%
Indeed, this follows from a standard application of Gronwall's inequality to
(\ref{p6}).
\end{remark}

We now continue with the proof of Theorem \ref{diss2}. As in the proof of
Theorem \ref{diss}, we multiply (\ref{s1}) by $\left\vert u_{i}\right\vert
^{m_{i}-1}sgn\left( u_{i}\right) ,$ and integrate over $\Omega $, for each $%
i\in \mathbb{N}_{m}$. Then, we multiply both equations (\ref{s3}) and (\ref%
{s2}) by $\left\vert u_{i}\right\vert ^{m_{i}-1}sgn\left( u_{i}\right) $ and
integrate the relations that we obtain over $\Gamma $. Analogously to (\ref%
{p1})-(\ref{p2}) and arguing in a standard way as in (\ref{eqn3tris}), we
deduce the following identity:
\begin{align}
& \frac{d}{dt}\sum\nolimits_{i\in \mathbb{N}_{m}}\frac{1}{m_{i}}\left(
\int_{\Omega }\left\vert u_{i}\right\vert ^{m_{i}}dx+\int_{\Gamma
}\left\vert u_{i}\right\vert ^{m_{i}}dS\right)  \label{ba} \\
& +\sum\nolimits_{i\in \mathbb{N}_{m}}\left\langle f_{i}\left( x,t,%
\overrightarrow{u}\right) ,\left\vert u_{i}\right\vert ^{m_{i}-1}sgn\left(
u_{i}\right) \right\rangle _{L^{2}\left( \Omega \right) }  \notag \\
& +\sum\nolimits_{i\in \mathbb{N}_{m}}\left\langle g_{i}\left( x,t,%
\overrightarrow{y}\right) ,\left\vert u_{i}\right\vert ^{m_{i}-1}sgn\left(
u_{i}\right) \right\rangle _{L^{2}\left( \Gamma \right) }  \notag \\
& =-\sum\nolimits_{i\in \mathbb{N}_{m}}\left( m_{i}-1\right) \int_{\Omega
}a_{i}\left( x,t,\overrightarrow{u}\right) \left\vert \nabla
u_{i}\right\vert ^{2}\left\vert u_{i}\right\vert ^{m_{i}-2}dx.  \notag
\end{align}%
By assumption (\ref{A1}), we can estimate the term on the right-hand side of
(\ref{ba}) as follows:%
\begin{align}
\int_{\Omega }a_{i}\left( t,\overrightarrow{u}\right) \left\vert \nabla
u_{i}\right\vert ^{2}\left\vert u_{i}\right\vert ^{m_{i}-2}dx& \geq \alpha
_{i}\int_{\Omega }\left\vert \nabla u_{i}\right\vert ^{2}\left\vert
u_{i}\right\vert ^{m_{i}+p_{i}-2}dx  \label{ba2} \\
& =\alpha _{i}\left( \frac{2}{m_{i}+p_{i}}\right) ^{2}\int_{\Omega
}\left\vert \nabla \left\vert u\right\vert ^{\frac{m_{i}+p_{i}}{2}%
}\right\vert ^{2}dx.  \notag
\end{align}%
To estimate the nonlinear terms on the left-hand side of (\ref{ba}), we may
exploit (\ref{A3bis}). On account of (\ref{ba2}), we have
\begin{align}
& \frac{d}{dt}\sum\nolimits_{i\in \mathbb{N}_{m}}\frac{1}{m_{i}}\left(
\int_{\Omega }\left\vert u_{i}\right\vert ^{m_{i}}dx+\int_{\Gamma
}\left\vert u_{i}\right\vert ^{m_{i}}dS\right)  \label{ba3} \\
& +\sum\nolimits_{i\in \mathbb{N}_{m}}\alpha _{i}\left( \frac{2}{m_{i}+p_{i}}%
\right) ^{2}\left( m_{i}-1\right) \int_{\Omega }\left\vert \nabla \left\vert
u\right\vert ^{\frac{m_{i}+p_{i}}{2}}\right\vert ^{2}dx  \notag \\
& -\sum\nolimits_{i\in \mathbb{N}_{m}}\left( C_{f_{i}}\int_{\Omega
}\left\vert u_{i}\right\vert ^{m_{i}+p_{i}}dx+C_{g_{i}}\int_{\Gamma
}\left\vert u_{i}\right\vert ^{m_{i}+p_{i}}dS\right)  \notag \\
& \leq c.  \notag
\end{align}%
By assumption (\ref{eigen}), it follows that, for all $i\in \mathbb{N}_{m}$,
it holds%
\begin{align}
& a_{i}\left\Vert \nabla \varphi _{i}\right\Vert _{L^{2}\left( \Omega
\right) }^{2}-C_{f_{i}}\left\Vert \varphi _{i}\right\Vert _{L^{2}\left(
\Omega \right) }^{2}-C_{g_{i}}\left\Vert \varphi _{i}\right\Vert
_{L^{2}\left( \Gamma \right) }^{2}  \label{ba4} \\
& \geq \Lambda _{1,i}\left( \left\Vert \varphi _{i}\right\Vert _{L^{2}\left(
\Omega \right) }^{2}+\left\Vert \varphi _{i}\right\Vert _{L^{2}\left( \Gamma
\right) }^{2}\right) ,  \notag
\end{align}%
for all $\varphi _{i}\in H^{1}\left( \Omega \right) $. Thus, by choosing $%
\varphi _{i}=\left\vert u_{i}\right\vert ^{\left( m_{i}+p_{i}\right) /2}$ in
(\ref{ba4}), and recalling (\ref{energy}), from (\ref{ba3}), we obtain the
following inequality:%
\begin{align}
& \partial _{t}\int_{\overline{\Omega }}\mathcal{E}\left( \overrightarrow{u}%
\left( t\right) \right) d\mu +\Lambda _{1}\sum\nolimits_{i\in \mathbb{N}%
_{m}}\left( \int_{\Omega }\left\vert u_{i}\right\vert
^{m_{i}+p_{i}}dx+C_{g_{i}}\int_{\Gamma }\left\vert u_{i}\right\vert
^{m_{i}+p_{i}}dS\right)  \label{ba5} \\
& \leq c.  \notag
\end{align}%
Let us assume that $p_{i}>0$, for all $i\in \mathbb{N}_{m}$. Arguing now as
in the proof of Theorem \ref{diss} (see (\ref{p6})-(\ref{p9})), it is not
hard to see that we arrive at the following inequality%
\begin{equation}
\partial _{t}\int_{\overline{\Omega }}\mathcal{E}\left( \overrightarrow{u}%
\left( t\right) \right) d\mu +c\left( \int_{\overline{\Omega }}\mathcal{E}%
\left( \overrightarrow{u}\left( t\right) \right) d\mu \right) ^{v}\leq c,
\label{ba7}
\end{equation}%
where $v=\min_{i\in \mathbb{N}_{m}}\left( m_{i}/p_{i}\right) +1>1.$ Thus, we
can use the Gronwall's inequality as before (see (\ref{p10})) to derive the
estimate%
\begin{equation}
\int_{\overline{\Omega }}\mathcal{E}\left( \overrightarrow{u}\left( t\right)
\right) d\mu \leq c\left( 1+t^{-\frac{1}{\nu -1}}\right) ,\text{ }\forall
t>0.  \label{ba7b}
\end{equation}%
Hence, the first claim of the theorem follows from (\ref{ba7b}). On the
other hand, if at least one $p_{i}=0$ for some $i\in \mathbb{N}_{m},$ we
obtain the following analogue of (\ref{ba7}):%
\begin{equation}
\partial _{t}\int_{\overline{\Omega }}\mathcal{E}\left( \overrightarrow{u}%
\left( t\right) \right) d\mu +c\int_{\overline{\Omega }}\mathcal{E}\left(
\overrightarrow{u}\left( t\right) \right) d\mu \leq c,  \label{ba8}
\end{equation}%
which yields the second claim of the theorem once more on account of
Gronwall's inequality. In particular, there exists a positive function $Q$,
independent of initial data and time, such that%
\begin{equation}
\sup_{t\geq 0}\left\Vert \overrightarrow{u}\left( t\right) \right\Vert _{%
\mathcal{X}^{\overrightarrow{r}}}\leq Q\left( \left\Vert \overrightarrow{u}%
_{0}\right\Vert _{\mathcal{X}^{\overrightarrow{r}}}\right) e^{-c_{0}t}+C_{0},
\label{d3}
\end{equation}%
for some positive constants $c_{0},C_{0}$ independent of initial data and
time. The proof of Theorem \ref{diss2} is now complete.

\section{Appendix}

We will consider a more general problem than (\ref{1.11bb})-(\ref{1.12bb})
by taking $f_{1}\left( s\right) =f\left( s\right) -\lambda s$, $g_{1}\left(
s\right) =g\left( s\right) -\gamma s,$ provided that $\lambda ,\gamma >0$
are sufficiently large, and%
\begin{equation*}
f\left( s\right) \geq -c_{f}\text{, }g\left( s\right) \geq -c_{g}\text{, }%
\forall s\in \mathbb{R}\text{,}
\end{equation*}%
for some positive constants $c_{f},c_{g}.$ If $f\left( s\right) \sim
C_{f}\left\vert s\right\vert ^{p}s$ and $g\left( s\right) \sim
C_{g}\left\vert s\right\vert ^{q}s$, as $\left\vert s\right\vert \rightarrow
\infty $, for $p,q>1$ and some positive constants $C_{f},C_{g}$, it is
well-known \cite{G0} that problem%
\begin{equation}
\partial _{t}u-\nu \Delta u+f\left( u\right) -\lambda u=0,\text{ in }\Omega
\times (0,+\infty ),  \label{a1}
\end{equation}%
subject to the dynamic condition%
\begin{equation}
\partial _{t}u+\nu \partial _{\mathbf{n}}u+g\left( u\right) -\gamma u=0,%
\text{ on }\Gamma \times \left( 0,\infty \right) ,  \label{a2}
\end{equation}%
and initial condition%
\begin{equation}
u_{\mid t=0}=u_{0}\text{ in }\overline{\Omega },  \label{a3}
\end{equation}%
possesses a finite dimensional global attractor $\mathcal{A}_{\text{dyn}}$
which is bounded in $H^{2}\left( \Omega \right) \cap \mathbb{X}^{\infty }$.

Let $u_{\ast }$ be a constant (hyperbolic)\ equilbrium for the system (\ref%
{a1})-(\ref{a2}) (see \cite[Section 3]{GalNS}). We linearize (\ref{a1})-(\ref%
{a2}) around $u_{\ast }.$ We obtain%
\begin{equation}
\partial _{t}u=\nu \Delta u-\left( f^{^{\prime }}\left( u_{\ast }\right)
-\lambda \right) u,\text{ in }\Omega \times (0,+\infty ),  \label{la1}
\end{equation}%
subject to the dynamic condition%
\begin{equation}
\partial _{t}u=-\nu \partial _{\mathbf{n}}u-\left( g^{^{\prime }}\left(
u_{\ast }\right) -\gamma \right) u,\text{ on }\Gamma \times \left( 0,\infty
\right) .  \label{la2}
\end{equation}%
We aim to better understand the nature of the (invariant)\ unstable
eigenspace $E^{u}$ which corresponds to the following (matrix) operator%
\begin{equation*}
\mathbf{L}\left( u_{\ast }\right) W=\binom{\nu \Delta w-f^{^{\prime }}\left(
u_{\ast }\right) w+\lambda w}{-\nu \partial _{\mathbf{n}}w-g^{^{\prime
}}\left( u_{\ast }\right) w+\gamma w},\text{ }W=\left(
\begin{array}{c}
w \\
w_{\mid \Gamma }%
\end{array}%
\right) ,
\end{equation*}%
with $\sigma \left( \mathbf{L}\left( u_{\ast }\right) \right) \subset
\left\{ \zeta :\zeta >0\right\} $. We note that $\left( \mathbf{L}\left(
u_{\ast }\right) ,\text{dom}\left( \mathbf{L}\left( u_{\ast }\right) \right)
\right) $\ is self-adjoint on $\mathbb{X}^{2}$\ with spectrum contained in $%
\left( -\infty ,C_{\lambda ,\gamma }\right] ,$ for some $C_{\lambda ,\gamma
}>0$ which depends only on $f,g,\lambda $ and $\gamma $ (see, e.g., \cite%
{GalNS} and references therein). Next, let $\left\{ \varphi _{j}\left(
x\right) \right\} _{j\in \mathbb{N}_{0}}$ be an orthonormal basis in $%
\mathbb{X}^{2}$ consisting of eigenfunctions of the (positive) Wentzell
Laplacian $\Delta _{W}$ operator (see \cite[Theorem 5.1]{GalNS})%
\begin{equation}
\Delta _{W}\varphi _{j}=\Lambda _{j}\varphi _{j},\text{ }j\in \mathbb{N}_{0},%
\text{ }\varphi _{j}\in \text{dom}\left( \Delta _{W}\right) \cap C\left(
\overline{\Omega }\right)  \label{evseq}
\end{equation}%
such that%
\begin{equation*}
0=\Lambda _{0}<\Lambda _{1}\leq \Lambda _{2}\leq ...\leq \Lambda _{,j}\leq
\Lambda _{j+1}\leq ....\rightarrow +\infty .
\end{equation*}%
We shall seek for eigenvectors $W_{j}=\binom{w_{j}}{w_{j\mid \Gamma }}\in
\mathbb{X}^{2}$, of the form $w_{j}\left( x\right) =\varphi _{j}\left(
x\right) p_{j},$ $p_{j}\in \mathbb{R}$, satisfying equation%
\begin{equation}
\mathbf{L}\left( u_{\ast }\right) W_{j}=\zeta _{j}W_{j},\text{ }W_{j}\in
\text{dom}\left( \mathbf{L}\left( u_{\ast }\right) \right) :=\text{dom}%
\left( \Delta _{W}\right) .  \label{eee}
\end{equation}%
Note that for $W_{j}\in dom\left( \mathbf{L}\left( u_{\ast }\right) \right)
\subset H^{1}\left( \Omega \right) \times L^{2}\left( \Gamma \right) ,$ the
trace of $w_{j}$ makes sense as an element of $H^{1/2}\left( \Gamma \right) $%
. Substituting such $w_{j}$ into (\ref{eee}), taking into account (\ref%
{evseq}) and the fact that%
\begin{equation*}
\mathbf{L}\left( u_{\ast }\right) W_{j}=-\nu \Delta _{W}W_{j}+\Pi _{\lambda
,\gamma }W_{j},\text{ }\Pi _{\lambda ,\gamma }W_{j}:=\binom{(-f^{^{\prime
}}\left( u_{\ast }\right) +\lambda )w_{j}}{(-g^{^{\prime }}\left( u_{\ast
}\right) +\gamma )w_{j\mid \Gamma }},
\end{equation*}%
we obtain the equation%
\begin{equation}
\left( -\nu \Lambda _{j}I+\Pi _{\lambda ,\gamma }\right) p_{j}=\zeta
_{j}p_{j},\text{ }\Pi _{\lambda ,\gamma }=\left(
\begin{array}{cc}
-f^{^{\prime }}\left( u_{\ast }\right) +\lambda & 0 \\
0 & -g^{^{\prime }}\left( u_{\ast }\right) +\gamma%
\end{array}%
\right) .  \label{pj}
\end{equation}%
A nonzero $p_{j}$ exists if $\zeta =\zeta _{j}$ is a root of the equation%
\begin{equation}
\det \left( -\nu \Lambda _{j}I+\Pi _{\lambda ,\gamma }-\zeta I\right) =0,%
\text{ }\zeta >0.  \label{dett}
\end{equation}%
When $\nu =0,$ this equation has at least one root $\zeta >0$ provided that
at least one of $\lambda $ and $\gamma $ is sufficiently large, i.e., either
$\lambda >f^{^{\prime }}\left( u_{\ast }\right) $ or $\gamma >g^{^{\prime
}}\left( u_{\ast }\right) $ (in fact the roots are $\zeta =\lambda
-f^{^{\prime }}\left( u_{\ast }\right) $ and $\zeta =\gamma -g^{^{\prime
}}\left( u_{\ast }\right) $, respectively). Therefore, there exists $\delta
>0$ such that when $\nu \Lambda _{j}<\delta ,$ the equation (\ref{dett}) has
a root $\zeta _{j}\left( \mathbf{L}\right) =\zeta _{j}\left( \nu \right) $
with $\zeta _{j}>0$. Therefore, to any such root $\zeta _{j}$, we can assign
a nontrivial $p_{j},$ which is a solution of (\ref{pj}), and thus an
eigenvector $W_{j}$. Let us now compute how many $j$'s satisfy the
inequality $\nu \Lambda _{j}<\delta $. When $N\geq 3$, the asymptotic
behavior of $\Lambda _{j}$ is
\begin{equation}
\Lambda _{j}\sim C_{S}\left( \Gamma \right) j^{1/\left( N-1\right) }\text{
as }j\rightarrow \infty  \label{a5}
\end{equation}%
(see \cite[Theorem 5.4]{GalNS}). The inequality $\nu \Lambda _{j}<\delta $
certainly holds when%
\begin{equation}
1\leq j\leq C_{\lambda ,\gamma }\delta ^{n-1}\left( C_{S}\left( \Gamma
\right) \nu \right) ^{1-N}=C_{\lambda ,\gamma }^{^{\prime }}\left\vert
\Gamma \right\vert \left( \frac{1}{\nu }\right) ^{N-1},\text{ for }N\geq 3,
\label{number}
\end{equation}%
where the positive constants $C_{\lambda ,\gamma },C_{\lambda ,\gamma
}^{^{\prime }}$ depend only on $\lambda ,\gamma $ and $N$.

\begin{remark}
Note that the number of unstable mode solutions to (\ref{la1})-(\ref{la2})
obeys the same relation (\ref{number}) even when $f\equiv 0$ and $\lambda =0$
in (\ref{a1}) (i.e., the dynamics of $u$ inside the bulk $\Omega $ is
strictly linear). Finally, we note that both $C_{\lambda ,\gamma
},C_{\lambda ,\gamma }^{^{\prime }}\rightarrow +\infty $ if either $\gamma
\rightarrow +\infty $ or $\lambda \rightarrow +\infty $ (cf. also \cite[%
Section 3]{GalNS}). In this case the instability index of $u_{\ast }$ is%
\begin{equation*}
N_{+}\left( u_{\ast }\right) \sim C_{\lambda ,\gamma }^{^{\prime
}}\left\vert \Gamma \right\vert \left( \frac{1}{\nu }\right) ^{N-1},\text{ }%
N\geq 3.
\end{equation*}
\end{remark}

\end{document}